\documentclass{article}%
\usepackage{amsmath}%
\setcounter{MaxMatrixCols}{30}%
\usepackage{amsfonts}%
\usepackage{amssymb}%
\usepackage{graphicx}
\usepackage{hyperref}
\usepackage{moresize}

\usepackage{epstopdf} 
%TCIDATA{OutputFilter=latex2.dll}
%TCIDATA{Version=5.00.0.2606}
%TCIDATA{CSTFile=40 LaTeX article.cst}
%TCIDATA{Created=Tuesday, November 26, 2013 20:56:07}
%TCIDATA{LastRevised=Tuesday, November 26, 2013 23:13:26}
%TCIDATA{<META NAME="GraphicsSave" CONTENT="32">}
%TCIDATA{<META NAME="SaveForMode" CONTENT="1">}
%TCIDATA{BibliographyScheme=Manual}
%TCIDATA{<META NAME="DocumentShell" CONTENT="Standard LaTeX\Blank - Standard LaTeX Article">}
\newtheorem{theorem}{Theorem}

\newtheorem{definition}[theorem]{Definition}
\newtheorem{example}[theorem]{Example}

\newtheorem{lemma}[theorem]{Lemma}

\newtheorem{remark}[theorem]{Remark}

\newenvironment{proof}[1][Proof]{\noindent\textbf{#1.} }{\ \rule{0.5em}{0.5em}}

\title{Geometric Proof of Strong Stable/Unstable Manifolds, with Application to the Restricted Three Body Problem}

\author{Maciej J. Capi\'nski \thanks{ Faculty of Applied Mathematics, AGH University of Science and Technology, al. Mickiewicza 30, 30-059
Krak\'ow,  Poland ({\tt mcapinsk@agh.edu.pl})  supported by Polish Nacional Science Center Grants NCN 2012/05/B/ST1/00355 and N201 543238, together with the United States NSF Grant DMS1162544} \and 
Anna Wasieczko \thanks{Faculty of Applied Mathematics, AGH University of Science and Technology, al. Mickiewicza 30, 30-059
Krak\'ow,  Poland ({\tt wasieczk@agh.edu.pl})}}
\begin{document}
\maketitle
\begin{abstract}
We present a method for establishing strong stable/unstable manifolds of fixed points for maps and ODEs. The method is based on cone conditions, suitably formulated to allow for application in computer assisted proofs. In the case of ODEs, assumptions follow from estimates on the vector field, and it is not necessary to integrate the system.
We apply our method to the restricted three body problem and show that for a given choice of the mass parameter, there exists a homoclinic orbit along matching strong stable/unstable manifolds of one of the libration points. 
\end{abstract}
%TCIDATA{OutputFilter=latex2.dll}
%TCIDATA{Version=5.00.0.2606}
%TCIDATA{LaTeXparent=0,0,online-edit.tex}

\section{Introduction}

In this paper we give a geometric method for establishing strong
stable/unstable invariant manifolds of fixed points. The method is based on a
graph transform type approach. Its assumptions are founded on suitably defined
cone conditions, which can be verified using rigorous (interval arithmetic
based), computer--assisted numerics.

Our approach is in a similar spirit to a number of previous results. The
papers \cite{GZ1,GZ2} by Gidea and Zgliczy\'{n}ski introduced a topological
tool referred to as \textquotedblleft covering relations\textquotedblright\ or
\textquotedblleft correctly aligned windows\textquotedblright. The tool can be
applied to obtain computer assisted proofs of symbolic dynamics in dynamical
systems. A paper \cite{Z} by Zgliczy\'{n}ski extends the method by adding
suitable cone conditions. With such additional assumptions one can establish
existence of hyperbolic fixed points and their associated stable and unstable
manifolds. The method has also been adapted by Zgliczy\'{n}ski, Sim\'{o} and
Capi\'{n}ski for proofs of normally hyperbolic invariant manifolds
\cite{C,CS,CZ-cc}. The above methods have been used and applied to a number of
systems including the restricted three body problem \cite{C2, CR,
Zgliczyn-Wilczak, Zgliczyn-Wilczak2} rotating H\'{e}non map \cite{C, CZ-cc},
driven logistic map \cite{CS}, forced damped pendulum \cite{WZ-pendulum}, and
proofs of slow manifolds \cite{GJM}. All these results rely on
suitable definitions of covering relations and cone conditions. The result
presented in this paper deals with fixed points, and is closely related to
\cite{Z}. The main difference is that our result can be used to establish
strong (un)stable manifolds, which could be submanifolds of the full
(un)stable manifold. Our method can also be applied to saddle--center fixed
points, which is not possible using \cite{Z}, since it relies on
hyperbolicity. Finally, our method does not rely on covering relations, which
reduces the number of assumptions by half and simplifies their verification.

There are a number of alternative approaches for computer assisted proofs of
invariant manifolds. These involve solving an appropriate fixed point equation
in a functional setting. Amongst these methods it is notable to mention the
work of Cabre, de la Llave, and Fontich \cite{Llave1,Llave2,Llave}. Our approach is different. It follows from a
topological argument performed in the state space of the system, instead of
considering the problem in a functional setting. The assumptions of our
theorem are simpler to verify, but at the cost of obtaining less accurate
bounds on the manifold enclosure.

As an example of an application of our method we consider the planar circular
restricted three body problem. We use the method to establish a rigorous
enclosure of an unstable manifold of a libration fixed point of the problem.
Using continuity based arguments, we also prove that the fixed point has a
homoclinic orbit, for a suitably chosen parameter of the system. The example
considered by us has first appeared in the work by Llibre, Martinez and
Sim\'{o} \cite{Simo}, where existence of such homoclinic connections has been
demonstrated numerically. We validate their results using rigorous, interval
based, computer assisted numerics.

To the best of our knowledge, our result is amongst the first computer
assisted proofs of nontransversal homoclinic orbits for ODEs. The only other
result known to us is the work of Szczelina and Zgliczy\'nski \cite{SZ}, where
a homoclinic orbit is proved for a two dimensional ODE. We note that the
considered by us homoclinic connection in the restricted three body problem
has not been proved up till this point. The only proof is the result of
Llibre, Martinez and Sim\'{o} \cite{Simo}, where an analytic argument is given
for a sufficiently small mass parameter. Their method can not be applied though
for a concrete given parameter, which is what we do in this paper.

Establishing of homoclinic connections between invariant objects can be used
in the study of stability of a system. Combined with Melnikov type arguments,
these can be used in proofs of Arnold diffusion \cite{Arnold} type dynamics. A
broad selection of papers has used this approach, including the work of
Delshams, Huguet, de la Llave, Seara or Treschev
\cite{DH1,DH2,DLS1,DLS2,T1,T2} amongst many others. Such approach has
also been applied in \cite{CZ}, in the setting of the planar elliptic
restricted three body problem. It used the homoclinic connections from
\cite{Simo} for the Melnikov method. The result though was not fully rigorous,
and relied on numerical computation of Melnikov integrals. The rigorous
enclosure of the homoclinic orbit established in this paper can be a starting
point for a rigorous validation of this computation. This would lead to a
proof of Arnold diffusion type dynamics in the elliptic restricted three body
problem. We plan to perform such validation in forthcoming work.

The paper is organized as follows. Section \ref{sec:prelim} contains
preliminaries. In section \ref{sec:statement} we state our results. In section
\ref{sec:cones-discs} we present auxiliary results concerning cone conditions,
which are then used in the proofs of our main results in section
\ref{sec:construction}. Our results are written for maps. In section
\ref{sec:flows} we show how they can be applied for ODEs. Section
\ref{sec:3bp-application} contains an application of our method, and contains
a proof of existence of a homoclinic orbit to the libration point $L_{1}$ in
the restricted three body problem. Sections \ref{sec:closing-remarks},
\ref{sec:acknowledgements} and \ref{sec:appendix} contain closing remarks,
acknowledgements and the appendix, respectively.

%TCIDATA{OutputFilter=latex2.dll}
%TCIDATA{Version=5.00.0.2606}
%TCIDATA{LaTeXparent=0,0,online-edit.tex}

\section{Preliminaries\label{sec:prelim}}

\subsection{Notations}

Throughout the paper, all norms that appear are standard Euclidean norms. We
use a notation $B(x,r)$ to denote a ball of radius $r$ centered at $x$. If we
want to emphasize that a ball is in $\mathbb{R}^{k}$, then we add a subscript
and write $B_{k}(x,r)$. We use a short hand notation $B_{k}=B_{k}(0,1)$ for a
unit ball in $\mathbb{R}^{k}$ centered at zero. For a set $A\subset
\mathbb{R}^{k}$ we use $\overline{A}$ to denote its closure and $\partial A$
for its boundary. For a point $p=\left(  x,y\right)  $ we use a notation
$\pi_{x}p$ and $\pi_{y}p$ to denote projections onto $x$ and $y$ coordinates, respectively.

\subsection{Computer assisted proofs}

Most computations performed on a computer are burdened with error. Even very
simple operations on real numbers (such as adding, multiplying or dividing)
can result in round off errors. To make computer assisted computations fully
rigorous, one can employ interval arithmetic, where instead of real numbers
one deals with intervals. Any operation is made rigorous by appropriate
rounding, which ensures an enclosure of the true result.

Interval arithmetic can also be used to treat basic functions (such as $\sin$,
$\cos$ or exponent). It can be extended to perform linear algebra on interval
vectors and interval matrices. One can thus design algorithms which give
rigorous enclosures for multiplying matrices, inverting a matrix, computing
eigenvectors or solving linear equations.

The interval arithmetic approach can also be extended to treat functions
$f:\mathbb{R}^{n}\to\mathbb{R}^{m}$. One can implement algorithms which
compute interval enclosures for images of the function $f$, for its derivative
and for higher order derivatives.

The interval arithmetic approach can also be used for the integration of ODEs.
One can implement interval arithmetic based integrators, which allow for the
computation of enclosures of the images of points along of a flow of an ODE.
One can extend such integrators to include the computation of high order
derivatives of a time shift map along the flow, or even to compute high order
derivatives for Poincar\'e maps \cite{Cn-Lohner}.

All above mentioned tasks can be performed using a single \texttt{C++} library
``Computer Assisted Proofs in Dynamics" (CAPD for short). The package is
freely available at:\smallskip

\href{http://capd.ii.uj.edu.pl}{http://capd.ii.uj.edu.pl} \smallskip

\noindent All the computer assisted proofs from this paper have been performed
using CAPD.

\subsection{Interval Newton Method}

Let $X$ be a subset of $\mathbb{R}^{n}$. We shall denote by $[X]$ an interval
enclosure of the set $X$, that is, a set
\[
[X]=\Pi_{i=1}^{n} [a_{i},b_{i}]\subset\mathbb{R}^{n},
\]
such that
\[
X\subset[X].
\]

Let $f:\mathbb{R}^{n}\to\mathbb{R}^{n}$ be a $C^{1}$ function and
$U\subset\mathbb{R}^{n}$. We shall denote by $[Df(U)]$ the interval enclosure
of a Jacobian matrix on the set $U$. This means that $[Df(U)]$ is an interval
matrix defined as
\[
\lbrack Df(U)]=\left\{  A\in\mathbb{R}^{n\times n}|A_{ij}\in\left[  \inf_{x\in
U}\frac{df_{i}}{dx_{j}}(x),\sup_{x\in U}\frac{df_{i}}{dx_{j}}(x)\right]
\text{ for all }i,j=1,\ldots,n\text{ }\right\}  .
\]

\begin{theorem}
\cite{Al} (\textbf{Interval Newton method}) \label{th:interval-Newton} Let
$f:\mathbb{R}^{n}\rightarrow\mathbb{R}^{n}$ be a $C^{1}$ function and
$X=\Pi_{i=1}^{n}[a_{i},b_{i}]$ with $a_{i}<b_{i}$. If $[Df(X)]$ is invertible
and there exists an $x_{0}$ in $X$ such that%
\[
N(x_{0},X):=x_{0}-\left[  Df(X)\right]  ^{-1}f(x_{0})\subset X,
\]
then there exists a unique point $x^{\ast}\in X$ such that $f(x^{\ast})=0.$
\end{theorem}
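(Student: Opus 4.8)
The plan is to prove the interval Newton theorem via a fixed-point argument for the Newton operator, combined with the Brouwer fixed point theorem for existence and a contraction-type estimate for uniqueness. Write $N(x) := x - [Df(X)]^{-1} f(x)$, where $[Df(X)]^{-1}$ denotes a fixed invertible matrix $C$ chosen so that $C$ is the inverse of some representative of the interval matrix $[Df(X)]$ (in practice one uses the inverse of the midpoint matrix, but any invertible $C$ with the right enclosure property works). The key observation is that a point $x^{\ast} \in X$ satisfies $f(x^{\ast}) = 0$ if and only if it is a fixed point of the map $x \mapsto x - C f(x)$, since $C$ is invertible.

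First I would set up the mean value / Taylor estimate. For $x \in X$, write $f(x) = f(x_0) + \int_0^1 Df(x_0 + t(x - x_0))\,dt\,(x - x_0)$; since $X$ is convex (it is a product of intervals), the segment from $x_0$ to $x$ lies in $X$, so the integral-averaged Jacobian $M(x) := \int_0^1 Df(x_0 + t(x-x_0))\,dt$ is an element of the interval matrix $[Df(X)]$. Then
\[
x - C f(x) = x - C f(x_0) - C M(x)(x - x_0) = N(x_0) + \bigl(I - C M(x)\bigr)(x - x_0).
\]
The hypothesis $N(x_0, X) \subset X$ means $N(x_0) + (I - [Df(X)]^{-1}[Df(X)])(X - x_0) \subset X$ in the interval sense; since $M(x) \in [Df(X)]$ and $x - x_0 \in X - x_0$, the right-hand side above lies in the interval evaluation of $N(x_0, X)$, hence in $X$. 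This shows the continuous map $g(x) := x - C f(x)$ sends the compact convex set $X$ into itself, so Brouwer gives a fixed point $x^{\ast}$, i.e. a zero of $f$.

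For uniqueness, suppose $x^{\ast}, y^{\ast} \in X$ both solve $f = 0$. Applying the same Taylor expansion around $y^{\ast}$ instead of $x_0$ gives $0 = f(x^{\ast}) = \widetilde M (x^{\ast} - y^{\ast})$ with $\widetilde M \in [Df(X)]$. The standard fact behind the interval Newton method is that the hypothesis $N(x_0,X) \subset X$ with $[Df(X)]$ invertible (meaning every representative matrix is invertible) forces every such $\widetilde M$ to be nonsingular — indeed invertibility of the interval matrix is exactly the statement that $0$ is not an eigenvalue direction for any representative — so $x^{\ast} - y^{\ast} = 0$. Alternatively, uniqueness follows from a contraction estimate: the condition $N(x_0,X)\subset X$ together with invertibility yields $\| I - C M \| < 1$ uniformly for $M \in [Df(X)]$ in an appropriate norm, making $g$ a contraction on $X$; I would lean on whichever formulation is cleanest given the interval-arithmetic conventions already fixed in the paper.

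The main obstacle is the uniqueness/nonsingularity step: one must argue carefully that ``$[Df(X)]$ is invertible'' rules out the averaged Jacobian $\widetilde M$ between any two putative zeros being singular, and that the inclusion $N(x_0,X)\subset X$ is genuinely strong enough to give this. The existence half is a routine Brouwer argument once the self-map property is unpacked from the interval inclusion; the subtlety everywhere is bookkeeping between the interval-matrix object $[Df(X)]$ and its individual real representatives, and making sure the convexity of $X$ is used to keep all mean-value Jacobians inside $[Df(X)]$. Since this theorem is quoted from \cite{Al}, I would keep the proof brief, citing the reference for the detailed interval-arithmetic estimates and only sketching the Brouwer and contraction arguments.
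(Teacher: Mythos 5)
The paper does not prove Theorem \ref{th:interval-Newton}: it is quoted as a known result from \cite{Al}, so there is no in-paper proof to compare against. Evaluating your proposal on its own terms, the existence step has a genuine gap. You reinterpret $[Df(X)]^{-1}$ as a fixed matrix $C$ and try to show that $g(x)=x-Cf(x)$ maps $X$ into itself, asserting that the hypothesis $N(x_0,X)\subset X$ entails
\[
x_0 - Cf(x_0) + \bigl(I - C[Df(X)]\bigr)\bigl(X - x_0\bigr) \subset X.
\]
But the set on the left is the Krawczyk operator $K(x_0,X,C)$, not the interval Newton operator $N(x_0,X)=x_0-[Df(X)]^{-1}f(x_0)$ in which the interval inverse appears. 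The inclusion $K(x_0,X,C)\subset X$ is a strictly stronger hypothesis: the whole point of the interval Newton operator is that it produces tighter enclosures than Krawczyk, so $N(x_0,X)\subset X$ does not by itself give the self-map property for $g$. Your argument as written therefore does not establish existence under the stated hypothesis.

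The repair is to avoid the fixed preconditioner $C$ and work with the interval inverse directly. For each $x\in X$, the integral-averaged Jacobian $M(x)=\int_0^1 Df\bigl(x_0+t(x-x_0)\bigr)\,dt$ lies in $[Df(X)]$ because $X$ is convex, and it is nonsingular because $[Df(X)]$ is invertible. Hence $T(x):=x_0-M(x)^{-1}f(x_0)$ is a continuous map satisfying $T(x)\in N(x_0,X)\subset X$ for every $x\in X$. Brouwer gives a fixed point $x^{\ast}=T(x^{\ast})$, and combining $M(x^{\ast})(x^{\ast}-x_0)=-f(x_0)$ with $f(x^{\ast})-f(x_0)=M(x^{\ast})(x^{\ast}-x_0)$ yields $f(x^{\ast})=0$. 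Your uniqueness argument, via the averaged Jacobian between two putative zeros being a nonsingular element of $[Df(X)]$, is correct and is the standard one; the contraction-based alternative you float (showing $\left\Vert I-CM\right\Vert<1$) would again require the Krawczyk hypothesis rather than the interval Newton hypothesis and should be dropped.
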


\subsection{Restricted three body problem}

\label{sec:RTBP}

The problem is defined as follows: two main bodies rotate in the plane about
their common center of mass on circular orbits under mutual gravitational
influence. A third body moves in the same plane of motion as the two main
bodies, attracted by their gravitation, but not influencing their motion. The
problem is to describe the motion of the third body.

Usually, the two rotating bodies are referred to as the \emph{primaries}. The
third body can be regarded as a satellite or a spaceship of negligible mass.

We use a rotating system of coordinates centred at the center of mass. The
plane $X,Y$ rotates with the primaries. The primaries are on the $X$ axis, the
$Y$ axis is perpendicular to the $X$ axis and contained in the plane of rotation.

\begin{figure}[ptb]
\begin{center}
\includegraphics[height=4cm]{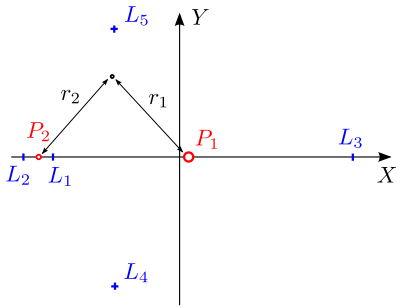}
\end{center}
\caption{Rotating system of coordinates with origin at the center of mass. The
sun has the mass $1-\mu$ and is fixed at $P_{1}=(\mu,0)$. The planet has the
mass $\mu$ is fixed at $P_{2}=(\mu-1,0)$. The third massless particle moves in
the $XY$ plane.}%
\label{fig:3bp}%
\end{figure}We rescale the masses $\mu_{1}$ and $\mu_{2}$ of the primaries so
that they satisfy the relation $\mu_{1}+\mu_{2}=1$. After such rescaling the
distance between the primaries is $1$. (See Szebehelly~\cite{Szeb}, section
1.5). We refer to the larger of the two primaries as the ``sun" and to the
smaller as the ``planet". We use a convention in which in the rotating
coordinates the sun is located to the right of the origin at $P_{1}=(\mu,0)$,
and the planet is located to the left at $P_{2}=(\mu-1,0)$.

The equations of motion of the third body are
\begin{subequations}
\label{eq:RTBP}%
\begin{align*}
\ddot{X}-2\dot{Y}  &  =\Omega_{X},\\
\ddot{Y}+2\dot{X}  &  =\Omega_{Y},
\end{align*}
where
\end{subequations}
\[
\Omega=\frac{1}{2}(X^{2}+Y^{2})+\frac{1-\mu}{r_{1}}+\frac{\mu}{r_{2}},
\]
and $r_{1},r_{2}$ denote the distances from the third body to the larger and
the smaller primary, respectively (see Figure \ref{fig:3bp})
\begin{align*}
r_{1}^{2}  &  =(X-\mu)^{2}+Y^{2},\\
r_{2}^{2}  &  =(X-\mu+1)^{2}+Y^{2}.
\end{align*}
These equations have an integral of motion~\cite{Szeb} called the Jacobi
integral
\[
C=2\Omega-(\dot{X}^{2}+\dot{Y}^{2}).
\]

The equations of motion take Hamiltonian form if we consider positions $X$,
$Y$ and momenta $P_{X}=\dot{X}-Y$, $P_{Y}=\dot{Y}+X$. The Hamiltonian is
\begin{equation}
H=\frac{1}{2}(P_{X}^{2}+P_{Y}^{2})+YP_{X}-XP_{Y}-\frac{1-\mu}{r_{1}}-\frac
{\mu}{r_{2}}, \label{eq:Hamiltonian}%
\end{equation}
with the vector field given by%
\begin{align*}
F  &  =J\nabla H,\\
J  &  =\left(
\begin{array}
[c]{cc}%
0 & \mathrm{id}\\
-\mathrm{id} & 0
\end{array}
\right)  ,\quad\mathrm{id}=\left(
\begin{array}
[c]{cc}%
1 & 0\\
0 & 1
\end{array}
\right)  .
\end{align*}
The Hamiltonian and the Jacobi integral are simply related by $H=-\frac{C}{2}$.

Due to the Hamiltonian integral, the dimensionality of the space can be
reduced by one. Trajectories of the system stay on the \emph{energy surface}
$M$ given by $H(X,Y,P_{X},P_{Y})=h=$\textit{constant}. Equivalently, $M$ is
the level surface
\begin{equation}
M\equiv\{C(X,Y,\dot{X},\dot{Y})=c=-2h\} \label{eq:energySurface}%
\end{equation}
of the Jacobi integral.

The problem has a reversing symmetry defined by
\begin{equation}
S(X,Y,P_{X},P_{Y})=\left(  X,-Y,-P_{X},P_{Y}\right)  . \label{eq:S-sym}%
\end{equation}
Using a notation $\mathbf{x}=\left(  X,-Y,-P_{X},P_{Y}\right)  $ for the
coordinates, and $\phi_{t}(\mathbf{x})$ for the flow of the vector field%
\[
\mathbf{\dot{x}}=J\nabla H(\mathbf{x}),
\]
the system has the property
\begin{equation}
S(\phi_{t}(\mathbf{x}))=\phi_{-t}(S(\mathbf{x})). \label{eq:S-sym-property}%
\end{equation}

The problem has five equilibrium points (see \cite{Szeb}). Three of them,
denoted $L_{1},L_{2}$ and $L_{3}$, lie on the $X$-axis and are usually called
the `collinear' equilibrium points (see Figure \ref{fig:3bp}). Notice that we
denote $L_{1}$ the \emph{interior} collinear point, located between the primaries.

The Jacobian of the vector field at $L_{1}$ has two real and two purely
imaginary eigenvalues. It possesses a one dimensional unstable manifold. By
(\ref{eq:S-sym-property}), the one dimensional stable manifold is
$S$-symmetric to the unstable manifold.

\section{Statement of main results\label{sec:statement}}

Our paper contains two results. The first is a method for establishing strong
invariant manifolds for fixed points. The method is based on cone conditions,
and is tailor made for rigorous (interval based) computer assisted
implementation. The second result is an application of the method to prove a
homoclinic connection of a libration fixed point in the restricted three body problem.

\subsection{Establishing strong invariant
manifolds\label{sec:main-results-manifolds}}

Let $N=\overline{B}_{u}\times\overline{B}_{s}$ and
\[
f:N\rightarrow\mathbb{R}^{u}\times\mathbb{R}^{s}%
\]
be a $C^{1}$ function. We assume that there exists a fixed point for $f$ in
the interior of $N$. For simplicity we assume that the fixed point is at zero.
This assumption can easily be relaxed (see Remark \ref{rem:fixed-point}).

Our method can be applied to establish strong stable and strong unstable
manifolds defined as follows:

\begin{definition}
Let $U$ be a neighborhood of zero and let $\lambda<1$. A set $W_{\lambda
,U}^{s}\subset U$ consisting of all points $p\in U$ satisfying:

\begin{enumerate}
\item $f^{n}(p)\in U$ for any $n\in\mathbb{N}$;

\item there exists a constant $C>0$ (which can depend on $p$), such that for
all $n \ge0$%
\begin{equation}
\left\Vert f^{n}(p)\right\Vert \leq C\lambda^{n}; \label{eq:contr-rate-cond}%
\end{equation}

\end{enumerate}

\noindent is called a strong stable manifold, with contraction rate $\lambda$,
in $U$.
\end{definition}

\begin{definition}
Let $U\subset\mathbb{R}^{u}\times\mathbb{R}^{s}$ be a set and let $p\in U$. We
say that a sequence $(p_{0},p_{-1},p_{-2},\ldots)$ is a backward trajectory of
$p$ in $U$ if $p_{0}=p$ and for any $i<0$, $p_{i}\in U$ and $p_{i+1}=f(p_{i})$.
\end{definition}

\begin{definition}
Let $U$ be a neighborhood of zero and let $\lambda>1$. A set $W_{\lambda
,U}^{u}\subset U$ consisting of all points $p\in U$ satisfying:

\begin{enumerate}
\item there exists a backward trajectory $(p_{0},p_{-1},p_{-2},\ldots)$ of $p$
in $U$;

\item for any backward trajectory $(p_{0},p_{-1},p_{-2},\ldots)$ of $p$ in $U$
there exists a constant $C>0$ (which can depend on the backward trajectory),
such that for all $n\leq0$%
\begin{equation}
\left\Vert p_{n}\right\Vert \leq C\lambda^{n}; \label{eq:exp-rate-cond}%
\end{equation}

\end{enumerate}

\noindent is called a strong unstable manifold, with expansion rate $\lambda$,
in $U$.
\end{definition}

\begin{example}
\label{exmpl:contr-exp-rate}Let $f_{1}(x,y)=(\frac{1}{2}x,\frac{1}{3}y)$. The
stable manifold with contraction rate $\frac{1}{2}$ in $\mathbb{R}^{2}$ is
equal to $\mathbb{R}^{2}$ and the stable manifold with contraction rate
$\frac{1}{3}$ in $\mathbb{R}^{2}$ is $\left\{  0\right\}  \times\mathbb{R}$.
Similarly, for $f_{2}(x,y)=(2x,3y)$ the unstable manifold with expansion rate
$2$ in $\mathbb{R}^{2}$ is $\mathbb{R}^{2}$ and the unstable manifold with
expansion rate $3$ in $\mathbb{R}^{2}$ is $\left\{  0\right\}  \times
\mathbb{R}$.
\end{example}

Let $\alpha_{h},\alpha_{v}\in(0,1)$ and let
\[
Q_{h},Q_{v}:\mathbb{R}^{u}\times\mathbb{R}^{s}\rightarrow\mathbb{R}%
\]
be defined as%
\begin{align}
Q_{h}(x,y)  &  =\alpha_{h}\left\Vert x\right\Vert ^{2}-\left\Vert y\right\Vert
^{2}\label{eq:cones-def1}\\
Q_{v}(x,y)  &  =\left\Vert x\right\Vert ^{2}-\alpha_{v}\left\Vert y\right\Vert
^{2}. \label{eq:cones-def2}%
\end{align}

\begin{definition}
\label{def:f_cone_cond} Let $\alpha,\beta,m>0$ and let $Q(x,y)=\alpha
\left\Vert x\right\Vert ^{2}-\beta\left\Vert y\right\Vert ^{2}.$ We say that
$f$ satisfies cone conditions for $(Q,m)$ in $N$ if for any $p_{1}\neq p_{2},$
$p_{1},p_{2}\in N,$ holds
\[
Q(f(p_{1})-f(p_{2}))>mQ(p_{1}-p_{2}).
\]

\end{definition}

The following theorems are the main results of our paper.

\begin{theorem}
\label{th:main-unstable}Assume that $m_{v}>m_{h}>0$ and $m_{v}>1$. Let
$r^{u}=\sqrt{1-\alpha_{v}}$ and $U=\overline{B}_{u}(0,r^{u})\times\overline
{B}_{s}$. If $f$ satisfies cone conditions for $(Q_{h},m_{h})$ and
$(Q_{v},m_{v})$ in $N,$ then there exists a function $w^{u}:\overline{B}%
_{u}(0,r^{u})\rightarrow\overline{B}_{s},$ such that
\[
W_{\sqrt{m_{v}},N}^{u}\cap U=\left\{  (x,w^{u}(x))|x\in\overline{B}%
_{u}(0,r^{u})\right\}  .
\]
Moreover, $w^{u}$ is Lipschitz with a constant $L_{u}=\sqrt{\alpha_{h}}$.
\end{theorem}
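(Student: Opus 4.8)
The plan is to construct the unstable manifold as a graph over $\overline{B}_u(0,r^u)$ using a graph-transform argument, with the cone conditions supplying both the existence of a well-defined limit and the Lipschitz bound. First I would record the geometric meaning of the two cone conditions. The condition for $(Q_v,m_v)$ with $m_v>1$ is the expansion/alignment condition: since $Q_v(p_1-p_2)>0$ means $\|\pi_x(p_1-p_2)\| > \sqrt{\alpha_v}\|\pi_y(p_1-p_2)\|$, it forces $f$ to map ``horizontal'' cones into themselves and to expand in the $x$-direction by a factor at least $\sqrt{m_v}$; in particular, for any two points in $N$ that start in the $Q_v$-positive cone, the distance grows at rate $\sqrt{m_v}$. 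This is exactly what pins down the expansion rate $\lambda=\sqrt{m_v}$ in the definition of $W^u_{\lambda,N}$. The condition for $(Q_h,m_h)$ is the one that controls the Lipschitz constant: if $Q_h(p_1-p_2)\le 0$, i.e.\ $\|\pi_y(p_1-p_2)\|\ge\sqrt{\alpha_h}\|\pi_x(p_1-p_2)\|$, then this must also have held for the preimages, so backward orbits that stay in $N$ automatically satisfy $\|\pi_y\Delta\|<\sqrt{\alpha_h}\|\pi_x\Delta\|$ — giving the Lipschitz constant $L_u=\sqrt{\alpha_h}$. I expect the excerpt's Section~\ref{sec:cones-discs} to package these implications as lemmas about ``horizontal discs'' and their behaviour under $f$ and $f^{-1}$, and I would quote those.

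Next I would set up the graph transform. Consider the space of Lipschitz functions $w:\overline{B}_u(0,r^u)\to\overline{B}_s$ with Lipschitz constant $\sqrt{\alpha_h}$, whose graphs lie in $U$; this is where the specific choice $r^u=\sqrt{1-\alpha_v}$ matters, because it is exactly the radius for which $Q_v\ge 0$ on the graph and $f$ of the graph still projects onto all of $\overline{B}_u(0,r^u)$ in the $x$-direction (so the transformed graph is again defined over the full ball, not a smaller one). For such a $w$, I would define $\mathcal{T}(w)$ by: for $x\in\overline{B}_u(0,r^u)$, find the unique point $p$ on $\mathrm{graph}(w)$ with $\pi_x f(p)=x$ — uniqueness and existence coming from the $Q_v$ cone condition forcing $\pi_x\circ f|_{\mathrm{graph}(w)}$ to be an expanding homeomorphism onto a set containing $\overline{B}_u(0,r^u)$ — and set $\mathcal{T}(w)(x)=\pi_y f(p)$. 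One then checks $\mathcal{T}(w)$ again has Lipschitz constant $\sqrt{\alpha_h}$ (from the $Q_h$ condition applied to $f(p_1),f(p_2)$) and graph in $U$. The fixed point $w^u$ of $\mathcal{T}$ is obtained either by a contraction-mapping / Banach argument in an appropriate metric or, more in the spirit of this paper, by taking the graphs $f^n(\overline{B}_u(0,r^u)\times\{0\})$, showing they are a nested/Cauchy sequence of horizontal discs, and passing to the limit — the uniform Lipschitz bound plus the $\sqrt{m_v}$-expansion giving convergence.

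Finally I would verify that $\mathrm{graph}(w^u)\cap U$ is exactly $W^u_{\sqrt{m_v},N}\cap U$. For the inclusion $\mathrm{graph}(w^u)\subset W^u$: by invariance of the graph, every point has a backward trajectory in $N$ staying on the graph, and along it the $Q_v$ cone condition gives $\|p_{n}\|\le C(\sqrt{m_v})^{n}$ for $n\le 0$ (run the expansion estimate backward), so condition~(2) of the definition holds. For the reverse inclusion: if $p\in U$ lies off the graph, then $\Delta=p-(x,w^u(x))$ with $x=\pi_x p$ has $\pi_x\Delta=0$, so $Q_v(\Delta)=-\alpha_v\|\pi_y\Delta\|^2<0$; the $Q_v$ cone condition then shows that for \emph{every} pair of backward orbits this quantity was negative in the past and, combined with $m_v>m_h$ and the $Q_h$ estimate, that $\|\pi_y(p_n-(x,w^u(x))_n)\|$ cannot have decayed at rate $\sqrt{m_v}$ — forcing either the backward orbit to leave $N$ or the rate condition~(\ref{eq:exp-rate-cond}) to fail, so $p\notin W^u_{\sqrt{m_v},N}$. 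The main obstacle I anticipate is the bookkeeping at the boundary: ensuring that the graph transform genuinely returns a function defined on the \emph{whole} ball $\overline{B}_u(0,r^u)$ and that points on $\partial U$ which leave $N$ under iteration are correctly excluded — i.e.\ getting the interplay between the radius $r^u=\sqrt{1-\alpha_v}$, the expansion factor $\sqrt{m_v}>1$, and the cone $Q_v\ge0$ to close up exactly. This is presumably where the auxiliary results of Section~\ref{sec:cones-discs} do the real work, and I would lean on them rather than re-deriving the disc estimates from scratch.
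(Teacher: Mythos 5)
Your overall plan matches the paper's: start from the flat disc $h_0(x)=(x\sqrt{1-\alpha_v},0)$, iterate a graph transform built from Lemma \ref{lem:hor-graph-transform}, pass to the limit, and then pin down both the expansion rate and the Lipschitz constant using the two cone conditions. You also correctly identify the technical heart of the matter — making the transformed disc close up over the full ball $\overline{B}_u(0,r^u)$ — which the paper handles with the open-closed homotopy argument in Lemma \ref{lem:hor-graph-transform}, producing $f\circ h$ cut with $\{Q_v\le c^\ast\}$ and reparametrized by $\phi$.

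However, two things in your convergence and Lipschitz steps need fixing. First, you propose showing the iterated discs form a ``nested/Cauchy sequence''; the paper makes no such claim, and it is not obvious that the graphs are nested or Cauchy in any metric. What the paper actually does is fix $x^\ast$, take points $p_i^\ast\in h_i(\overline{B}_u)$ with $\pi_x p_i^\ast=x^\ast$ (Lemma \ref{lem:hor-disc-proj}), extract a convergent subsequence by compactness of $N$, use a diagonal argument to produce a backward trajectory in $N\cap\{Q_h\ge 0\}$ for the limit point, invoke Lemma \ref{lem:positive-cone-back-conv} to get the rate $\sqrt{m_v}$, and finally establish uniqueness of the limit via the $Q_h$ cone condition run backward in time together with $m_v>m_h$. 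That uniqueness argument is what makes $w^u$ well-defined; a contraction-mapping shortcut is not available here without additional work, since the cone conditions by themselves do not give a contraction on a space of Lipschitz graphs. Second, your stated mechanism for the Lipschitz constant has the inequality pointing the wrong way: you invoke $Q_h(p_1-p_2)\le 0$ propagating backward, but the relevant fact is that the discs $h_i$ are $Q_h$-horizontal, i.e.\ $Q_h(h_i(x_1)-h_i(x_2))>0$, and this non-negativity survives in the limit, giving $\alpha_h\|x_1^\ast-x_2^\ast\|^2\ge\|w^u(x_1^\ast)-w^u(x_2^\ast)\|^2$. The $Q_h\le 0$ direction is instead the one used for uniqueness (if $\pi_x q^\ast=\pi_x q^{\ast\ast}$ then $Q_h(q^\ast-q^{\ast\ast})\le 0$, which under backward iteration blows up at rate $m_h^{-1}$ and contradicts the $\sqrt{m_v}$-rate). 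Keeping these two uses of $Q_h$ separate is essential to the argument.
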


\begin{theorem}
\label{th:main-stable}Assume that $m_{v}>m_{h}>0$ and $m_{h}<1.$ Let
$r^{s}=\sqrt{1-\alpha_{h}}$ and $U=\overline{B}_{u}\times\overline{B}%
_{s}(0,r^{s}).$ If $f$ satisfies cone conditions for $(Q_{h},m_{h})$ and
$(Q_{v},m_{v})$ in $N,$ then there exists a function $w^{s}:\overline{B}%
_{s}(0,r^{s})\rightarrow B_{u},$ such that
\[
W_{\sqrt{m_{h}},N}^{s}\cap U=\left\{  (w^{s}(y),y)|y\in\overline{B}%
_{s}(0,r^{s})\right\}  .
\]
Moreover, $w^{s}$ is Lipschitz with a constant $L_{s}=\sqrt{\alpha_{v}}.$
\end{theorem}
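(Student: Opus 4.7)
The plan is to parallel the proof of Theorem~\ref{th:main-unstable}, interchanging the horizontal and vertical cones and replacing forward iteration of $f$ by a backward graph transform. Because $f$ is not assumed invertible, I cannot literally apply Theorem~\ref{th:main-unstable} to $f^{-1}$, but I can reuse the abstract disc/cone framework from Section~\ref{sec:cones-discs}, now applied to graphs over $\overline{B}_s(0,r^s)$ with values in $\overline{B}_u$.

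Uniqueness and the Lipschitz bound will come from the $(Q_v,m_v)$ cone condition alone. For two points $p_i=(x_i,y_i)$ in $W^s_{\sqrt{m_h},N}\cap U$, the forward orbits decay geometrically, so $\|f^n(p_1)-f^n(p_2)\|$ is uniformly bounded. Iterating the cone inequality gives
\[
Q_v(f^n(p_1)-f^n(p_2)) > m_v^n\, Q_v(p_1-p_2)
\]
for all $n\geq 0$, and since $m_v>1$ this forces $Q_v(p_1-p_2)\leq 0$, i.e.\ $\|x_1-x_2\|\leq\sqrt{\alpha_v}\,\|y_1-y_2\|$. Specialising to $y_1=y_2$ yields $x_1=x_2$, so $W^s_{\sqrt{m_h},N}\cap U$ is the graph of a single-valued function $w^s$; the same inequality is the claimed Lipschitz bound $L_s=\sqrt{\alpha_v}$.

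For existence I would construct $w^s$ as the unique fixed point of a backward graph transform $T$ on the complete metric space of maps $w:\overline{B}_s(0,r^s)\to\overline{B}_u$ with $\mathrm{Lip}(w)\leq\sqrt{\alpha_v}$, equipped with the sup metric. I would define $Tw$ by $\mathrm{graph}(Tw)=\{p\in N : f(p)\in\mathrm{graph}(w)\}$, parametrised by its $y$-projection. Appealing to Section~\ref{sec:cones-discs}, the $(Q_v,m_v)$ condition combined with the choice $r^s=\sqrt{1-\alpha_h}$ should make this $y$-projection a bijection onto $\overline{B}_s(0,r^s)$ with the $x$-component confined to $\overline{B}_u$, so $Tw$ lies in the same Lipschitz class; the $(Q_h,m_h)$ condition with $m_h<1$ then makes $T$ a contraction of rate $\sqrt{m_h}$. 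A point $p=(w^s(y_0),y_0)$ on the graph of the fixed point has its forward orbit in $\mathrm{graph}(w^s)\subset N$, and iterating $Q_h$ against the fixed point gives $|Q_h(f^n(p))|< m_h^n|Q_h(p)|$; combined with $\|y_n\|^2\leq |Q_h(f^n(p))|/(1-\alpha_h\alpha_v)$ (which follows from $\|w^s(y_n)\|\leq\sqrt{\alpha_v}\|y_n\|$), this yields $\|f^n(p)\|\leq C(\sqrt{m_h})^n$, so $p\in W^s_{\sqrt{m_h},N}$.

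The hard part will be the well-definedness of $T$: for every $y_0\in\overline{B}_s(0,r^s)$ I need exactly one $x_0\in\overline{B}_u$ with $f(x_0,y_0)\in\mathrm{graph}(w)$. This surjectivity-and-uniqueness statement for the $y$-projection of $f^{-1}(\mathrm{graph}(w))\cap N$ is precisely what the disc/cone lemmas of Section~\ref{sec:cones-discs} are designed to supply, and it is where the geometric content of the cone conditions really enters; once it is in hand, the remaining steps (Lipschitz preservation, contraction, exponential decay) reduce to direct algebraic manipulations of the quadratic forms $Q_h$ and $Q_v$.
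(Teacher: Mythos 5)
Your uniqueness/Lipschitz step has an error: you invoke $m_v>1$, but Theorem~\ref{th:main-stable} assumes only $m_v>m_h>0$ and $m_h<1$, so $m_v$ may well be $\le 1$. (The hypothesis $m_v>1$ belongs to Theorem~\ref{th:main-unstable}.) The repair uses the actual decay rate rather than mere boundedness. For $p_1,p_2\in W^s_{\sqrt{m_h},N}$ one has $\|f^n(p_1)-f^n(p_2)\|^2\le(C_1+C_2)^2\,m_h^n$; if $Q_v(p_1-p_2)>0$, then combining $Q_v(q)\le\|q\|^2$ with the $(Q_v,m_v)$ cone condition gives $m_v^n\,Q_v(p_1-p_2)<Q_v\bigl(f^n(p_1)-f^n(p_2)\bigr)\le(C_1+C_2)^2\,m_h^n$, hence $Q_v(p_1-p_2)<(C_1+C_2)^2(m_h/m_v)^n\to 0$ because $m_h<m_v$ — which is the condition the statement actually provides. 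This is the rate comparison the paper's proof carries out.

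For existence your route is genuinely different from the paper's. You propose a Hadamard-style contraction $T$ on the space of Lipschitz graphs over $\overline{B}_s(0,r^s)$, whereas the paper works forward: for each fixed $y_0$ it iterates the forward graph transform (Lemmas~\ref{lem:hor-graph-transform} and~\ref{lem:stable-tool}) starting from the $Q_v$-horizontal disc $h_0(x)=(x\sqrt{1-\alpha_v(1-\|y_0\|^2)},\,y_0)$, extracts by compactness a parameter $x^*$ whose entire forward orbit stays in $\{Q_v\le 0\}\cap N$, and then applies Lemma~\ref{lem:negative-cone-forward-conv} to place $h_0(x^*)$ in $W^s_{\sqrt{m_h},N}$. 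This avoids ever needing to solve $f(x_0,y_0)\in\mathrm{graph}(w)$ for $x_0$. You correctly flag the well-definedness of $T$ as the hard part, but your claim that Section~\ref{sec:cones-discs} supplies it is not accurate: those lemmas concern forward images of $Q$-horizontal discs over $\overline{B}_u$ intersected with sublevel sets $\{Q\le c^*\}$, not the intersection of such an image with an arbitrary Lipschitz graph over $\overline{B}_s$. A degree argument in the spirit of Lemma~\ref{lem:hor-graph-transform} would give the needed surjectivity-plus-uniqueness, but you would have to prove it; as written, the existence half of your proposal has a genuine gap. With that lemma in hand (and after checking that $Tw$ remains in the admissible Lipschitz class and that $T$ contracts at a rate driven by $m_h<1$), your fixed-point plan would go through; the paper's compactness route sidesteps all of those verifications at the cost of being nonconstructive.
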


The proofs of Theorems \ref{th:main-unstable}, \ref{th:main-stable} are given
in Section \ref{sec:construction}.

\begin{remark}
Let us say that the fixed point has a stable manifold. Theorem
\ref{th:main-stable} can be used to establish a lower dimensional manifold
(which is a sub manifold of the full stable manifold), that is associated with
some prescribed contraction rate. For instance, $f_{1}$ from Example
\ref{exmpl:contr-exp-rate} has such a lower dimensional stable manifold that
is associated with contraction rate $\lambda=\frac{1}{3}$.

Similarly, Theorem \ref{th:main-unstable} can be used to establish lower
dimensional submanifolds of an unstable manifold, that are associated with
prescribed expansion rates. The $f_{2}$ from Example
\ref{exmpl:contr-exp-rate} has such a lower dimensional unstable manifold that
is associated with expansion rate $\lambda=3$.
\end{remark}

Theorems \ref{th:main-unstable}, \ref{th:main-stable} are formulated for maps.
In Section \ref{sec:flows} we show mirror results for flows (see Theorems
\ref{th:odes-wu}, \ref{th:odes-ws}). We emphasize that these results do not
require rigorous integration, but follows directly from appropriate bounds on
the vector field.

Let us point out that assumptions of Theorems \ref{th:main-unstable},
\ref{th:main-stable} can easily be verified using the following lemma.

\begin{lemma}
\label{lem:cc-pos-def}Let $\alpha,\beta,m>0$ and let $Q(x,y)=\alpha\left\Vert
x\right\Vert ^{2}-\beta\left\Vert y\right\Vert ^{2}.$ Assume that for any
$B\in\left[  Df(N)\right]  ,$ the quadratic form%
\[
V(q)=Q(Bq)-mQ(q)
\]
is positive definite, then $f$ satisfies cone conditions for $(Q,m)$ in $N$.
\end{lemma}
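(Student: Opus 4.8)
The plan is to reduce the cone condition inequality $Q(f(p_1)-f(p_2)) > m Q(p_1-p_2)$ for all distinct $p_1, p_2 \in N$ to the pointwise positive-definiteness hypothesis on the quadratic forms $V(q) = Q(Bq) - mQ(q)$, $B \in [Df(N)]$. The natural bridge is the mean value theorem in integral form: for $p_1, p_2 \in N$ (and $N$ convex, being a product of balls), write $f(p_1) - f(p_2) = \left(\int_0^1 Df(p_2 + t(p_1-p_2))\,dt\right)(p_1 - p_2)$. Denote the matrix in parentheses by $B$. By definition of the interval enclosure $[Df(N)]$, each entry $B_{ij} = \int_0^1 \frac{df_i}{dx_j}(p_2 + t(p_1-p_2))\,dt$ lies in $\left[\inf_{x\in N}\frac{df_i}{dx_j}(x),\ \sup_{x\in N}\frac{df_i}{dx_j}(x)\right]$, so $B \in [Df(N)]$. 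Setting $q = p_1 - p_2 \ne 0$, we get $Q(f(p_1)-f(p_2)) - m Q(p_1-p_2) = Q(Bq) - mQ(q) = V(q)$, which is strictly positive by hypothesis (since $V$ is positive definite and $q \ne 0$). This gives exactly the strict inequality in Definition \ref{def:f_cone_cond}.

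The one point that needs a little care — and which I expect to be the only genuine obstacle — is the claim that the componentwise integral average $B$ actually belongs to the interval matrix $[Df(N)]$. This is where convexity of $N$ is used: the segment $\{p_2 + t(p_1-p_2) : t \in [0,1]\}$ stays inside $N = \overline{B}_u \times \overline{B}_s$, so every value $\frac{df_i}{dx_j}(p_2 + t(p_1-p_2))$ lies between $\inf_{x \in N}$ and $\sup_{x\in N}$ of that partial derivative, and hence so does the integral over $t \in [0,1]$ (an average of numbers in an interval stays in that interval). The $C^1$ assumption on $f$ guarantees the partial derivatives are continuous, so the suprema and infima over the compact set $N$ are finite and the integrals are well defined. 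Everything else is a direct substitution into the definitions of $Q$, $V$ and the cone condition, with no further estimates required.

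This completes the outline; the full proof is short and consists of assembling the three ingredients above in order: (i) integral mean value representation of $f(p_1) - f(p_2)$, (ii) verification that the resulting matrix lies in $[Df(N)]$, (iii) identification of $Q(f(p_1)-f(p_2)) - mQ(p_1-p_2)$ with $V(p_1-p_2)$ and invocation of positive definiteness.
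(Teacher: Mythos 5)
Your proposal is correct and matches the paper's own proof essentially line for line: both use the integral mean value representation $f(p_1)-f(p_2)=\bigl(\int_0^1 Df(p_2+t(p_1-p_2))\,dt\bigr)(p_1-p_2)$, observe that the averaged matrix lies in $[Df(N)]$, and then invoke positive definiteness of $V$ with $q=p_1-p_2$. You are in fact slightly more explicit than the paper about why the averaged matrix belongs to $[Df(N)]$ (convexity of $N$ plus the fact that an average of values in an interval stays in that interval), which is a harmless improvement.
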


\begin{proof}
The proof is given in Appendix \ref{app:cc-pos-def}.
\end{proof}

There are a number of algorithms that can be used to verify if a matrix is
positive definite. This can also be done using interval arithmetic.

Let us finish the section with simple examples, which provide some intuition
for the results.

\begin{example}
Let $a,b\in\mathbb{R}$, $a >b>0 $, and let $f:N\rightarrow\mathbb{R}^{2}$ be a
linear map
\[
f(x,y)=\left(  ax,by\right)  ,
\]
then $f$ satisfies cone conditions for $(Q_{h},m_{h})$ and $(Q_{v},m_{v})$ for
any $m_{h},m_{v}\in\left(  b^{2},a^{2}\right)  $. Let us note that we do not
need to assume that $a >1$ or that $b <1$. Moreover, $f_{\varepsilon
}=f+\varepsilon g$ satisfies cone conditions, provided that $g$ is
differentiable and $\varepsilon$ is small enough.
\end{example}

\begin{example}
Let $a,b\in\mathbb{R}$, $a>1> b>0$ and let $R:\mathbb{R}^{2}\ni\theta
\rightarrow R(\theta)\in\mathbb{R}^{2}$ be a rotation. Consider $f:\mathbb{R}%
^{4}\rightarrow\mathbb{R}^{4},$ of the form%
\[
f\left(  \xi,\theta,\eta\right)  =\left(  a\xi,R(\theta),b\eta\right)  .
\]

For coordinates $x_{1},y_{1}$ chosen as
\[
x_{1}=\xi,\qquad y_{1}=(\theta,\eta),
\]
assumptions of Theorem \ref{th:main-unstable} are satisfied for any
$\alpha_{h},\alpha_{v}\in\left(  0,1\right)  $ and any $m_{h},m_{v}\in
(1,a^{2})$ satisfying $m_{v}>m_{h}$.

On the other hand, for coordinates $x_{2},y_{2}$ chosen as
\[
x_{2}=\left(  \xi,\theta\right)  ,\qquad y_{2}=\eta,
\]
assumptions of Theorem \ref{th:main-stable} are satisfied for any $\alpha
_{h},\alpha_{v}\in\left(  0,1\right)  $ and any $m_{h},m_{v}\in(b^{2},1)$
satisfying $m_{v}>m_{h}$

We thus see that we can apply Theorems \ref{th:main-unstable} and
\ref{th:main-stable} by swapping the roles of some of the coordinates.

The assumptions still hold for $f_{\varepsilon}=f+\varepsilon g,$ provided
that $g$ is differentiable and $\varepsilon$ is small enough.
\end{example}

We conclude this section with a remark that the fixed point does not need to
be centered at zero in order to apply our method.

\begin{remark}
\label{rem:fixed-point}The proofs of Theorems \ref{th:main-unstable} and
\ref{th:main-stable} are conducted under the assumption that the fixed point
is at zero. In many applications though it can be difficult to establish the
fixed point analytically. In computer assisted proofs the enclosure of a fixed
point can be obtained using the interval Newton theorem (Theorem
\ref{th:interval-Newton}). Assuming that we know that the fixed point is
contained in a set $B\subset\mathbb{R}^{u+s},$ it is sufficient to verify cone
conditions on a set $N^{\prime}=N+B$.
\end{remark}

\subsection{Establishing existence of homoclinic orbits in the restricted
three body problem}

In the work by Llibre, Martinez and Sim\'o \cite{Simo} it is shown that for
suitably chosen family of parameters $\mu\in\{\mu_{k}^{\ast}\}_{k=2}^{\infty}%
$, $\mu_{k+1}^{\ast}<\mu_{k}^{\ast},$ the unstable and stable manifolds of
$L_{1}^{\mu_{k}^{\ast}}$ coincide, leading to a homoclinic orbit. The paper
\cite{Simo} contains numerical evidence of such homoclinic orbits for the
first number of the larger of these parameters $\mu_{k}^{\ast}$, and gives an
analytic proof for sufficiently small $\mu_{k}^{\ast}$.

The aim of this section is to show that using our method it is possible to
obtain rigorous enclosures of the stable and unstable manifolds, and to
validate the existence of homoclinic orbits for the large values of $\mu
_{k}^{\ast}$. We focus on the largest of the parameters%
\[
\mu_{2}^{\ast}\approx0.004253863522
\]
and prove that
\[
\mu_{2}^{\ast}\in0.004253863522+10^{-10}\left[  -1,1\right]  .
\]
The established homoclinic connection is depicted in Figure
\ref{fig:homoclinic}.

\begin{remark}
Our estimate on the parameter for which we have a homoclinic orbit to
$L_{1}^{\mu_{2}^{\ast}}$ is very tight. This is thanks to the fact that our
method for establishing invariant manifolds produces very tight rigorous
bounds. This demonstrates that it is a tool that can successfully be applied
for nontrivial problems.
\end{remark}

\begin{remark}
Our paper focuses on $\mu_{2}^{\ast}$ since it is the largest parameter, hence
furthest away from the analytic proof of \cite{Simo}. Using our method one can
obtain a proof also for other parameters. As the parameters become smaller
though, the proof becomes more challenging numerically.
\end{remark}

\begin{figure}[ptb]
\begin{center}
\includegraphics[height=4cm]{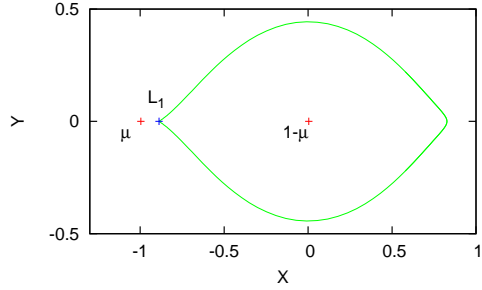}
\end{center}
\caption{Homoclinic orbit in green, the masses in red, and the fixed point
$L_{1}$ in blue.}%
\label{fig:homoclinic}%
\end{figure}

%TCIDATA{OutputFilter=latex2.dll}
%TCIDATA{Version=5.00.0.2606}
%TCIDATA{LaTeXparent=0,0,online-edit.tex}

\section{Cones and horizontal discs\label{sec:cones-discs}}

In this section we give some auxiliary results, which are then used in the
proofs of Theorems \ref{th:main-unstable}, \ref{th:main-stable} in section
\ref{sec:construction}.

We start with some simple facts which follow straight from
(\ref{eq:cones-def1}--\ref{eq:cones-def2}). We formulate this as a remark, and
give the proof in the appendix.

\begin{remark}
\label{rem:cones-setup}

\begin{enumerate}
\item \label{rem:cones-setup1}If $\left\Vert x\right\Vert \leq1$ and
$Q_{h}\left(  x,y\right)  \geq\alpha_{h}-1,$ then $\left\Vert y\right\Vert
\leq1.$

\item \label{rem:cones-setup2}If $\left\Vert y\right\Vert \leq1$ and
$Q_{v}\left(  x,y\right)  \leq1-\alpha_{v}$ then $\left\Vert x\right\Vert
\leq1$.

\item \label{rem:cones-setup2a}If $Q_{h}\left(  x,y\right)  \geq\alpha_{h}-1$
and $Q_{v}\left(  x,y\right)  \leq1-\alpha_{v}$ then $(x,y)\in N$.

\item \label{rem:cones-setup3}If $\left\Vert y\right\Vert \leq a$ then
$Q_{h}\left(  x,y\right)  \geq-a^{2}$.
\end{enumerate}
\end{remark}

\begin{proof}
The proof is given in Appendix \ref{app:cones-setup}.
\end{proof}

We now give two technical lemmas.

\begin{lemma}
\label{lem:positive-cone-back-conv}Assume that $(q_{0},q_{-1},q_{-2},\ldots)$
is a backward trajectory in $\{Q_{h}\geq0\}\cap N$. If $f$ satisfies cone conditions for
$(Q_{v},m_{v}),$ then for $C=\sqrt{2\left(  1-\alpha_{v}\alpha_{h}\right)
^{-1}}$ and any $k\leq0$
\[
\left\Vert q_{k}\right\Vert \leq C\left(  \sqrt{m_{v}}\right)  ^{k}.
\]

\end{lemma}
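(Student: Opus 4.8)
The goal is to show that a backward trajectory that stays in $\{Q_h\ge 0\}\cap N$ converges to the fixed point at the geometric rate $\sqrt{m_v}^{\,k}$ as $k\to-\infty$. The idea is to exploit the cone condition for $(Q_v,m_v)$ applied along consecutive points of the backward trajectory together with the point $0$ (which is a fixed point of $f$), and then to convert the resulting control on $Q_v(q_k)$ into control on $\|q_k\|$ using the fact that we also know $Q_h(q_k)\ge 0$.

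First I would record the basic inequality. Since $f(0)=0$, applying Definition \ref{def:f_cone_cond} with $p_1=q_{k-1}$, $p_2=0$ gives
\[
Q_v(q_k)=Q_v(f(q_{k-1})-f(0))>m_v\,Q_v(q_{k-1}),
\]
valid as long as $q_{k-1}\ne 0$ (the case $q_{k-1}=0$ forces the whole tail to be zero and is trivial). Iterating this from some fixed index down to $k$, and using $m_v>1$, yields $Q_v(q_k)\le m_v^{k}\,Q_v(q_0)$ up to the appropriate sign bookkeeping; more precisely, for $k\le j\le 0$ one gets $Q_v(q_j)<m_v^{\,j-k}Q_v(q_k)$, so $Q_v(q_k)>m_v^{\,k-j}Q_v(q_j)$, and in particular, since everything lives in the bounded set $N$, $Q_v(q_k)$ is bounded above by a constant times $m_v^{k}$. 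Actually the cleaner route: from $Q_v(q_{k})>m_v Q_v(q_{k-1})$ we get $Q_v(q_{k-1})<m_v^{-1}Q_v(q_k)$, and iterating downward, $Q_v(q_{k})< m_v^{\,k}\,\sup_{q\in N}Q_v(q)\le m_v^{\,k}\cdot 1$ once we check $\sup_N Q_v\le 1$ (indeed $Q_v(x,y)=\|x\|^2-\alpha_v\|y\|^2\le\|x\|^2\le 1$ on $N$). Wait — the direction of iteration needs care since we are going backward in time; I would iterate the inequality $Q_v(q_{j-1})<m_v^{-1}Q_v(q_j)$ starting from $j=0$ down to $j=k+1$, obtaining $Q_v(q_k)<m_v^{k}Q_v(q_0)\le m_v^{k}$.

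Next I would convert this into a norm bound. We are given $Q_h(q_k)\ge 0$, i.e. $\alpha_h\|\pi_x q_k\|^2\ge\|\pi_y q_k\|^2$, hence $\|\pi_y q_k\|^2\le\alpha_h\|\pi_x q_k\|^2$. Combining with $Q_v(q_k)=\|\pi_x q_k\|^2-\alpha_v\|\pi_y q_k\|^2< m_v^{k}$, we get
\[
\|\pi_x q_k\|^2 < m_v^{k}+\alpha_v\|\pi_y q_k\|^2 \le m_v^{k}+\alpha_v\alpha_h\|\pi_x q_k\|^2,
\]
so $(1-\alpha_v\alpha_h)\|\pi_x q_k\|^2< m_v^{k}$, giving $\|\pi_x q_k\|^2<(1-\alpha_v\alpha_h)^{-1}m_v^{k}$. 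Then $\|q_k\|^2=\|\pi_x q_k\|^2+\|\pi_y q_k\|^2\le(1+\alpha_h)\|\pi_x q_k\|^2$; but since $\alpha_h<1$ this is at most $2\|\pi_x q_k\|^2<2(1-\alpha_v\alpha_h)^{-1}m_v^{k}$, which is exactly $C^2 (\sqrt{m_v})^{2k}$ with $C=\sqrt{2(1-\alpha_v\alpha_h)^{-1}}$. Taking square roots finishes it.

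\textbf{Main obstacle.} The only delicate points are bookkeeping ones: getting the direction of the iterated inequality right for a \emph{backward} trajectory (we push the estimate from index $0$ down to index $k<0$, using $m_v>1$ so that $m_v^{-1}<1$), and handling the degenerate case where some $q_{k-1}=0$ so that the strict cone inequality cannot be invoked with $p_1=p_2$; in that case all earlier points vanish and the bound is trivial. The genuinely "analytic" content — that $Q_v$ contracts under backward iteration and that nonnegativity of $Q_h$ lets one recover an honest norm bound — is short once the constants $\alpha_h,\alpha_v\in(0,1)$ are used to absorb cross terms. I do not expect any step to require more than elementary manipulation of these quadratic forms.
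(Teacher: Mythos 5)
Your argument matches the paper's proof in all essentials: you iterate the $(Q_v,m_v)$ cone condition against the fixed point $0$ to obtain $Q_v(q_k)\le m_v^{k}Q_v(q_0)\le m_v^{k}$ for $k\le 0$, then use $Q_h(q_k)\ge 0$ (so $\|\pi_y q_k\|^2\le\alpha_h\|\pi_x q_k\|^2$) to convert this into $\|q_k\|^2\le 2(1-\alpha_v\alpha_h)^{-1}m_v^{k}$, which is exactly what the paper does. Your worry about the degenerate case $q_{k-1}=0$ is unnecessary (and the stated resolution is slightly off, since $q_{k-1}=0$ kills the forward tail $q_{k-1},\dots,q_0$, not the backward one): simply writing the iterated inequality with $\ge$ rather than $>$ makes it hold trivially as $0\ge 0$ when $q_{k-1}=0$, with no case split required.
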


\begin{proof}
The proof is given in Appendix \ref{app:positive-cone-back-conv}.\textbf{ }
\end{proof}

\begin{lemma}
\label{lem:negative-cone-forward-conv}Assume that for a $q_{0}\in N$, for all
$k\geq0,$ $f^{k}(q_{0})\in\{Q_{v}\leq0\}\cap N.$ If $f$ satisfies cone
conditions for $\left(  Q_{h},m_{h}\right)  ,$ then for $C=\sqrt{2\left(
1-\alpha_{v}\alpha_{h}\right)  ^{-1}}$ and any $k\geq0$
\[
\left\Vert f^{k}(q_{0})\right\Vert \leq C\left(  \sqrt{m_{h}}\right)  ^{k}.
\]

\end{lemma}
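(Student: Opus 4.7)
The plan is to mirror the argument of Lemma \ref{lem:positive-cone-back-conv}, with the roles of $Q_h$ and $Q_v$ (and of forward versus backward iteration) swapped. Write $q_k = f^k(q_0) = (x_k,y_k)$. Since the fixed point $0$ lies in $N$, applying the cone condition for $(Q_h,m_h)$ with $p_1 = q_k$ and $p_2 = 0$ gives
\[
Q_h(q_{k+1}) > m_h\, Q_h(q_k) \qquad \text{for every } k \geq 0
\]
(the degenerate case $q_k = 0$ is absorbed by forward invariance of the fixed point and gives the bound trivially). Iterating yields $Q_h(q_k) > m_h^k\, Q_h(q_0)$.

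Next, I would observe that the hypothesis $q_k \in \{Q_v \leq 0\}$ pins down the sign of $Q_h$ along the orbit. Indeed, $Q_v(q_k) \leq 0$ means $\|x_k\|^2 \leq \alpha_v \|y_k\|^2$, and hence
\[
Q_h(q_k) = \alpha_h \|x_k\|^2 - \|y_k\|^2 \leq -(1-\alpha_h\alpha_v)\|y_k\|^2 \leq 0.
\]
Because $Q_h(q_k) \leq 0$, the iterated cone estimate above rewrites as $|Q_h(q_k)| \leq m_h^k\, |Q_h(q_0)|$, and the fact that $q_0 \in N$ gives $|Q_h(q_0)| = \|y_0\|^2 - \alpha_h\|x_0\|^2 \leq 1$.

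Combining these bounds produces $(1-\alpha_h\alpha_v)\|y_k\|^2 \leq |Q_h(q_k)| \leq m_h^k$, and then $\|x_k\|^2 \leq \alpha_v \|y_k\|^2$ together with $\alpha_v < 1$ gives
\[
\|q_k\|^2 = \|x_k\|^2 + \|y_k\|^2 \leq \frac{1+\alpha_v}{1-\alpha_h\alpha_v}\, m_h^k \leq \frac{2}{1-\alpha_h\alpha_v}\, m_h^k,
\]
which is the asserted inequality after taking square roots. The only subtle point — and the reason the hypothesis $f^k(q_0) \in \{Q_v \leq 0\}$ is essential rather than cosmetic — is that the cone-condition inequality $Q_h(q_{k+1}) > m_h\, Q_h(q_k)$ translates into decay of $|Q_h|$ \emph{only once we know $Q_h(q_k) \leq 0$}; otherwise the same inequality would give growth. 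Everything else amounts to quadratic-form bookkeeping analogous to what is done for Lemma \ref{lem:positive-cone-back-conv}.
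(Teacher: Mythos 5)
Your proof is correct and follows the same approach as the paper's Appendix argument: both exploit $Q_v(q_k)\leq 0$ to conclude $Q_h(q_k)\leq 0$ along the orbit, iterate the $(Q_h,m_h)$ cone condition against the fixed point $0$ to get $|Q_h(q_k)|\leq m_h^k|Q_h(q_0)|$, and then convert the $Q_h$-bound into a norm bound using $\|x_k\|^2\leq\alpha_v\|y_k\|^2$. The only cosmetic difference is in the last bookkeeping step (you use $1+\alpha_v<2$; the paper bounds $\|x_k\|^2$ and $\|y_k\|^2$ separately), but both yield exactly the same constant $C$.
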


\begin{proof}
The proof is given in Appendix \ref{app:negative-cone-forward-conv}.
\end{proof}

We now introduce a notion of a horizontal disc. Horizontal discs will be the
building blocks in our construction of the invariant manifolds.

\begin{definition}
\label{def:hd} Let $Q(x,y)=\alpha\left\Vert x\right\Vert ^{2}-\beta\left\Vert
y\right\Vert ^{2}$ for $\alpha,\beta>0$. Let $h:\overline{B}_{u}%
\rightarrow\mathbb{R}^{u+s}$ be a continuous mapping. We say that $h$ is a
$Q$-horizontal disc if
\begin{align}
Q(h(x_{1})-h(x_{2}))  &  >0\text{ for any }x_{1}\neq x_{2},\label{eq:cc1}\\
\pi_{x}h(0)  &  =0. \label{eq:cc3}%
\end{align}

\end{definition}

\begin{figure}[ptb]
\begin{center}
\includegraphics[width=6cm]{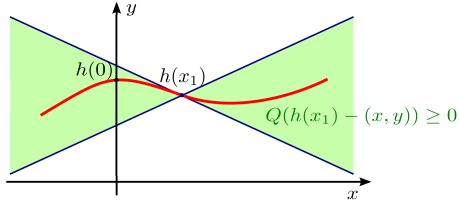}
\end{center}
\caption{A $Q$-horizontal disc $h$ in red. For any point $x_{1}\in
\overline{B}_{u}$ the disc $h$ lies within the interior of a cone attached at
$h(x_{1})$.}%
\label{fig:hQ}%
\end{figure}

\begin{definition}
\label{def:hd-in-N}We say that a $Q$-horizontal disc is in $N$ if
$h(B_{u})\subset N.$
\end{definition}

\begin{definition}
\label{def:hd-radius}Let $c>0$. We say that a $Q$-horizontal disc has radius
$c$ if%
\begin{equation}
Q(h\left(  \partial B_{u}\right)  )=c, \label{eq:cc2}%
\end{equation}

\end{definition}

\begin{figure}[ptb]
\begin{center}
\includegraphics[width=6cm]{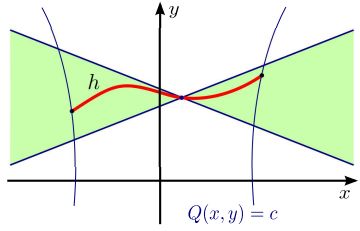}
\end{center}
\caption{A $Q$-horizontal disc $h$ with radius $c$ (in red). The image
$\partial B_{u}$ is contained in the set $\left\{  Q=c\right\}  $.}%
\label{fig:hQc}%
\end{figure}

Let $Q(x,y)=\alpha\left\Vert x\right\Vert ^{2}-\beta\left\Vert y\right\Vert
^{2}$ for $\alpha,\beta>0$. The following lemmas are consequences of
Definition \ref{def:hd}.

\begin{lemma}
\label{lem:h-injective}If $h$ is a $Q$-horizontal disc, then $\pi_{x}\circ h$
is bijective onto its image.
\end{lemma}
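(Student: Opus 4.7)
The plan is to argue by contradiction, using the defining cone inequality of a $Q$-horizontal disc. Suppose $\pi_x \circ h$ fails to be injective, so there exist distinct $x_1, x_2 \in \overline{B}_u$ with $\pi_x h(x_1) = \pi_x h(x_2)$. Write $v = h(x_1) - h(x_2)$. Then $\pi_x v = 0$, hence
\[
Q(v) = \alpha \lVert \pi_x v \rVert^2 - \beta \lVert \pi_y v \rVert^2 = -\beta \lVert \pi_y v \rVert^2 \leq 0.
\]
This directly contradicts the horizontality condition \eqref{eq:cc1}, which forces $Q(h(x_1) - h(x_2)) > 0$ whenever $x_1 \neq x_2$. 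Therefore $\pi_x h(x_1) \neq \pi_x h(x_2)$, giving injectivity; bijectivity onto the image is then automatic.

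There is no real obstacle here: the lemma is essentially a restatement of the cone condition in the special case where the $x$-difference vanishes. The one small subtlety worth noting is that we do not need any continuity or topological argument (unlike, say, showing the image covers a full horizontal ball), only the strict positivity of $Q$ on differences of distinct points, together with $\alpha, \beta > 0$.
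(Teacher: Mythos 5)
Your proof is correct and is essentially identical to the paper's: both show that $\pi_x h(x_1) = \pi_x h(x_2)$ forces $Q(h(x_1)-h(x_2)) \le 0$, which the cone condition \eqref{eq:cc1} rules out unless $x_1 = x_2$. The only difference is cosmetic (you phrase it as a contradiction, the paper as a contrapositive).
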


\begin{proof}
Take any $x_{1},x_{2}\in\overline{B}_{u}$ and suppose that $\pi_{x}%
(h(x_{1}))=\pi_{x}(h(x_{2}))$. Then
\[
Q(h(x_{1})-h(x_{2}))=-\beta ||\pi_{y}(h(x_{1}))-\pi_{y}(h(x_{2}))||^{2}\leq0
\]
The condition (\ref{eq:cc1}) implies that $x_{1}=x_{2}$. It means that
$\pi_{x}\circ h$ is injective, and as a consequence it is bijective onto its image.
\end{proof}

\begin{lemma}
\label{lem:hor-disc-proj}If $h$ is a $Q$-horizontal disc of radius $c$, then
for any $x^{\ast}\in\overline{B}_{u}\left(  0,\sqrt{\frac{c}{\alpha}}\right)
,$ there exists a unique $x$ such that $\pi_{x}h(x)=x^{\ast}$.
\end{lemma}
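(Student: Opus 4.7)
The plan is to split the claim into uniqueness and existence. Uniqueness is immediate from Lemma \ref{lem:h-injective}: if $\pi_x h(x_1) = \pi_x h(x_2) = x^\ast$, then injectivity of $\pi_x \circ h$ forces $x_1 = x_2$. So all the real work lies in existence.

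For existence I would first unpack the radius-$c$ condition. Since $h$ is a $Q$-horizontal disc of radius $c$, every $x \in \partial B_u$ satisfies $Q(h(x)) = c$, i.e.
$$\alpha \|\pi_x h(x)\|^2 \;=\; c + \beta \|\pi_y h(x)\|^2 \;\geq\; c,$$
which gives the crucial boundary estimate $\|\pi_x h(x)\| \geq \sqrt{c/\alpha}$ on $\partial B_u$, while $\pi_x h(0) = 0$ by \eqref{eq:cc3}. Set $F := \pi_x h : \overline{B}_u \to \mathbb{R}^u$; the goal is to prove $F(\overline{B}_u) \supset \overline{B}_u(0, \sqrt{c/\alpha})$.

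The argument I would use is via Brouwer degree. Fix first $x^\ast$ with $\|x^\ast\| < \sqrt{c/\alpha}$ and consider the straight-line homotopy $H_t(x) = F(x) - t x^\ast$ for $t \in [0,1]$. On $\partial B_u$ one has $\|H_t(x)\| \geq \sqrt{c/\alpha} - \|x^\ast\| > 0$, so $H_t$ does not vanish on the boundary and $\deg(H_t, B_u, 0)$ is constant in $t$. Hence $\deg(F - x^\ast, B_u, 0) = \deg(F, B_u, 0)$. Since $F$ is continuous and injective on $\overline{B}_u$ with $F(0)=0$, invariance of domain yields that $F$ is a homeomorphism onto its image and $\deg(F, B_u, 0) = \pm 1$, so $F-x^\ast$ has a zero in $B_u$. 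For $x^\ast$ on the sphere $\|x^\ast\| = \sqrt{c/\alpha}$ I would approximate by interior points $x_n^\ast \to x^\ast$, take preimages $x_n \in \overline{B}_u$, and extract a convergent subsequence using compactness of $\overline{B}_u$ and continuity of $F$.

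The main obstacle is the nonvanishing of the degree, which really rests on invariance of domain. A more hands-on alternative I would keep in reserve is a Jordan--Brouwer separation argument: $F(\partial B_u)$ is a topological sphere in $\mathbb{R}^u$ and therefore separates $\mathbb{R}^u$ into two components, the bounded one being $F(B_u)$ (which contains $0$); because $F(\partial B_u) \cap \overline{B}_u(0,\sqrt{c/\alpha}) = \emptyset$, the connected ball $\overline{B}_u(0,\sqrt{c/\alpha})$ cannot cross $F(\partial B_u)$ and hence lies entirely in the bounded component, i.e.\ in $F(\overline{B}_u)$. Either way, the geometric content is that the boundary of the disc is pushed outside a ball of radius $\sqrt{c/\alpha}$ in the $x$-direction, and injectivity prevents the disc from ``folding back'' so as to miss any point of that ball.
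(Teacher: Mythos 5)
Your argument is correct, and both of the routes you sketch reach the goal. The paper's proof is closest to (but lighter than) your Jordan--Brouwer ``reserve'' argument: it notes that continuity plus the injectivity of $\pi_x h$ (your Lemma~\ref{lem:h-injective}) make $\pi_x h(B_u)$ an open set by invariance of domain, observes the same boundary estimate $\|\pi_x h(x)\|\ge\sqrt{c/\alpha}$ on $\partial B_u$, and then uses a bare connectedness dichotomy: since $\partial\bigl[\pi_x h(B_u)\bigr]\subset\pi_x h(\partial B_u)$ misses $B_u(0,\sqrt{c/\alpha})$, that ball is partitioned into the relatively open pieces $B_u(0,\sqrt{c/\alpha})\cap\pi_x h(B_u)$ and $B_u(0,\sqrt{c/\alpha})\setminus\overline{\pi_x h(B_u)}$, so one of them is empty; because $0=\pi_x h(0)$ lies in the first, the ball sits inside the image. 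This avoids both Brouwer degree and the Jordan--Brouwer separation theorem, using only invariance of domain plus connectedness. Your primary degree-theoretic route is a heavier but equally valid packaging of the same geometric content (boundary pushed outside the ball, injectivity forcing degree $\pm 1$), and your homotopy-plus-approximation step for $\|x^\ast\|=\sqrt{c/\alpha}$ is exactly the closure step the paper performs via continuity of $h$. One small stylistic difference: the paper re-derives uniqueness from the cone inequality at the end rather than citing Lemma~\ref{lem:h-injective}, whereas your appeal to the lemma is cleaner.
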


\begin{proof}
By definition, $h$ is continuous. By Lemma \ref{lem:h-injective}, $\pi
_{x}h:\overline{B}_{u}\rightarrow\mathbb{R}^{u}$ is injective. This means that
$\pi_{x}h\left(  B_{u}\right)  $ is homeomorphic to a ball in $\mathbb{R}^{u}$.

For any $x\in\partial B_{u}$ 
\[
c=Q(h(x))=\alpha\left\Vert \pi_{x}h(x)\right\Vert ^{2}-\beta\left\Vert \pi
_{y}h(x)\right\Vert ^{2}\leq\alpha\left\Vert \pi_{x}h(x)\right\Vert ^{2},
\]
hence $\left\Vert \pi_xh(x)\right\Vert \geq\sqrt{\frac{c}{\alpha}}.$ This
means that $\partial\left[  \pi_xh\left(  B_u\right)  \right]  \cap B_u\left(
0,\sqrt{\frac{c}{\alpha}}\right)  =\emptyset,$ hence either 
\[
\pi_{x}h\left(  B_{u}\right)  \cap B_{u}\left(  0,\sqrt{\frac{c}{\alpha}%
}\right)  =\emptyset,
\]
or%
\begin{equation}
B_{u}\left(  0,\sqrt{\frac{c}{\alpha}}\right)  \subset\pi_{x}h\left(
B_{u}\right)  .\label{eq:Bh-in-ball}%
\end{equation}
Since $\pi_{x}h(0)=0\in B_{u}\left(  0,\sqrt{\frac{c}{\alpha}}\right)  ,$ we
see that (\ref{eq:Bh-in-ball}) must be the case. From (\ref{eq:Bh-in-ball}),
by continuity of $h$,
\[
\overline{B}_{u}\left(  0,\sqrt{\frac{c}{\alpha}}\right)  \subset\pi
_{x}h\left(  \overline{B}_{u}\right)  .
\]
We have thus shown that for any $x^{\ast}$ there exists an $x$ such that
$\pi_{x}h(x)=x^{\ast}$. Such point needs to be unique since for $x_{1}\neq
x_{2}$
\begin{align*}
0 &  <\frac{1}{\alpha}Q\left(  h(x_{1})-h(x_{2})\right)  \\
&  =\left\Vert \pi_{x}h(x_{1})-\pi_{x}h(x_{2})\right\Vert ^{2}-\frac{\beta
}{\alpha}\left\Vert \pi_{y}h(x_{1})-\pi_{y}h(x_{2})\right\Vert ^{2}\\
&  \leq\left\Vert \pi_{x}h(x_{1})-\pi_{x}h(x_{2})\right\Vert ^{2}.
\end{align*}

\end{proof}

Let $Q(x,y)=\alpha\left\Vert x\right\Vert ^{2}-\beta\left\Vert y\right\Vert
^{2}$ for $\alpha,\beta>0$, and let $c^{\ast}>0$. In the
following arguments we shall use the function\textbf{ }$\phi:\mathbb{R}%
^{u}\times\mathbb{R}^{s}\rightarrow\mathbb{R}^{u}\times\mathbb{R}^{s}$\textbf{
}%
\begin{equation}
\phi(u,s)=\left\{
\begin{array}
[c]{ll}%
\left(  u\sqrt{\frac{1}{\alpha}\left(  c^{\ast}+\beta\left\Vert s\right\Vert
^{2}\right)  },s\right)   & \qquad\text{if }\left\Vert u\right\Vert \leq1,\\
\left(  u\left[  \frac{1}{\left\Vert u\right\Vert }\left(  \sqrt{\frac
{1}{\alpha}\left(  c^{\ast}+\beta\left\Vert s\right\Vert ^{2}\right)
}-1\right)  +1\right]  ,s\right)   & \qquad\text{if }\left\Vert u\right\Vert
>1,
\end{array}
\right.  \label{eq:phi-def}%
\end{equation}
which will be used as a suitable change of coordinates. (Note that $\phi$ is
continuous.) The choice of $\phi$ is motivated by the fact that $\{\phi
(u,s):\left\Vert u\right\Vert \leq1\}=\left\{  Q\leq c^{\ast}\right\}  $.
Thus, we can say that $\phi$ \textquotedblleft straightens out" $\left\{
Q\leq c^{\ast}\right\}  $ (see Figure \ref{fig:fhphi}). We now give a
technical lemma.\begin{figure}[ptb]
\begin{center}
\includegraphics[width=11cm]{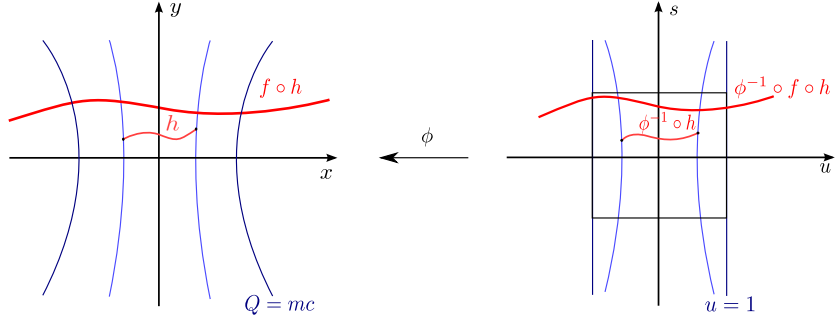}
\end{center}
\caption{The change of coordinates $\phi$ applied to a $Q$-horizontal disc $h$
and to $f\circ h$.}%
\label{fig:fhphi}%
\end{figure}

\begin{lemma}
\label{lem:phi-prop}If $s_{1},s_{2}\in\mathbb{R}^{s}$ and $u\in\mathbb{R}^{u}$
then%
\[
Q\left(  \phi(u,s_{1})-\phi(u,s_{2})\right)  \leq0.
\]

\end{lemma}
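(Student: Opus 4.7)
The plan is to note two structural features of $\phi$ and then reduce the inequality to a single scalar estimate. First, in both branches of (\ref{eq:phi-def}) we have $\pi_y\phi(u,s)=s$, so the $y$-component of $\phi(u,s_1)-\phi(u,s_2)$ is simply $s_1-s_2$ and contributes $-\beta\|s_1-s_2\|^2$ to $Q$. Write $r(s):=\sqrt{\alpha^{-1}(c^{\ast}+\beta\|s\|^2)}$, so that the two branches read $\pi_x\phi(u,s)=r(s)u$ when $\|u\|\le1$ and $\pi_x\phi(u,s)=u+\frac{u}{\|u\|}(r(s)-1)$ when $\|u\|>1$. Subtracting the two values of $\phi(u,\cdot)$ (with the same $u$) collapses the $u$ or $u/\|u\|$ prefactor to a common one, giving
\[
\pi_x\phi(u,s_1)-\pi_x\phi(u,s_2)=
\begin{cases}
(r(s_1)-r(s_2))\,u & \|u\|\le 1,\\
(r(s_1)-r(s_2))\,u/\|u\| & \|u\|>1.
\end{cases}
\]
In either case $\|\pi_x\phi(u,s_1)-\pi_x\phi(u,s_2)\|\le |r(s_1)-r(s_2)|$ (using $\|u\|\le 1$ in the first case, identity in the second).

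It therefore suffices to establish the single scalar estimate
\[
\alpha\,(r(s_1)-r(s_2))^2 \;\le\; \beta\,\|s_1-s_2\|^2.
\]
The plan for this is straightforward algebra. From $r(s)^2=\frac{c^{\ast}}{\alpha}+\frac{\beta}{\alpha}\|s\|^2$ one gets $r(s_1)^2-r(s_2)^2=\frac{\beta}{\alpha}(\|s_1\|^2-\|s_2\|^2)$, i.e.
\[
(r(s_1)-r(s_2))(r(s_1)+r(s_2))=\tfrac{\beta}{\alpha}(\|s_1\|-\|s_2\|)(\|s_1\|+\|s_2\|).
\]
Since $r(s)\ge\sqrt{\beta/\alpha}\,\|s\|$, we have $r(s_1)+r(s_2)\ge\sqrt{\beta/\alpha}(\|s_1\|+\|s_2\|)$; provided $\|s_1\|+\|s_2\|>0$, dividing gives $|r(s_1)-r(s_2)|\le\sqrt{\beta/\alpha}\,\bigl|\|s_1\|-\|s_2\|\bigr|$, and the degenerate case $s_1=s_2=0$ is trivial. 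Combining with the reverse triangle inequality $\bigl|\|s_1\|-\|s_2\|\bigr|\le\|s_1-s_2\|$ yields the required scalar bound.

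Putting the pieces together,
\[
Q(\phi(u,s_1)-\phi(u,s_2))\le\alpha(r(s_1)-r(s_2))^2-\beta\|s_1-s_2\|^2\le 0.
\]
The only mildly delicate point is handling the two branches of $\phi$ uniformly; the key simplification is that the case split disappears once one observes that in both cases the $x$-difference is a scalar multiple of the fixed unit direction $u/\|u\|$ with scalar $r(s_1)-r(s_2)$ (adjusted by $\|u\|$ in the first branch, which only helps the inequality). Everything else reduces to the elementary comparison between $r(s_1)-r(s_2)$ and $\|s_1\|-\|s_2\|$ outlined above.
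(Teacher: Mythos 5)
Your proof is correct and follows essentially the same route as the paper: reduce the $x$-component bound to the Lipschitz property of $r(s)=\sqrt{\alpha^{-1}(c^{\ast}+\beta\|s\|^2)}$ (the paper simply cites that $x\mapsto\sqrt{b+a\|x\|^2}$ is $\sqrt{a}$-Lipschitz, whereas you derive it from the conjugate identity and the reverse triangle inequality), then combine with the identity $\pi_y\phi(u,s)=s$. The only added value in your version is that you make the two branches of $\phi$ and the unit-vector collapse explicit, which the paper leaves to the reader.
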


\begin{proof}
For $a,b>0$
\[
x\rightarrow\sqrt{b+a\left\Vert x\right\Vert ^{2}}%
\]
is Lipschitz with constant $\sqrt{a},$ thus
\begin{align}
\left\Vert \pi_{x}\left(  \phi(u,s_{1})-\phi(u,s_{2})\right)  \right\Vert  &
\leq\sqrt{\frac{\beta}{\alpha}}\left\Vert s_{1}-s_{2}\right\Vert
,\label{eq:lip-phix}\\
\left\Vert \pi_{y}\left(  \phi(u,s_{1})-\phi(u,s_{2})\right)  \right\Vert  &
=\left\Vert s_{1}-s_{2}\right\Vert .\nonumber
\end{align}
This gives that for any $s_{1},s_{2}$%
\begin{equation}
Q\left(  \phi(u,s_{1})-\phi(u,s_{2})\right)  \leq\alpha\left(  \sqrt
{\frac{\beta}{\alpha}}\left\Vert s_{1}-s_{2}\right\Vert \right)  ^{2}%
-\beta\left\Vert s_{1}-s_{2}\right\Vert ^{2}=0,\label{eq:phi-cone-prop}%
\end{equation}
as required.
\end{proof}

The following lemma is a key result that will be used in our construction of
the manifolds.

\begin{lemma}
\label{lem:hor-graph-transform}Let $Q(x,y)=\alpha\left\Vert x\right\Vert
^{2}-\beta\left\Vert y\right\Vert ^{2}$ for $\alpha,\beta>0$. Let
$h:\overline{B}_{u}\rightarrow N$ be a $Q$-horizontal disc in $N$ of radius
$c>0.$ Let $m>0.$ If $f$ satisfies cone conditions for $(Q,m),$ then for any
$c^{\ast}\in(0,mc]$ there exists a $Q$-horizontal disc $h^{\ast}%
:\overline{B}_{u}\rightarrow\mathbb{R}^{u+s}$ of radius $c^{\ast}$, such that
\begin{equation}
h^{\ast}(\overline{B}_{u})=f\circ h(\overline{B}_{u})\cap\{Q\leq c^{\ast}\},
\label{eq:fh-cond}%
\end{equation}
and%
\begin{equation}
\pi_{u}\phi^{-1}(h^{\ast}(u))=u. \label{eq:h-star-on-u}%
\end{equation}

\end{lemma}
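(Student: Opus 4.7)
The plan is to parametrize $f\circ h(\overline{B}_{u})\cap\{Q\leq c^{\ast}\}$ via the straightening map $\phi$. Writing $F=f\circ h$ and
\[
\Psi(x)=\pi_{u}\phi^{-1}(F(x)),\qquad x\in\overline{B}_{u},
\]
I will show that $\Psi$ restricts to a homeomorphism from $\Psi^{-1}(\overline{B}_{u})$ onto $\overline{B}_{u}$, and then set $h^{\ast}(u)=F(\Psi^{-1}(u))$.

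For injectivity of $\Psi$ on all of $\overline{B}_{u}$: if $\Psi(x_{1})=\Psi(x_{2})$ with $x_{1}\neq x_{2}$, writing $\phi^{-1}(F(x_{i}))=(u,s_{i})$ with common first coordinate yields $F(x_{1})-F(x_{2})=\phi(u,s_{1})-\phi(u,s_{2})$, so $Q(F(x_{1})-F(x_{2}))\leq 0$ by Lemma \ref{lem:phi-prop}. This contradicts the cone-condition estimate $Q(F(x_{1})-F(x_{2}))>mQ(h(x_{1})-h(x_{2}))>0$, the last inequality coming from $h$ being a $Q$-horizontal disc. For the boundary estimate, the radius condition gives $Q(h(x))=c$, and applying the cone condition at $p_{1}=h(x),\,p_{2}=0$ yields $Q(F(x))>mc\geq c^{\ast}$ for $x\in\partial B_{u}$; since $\phi$ carries $\overline{B}_{u}\times\mathbb{R}^{s}$ onto $\{Q\leq c^{\ast}\}$, this forces $\|\Psi(x)\|>1$ there.

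The main obstacle is the topological assertion $\overline{B}_{u}\subset\Psi(\overline{B}_{u})$. Brouwer invariance of domain makes $\Psi$ a homeomorphism of $\overline{B}_{u}$ onto a topological closed $u$-ball $\Psi(\overline{B}_{u})$ whose boundary $\Psi(\partial B_{u})$ is disjoint from $\overline{B}_{u}$. A connectedness argument, combined with disjointness of the two boundaries, reduces matters to the dichotomy: either $\overline{B}_{u}\cap\Psi(\overline{B}_{u})=\emptyset$ or $\overline{B}_{u}\subset\Psi(\overline{B}_{u})$. The delicate work is in ruling out the first alternative. I would do this via a degree computation for $\Psi|_{\partial B_{u}}$ around a point of $B_{u}$, e.g.\ by constructing a homotopy of $h$ along a path of $Q$-horizontal discs to a flat disc $x\mapsto(x,0)$ passing through the fixed point, along which cone-type conditions are preserved and the corresponding $\Psi$ map is easily seen to have degree $\pm 1$.

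With surjectivity of $\Psi$ onto $\overline{B}_{u}$ in hand, set $h^{\ast}(u)=F(\Psi^{-1}(u))$. The identity $\pi_{u}\phi^{-1}(h^{\ast}(u))=u$ and the equality $h^{\ast}(\overline{B}_{u})=F(\overline{B}_{u})\cap\{Q\leq c^{\ast}\}$ are immediate from the construction. The cone-condition property $Q(h^{\ast}(u_{1})-h^{\ast}(u_{2}))>0$ for $u_{1}\neq u_{2}$ transfers from that of $F$ through $\Psi^{-1}$; the centering $\pi_{x}h^{\ast}(0)=0$ follows from $\phi(0,s)=(0,s)$; and a direct computation using $h^{\ast}(u)=\phi(u,\pi_{y}h^{\ast}(u))$ yields $Q(h^{\ast}(u))=c^{\ast}$ for $u\in\partial B_{u}$, giving the required radius.
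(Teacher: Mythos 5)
Your construction of $h^{\ast}$ from $\Psi$ and the verification of its properties are correct, and your boundary estimate and the connectedness dichotomy on $\overline{B}_{u}$ are sound; this is, at its core, the same invariance-of-domain argument the paper uses. The paper, however, avoids the degree machinery entirely: it runs the open-and-closed argument on the larger space $[0,1]\times\overline{B}_{u}$ using the homotopy $h_{\lambda}(x)=(\pi_{x}h(x),\lambda\pi_{y}h(x))$ and the map $H(\lambda,x)=(\lambda,\pi_{u}\phi^{-1}(f(h_{\lambda}(x))))$, and rules out the empty alternative simply by checking that $H(0,0)=(0,0)$, which follows from $\pi_{x}h(0)=0$, $f(0)=0$ and $\phi^{-1}(0)=0$. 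Evaluating at a single point is substantially lighter than computing a degree of $\Psi|_{\partial B_{u}}$.

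There is one genuine gap in your sketch: the homotopy you propose, ``along a path of $Q$-horizontal discs to a flat disc $x\mapsto(x,0)$,'' does not obviously exist. The natural linear interpolation
\[
h_{\lambda}(x)=\bigl((1-\lambda)\pi_{x}h(x)+\lambda x,\ (1-\lambda)\pi_{y}h(x)\bigr)
\]
need not satisfy $Q(h_{\lambda}(x_{1})-h_{\lambda}(x_{2}))>0$ for intermediate $\lambda$: since the $x$-component of $h_{\lambda}(x_{1})-h_{\lambda}(x_{2})$ is a convex combination of $\pi_{x}h(x_{1})-\pi_{x}h(x_{2})$ and $x_{1}-x_{2}$, it can degenerate when those vectors point in different directions. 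The homotopy that does work, and that the paper uses, only deforms the $y$-component to zero, keeping $\pi_{x}h$ fixed: $h_{\lambda}=(\pi_{x}h,\lambda\pi_{y}h)$. This manifestly preserves the cone inequality, stays in $N$, keeps $Q(h_{\lambda}(\partial B_{u}))\geq c$ for the boundary estimate, and at $\lambda=0$ still passes through $0$ because of the centering condition $\pi_{x}h(0)=0$. So your degree argument can be completed, but only after replacing the proposed target disc and homotopy with these.
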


\begin{figure}[ptb]
\begin{center}
\includegraphics[width=4.5cm]{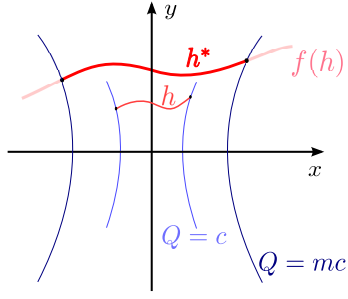}
\end{center}
\caption{$Q$-horizontal disc $h^{*}$ with radius $c^{*}=mc$ obtained as an
intersection of an image under $f$ of a $Q$-horizontal disc $h$ of radius $c$,
and the set $\{Q\leq c^{*}\}$ (see Lemma \ref{lem:hor-graph-transform}).}%
\label{fig:h_ast}%
\end{figure}

\begin{proof}
Let $h_{\lambda}(x):=(\pi_{x}h(x),\lambda\pi_{y}h(x))$ and let us define the
function%
\[
H:[0,1]\times\overline{B}_{u}\rightarrow\lbrack0,1]\times\mathbb{R}^{u},
\]%
\[
H(\lambda,x)=\left(  \lambda,\pi_{u}\phi^{-1}\left(  f(h_{\lambda}(x))\right)
\right)  .
\]

We will show that $H$ is an open map. Observe that $h_{\lambda}$ are
$Q$-horizontal discs in $N$. Let $x_{1},x_{2}\in\overline{B}_{u}$ and
$x_{1}\neq x_{2}$. By the fact that $f$ satisfies cone conditions, $Q\left(
f(h_{\lambda}(x_{1}))-f(h_{\lambda}(x_{2}))\right)  >0$, and by Lemma
\ref{lem:phi-prop} we can not have $\pi_{u}\phi^{-1}(f(h_{\lambda}%
(x_{1})))=\pi_{u}\phi^{-1}(f(h_{\lambda}(x_{1}))).$ Hence $H$ is injective. By
definition, $H$ is also continuous, thus it is an open map.

We consider the set $[0,1]\times \mathbb{R}^{u}$, with topology induced from
$\mathbb{R}\times\mathbb{R}^{u}$. Let $A=[0,1]\times B_{u}$. Note that $A$ is
open in $[0,1]\times\mathbb{R}^{u}$, that $clA=[0,1]\times\overline{B}_{u}$
and $\partial A=\left[  0,1\right]  \times\partial B_{u}$. Since $H$ is an
open map, $H(A)\cap clA$ is open in $clA.$ We will show that $H(A)\cap clA$ is
also closed in $clA$.

Take any $x\in\partial B_{u}$. Since $Q(h_{\lambda}(x))\geq Q(h(x))=c$, 
by the fact that $f$ satisfies cone conditions for $(Q,m)$
\[
Q(f(h_{\lambda}(x)))>mQ(h_{\lambda}(x))\geq mc\geq c^{\ast}.
\]
Hence $\pi_{u}\phi^{-1}\left(  f(h_{\lambda}(\partial B_{u}))\right)
\cap\overline{B}_{u}=\emptyset,$ which means that for all $\lambda\in
\lbrack0,1]$%
\begin{equation}
H\left(  \partial A\right)  \cap\mathrm{cl}A=H(\left[  0,1\right]
\times\partial B_{u})\cap\mathrm{cl}A=\emptyset. \label{eq:homotopy-1}%
\end{equation}
We thus see that $H(A)\cap\mathrm{cl}A$ is closed in $\mathrm{cl}A$.

Since $H(A)\cap\mathrm{cl}A$ is both open and closed in $\mathrm{cl}A$, we
either have%
\[
H(A)\cap\mathrm{cl}A=\mathrm{cl}A,
\]
or%
\begin{equation}
H(A)\cap\mathrm{cl}A=\emptyset. \label{eq:HA-empty}%
\end{equation}
Since $h_{\lambda=0}(0)=0,$ $f\left(  0\right)  =0\ $and $\phi^{-1}%
(0)=0$,\textbf{ }%
\[
H\left(  0,0\right)  =\left(  0,\pi_{u}\phi^{-1}\left(  f(h_{\lambda
=0}(0))\right)  \right)  =\left(  0,0\right)  \in A.
\]
We see that we can not have (\ref{eq:HA-empty}), hence $\mathrm{cl}A\subset
H(A)$. This in particular implies that $\left\{  1\right\}  \times\overline{B}_{u}\subset
H(\{1\}\times B_{u})$, hence%
\begin{equation}
\overline{B}_{u}\subset\pi_{u}\phi^{-1}\left(  f(h(\overline{B}_{u}))\right)
. \label{eq:inclusion-phi}%
\end{equation}

From (\ref{eq:inclusion-phi}) we see that for any $u\in\overline{B}_{u}$ there
exists an $x=x(u)\in B_{u},$ such that%
\begin{equation}
u=\pi_{u}\phi^{-1}\left(  f(h(x(u)))\right)  . \label{eq:xu-implicit}%
\end{equation}
We now define%
\begin{equation}
h^{\ast}(u)=f(h(x(u))). \label{eq:h-star-def}%
\end{equation}
Note that from (\ref{eq:xu-implicit}) and (\ref{eq:h-star-def}) follows
(\ref{eq:h-star-on-u}).

For $h^{\ast}$ to be well defined we need to show that the choice of $x(u)$ is
unique. Assume that for $x_{1}\neq x_{2}$ we have%
\begin{align*}
\phi^{-1}\left(  f(h(x_{1}))\right)   &  =\left(  u,s_{1}\right) \\
\phi^{-1}\left(  f(h(x_{2}))\right)   &  =\left(  u,s_{2}\right)
\end{align*}
with $s_{1}\neq s_{2}$. From Lemma \ref{lem:phi-prop} we know that%
\[
Q\left(  \phi(u,s_{1})-\phi(u,s_{2})\right)  \leq0.
\]
On the other hand,
\begin{equation}
Q\left(  \phi(u,s_{1})-\phi(u,s_{2})\right)  =Q\left(  f(h(x_{1}%
))-f(h(x_{2}))\right)  >mQ\left(  h(x_{1})-h(x_{2})\right)  >0.
\label{eq:temp-unique}%
\end{equation}
We obtain a contradiction, hence we must have $s_{1}=s_{2}.$ This shows that
$h^{\ast}$ is well defined.

We need to show that $h^{\ast}$ is a $Q$-horizontal disc of radius $c^{\ast}$.
We first show (\ref{eq:cc1}). Observe that (\ref{eq:xu-implicit}) implies that
$x(u_{1})\neq x(u_{2})$ for any $u_{1}\neq u_{2}$. From (\ref{eq:h-star-def})
and by the fact that $f$ satisfies cone conditions for $(Q,m)$
\begin{align*}
Q(h^{\ast}(x_{1})-h^{\ast}(x_{2}))  &  =Q\left(  f(h(x(u_{1})))-f(h(x(u_{2}%
)))\right) \\
&  >mQ\left(  h(x(u_{1}))-h(x(u_{2}))\right) \\
&  >0.
\end{align*}
Now we prove (\ref{eq:cc3}). From (\ref{eq:h-star-on-u}), $\phi^{-1}\left(
h^{\ast}\left(  0\right)  \right)  =\left(  0,s\right)  ,$ for some
$s\in\mathbb{R}^{s}.$ This gives
\[
\pi_{x}h^{\ast}\left(  0\right)  =0\sqrt{\frac{1}{\alpha}\left(  c^{\ast
}+\beta\left\Vert s\right\Vert ^{2}\right)  }=0.
\]
Now we prove that $Q(h^{\ast}(\partial B_{u}))=c^{\ast}.$ Assume that
$u\in\partial B_{u}.$ By (\ref{eq:h-star-on-u}) we know that%
\[
\phi^{-1}(h^{\ast}(u))=\left(  u,s\right)
\]
for some $s\in\mathbb{R}^{s}$. Since $\left\Vert u\right\Vert =1$%
\begin{align*}
Q\left(  h^{\ast}(u)\right)   &  =Q\left(  \phi\left(  u,s\right)  \right) \\
&  =Q\left(  u\sqrt{\frac{1}{\alpha}\left(  c^{\ast}+\beta\left\Vert
s\right\Vert ^{2}\right)  },s\right) \\
&  =\alpha\left\Vert u\sqrt{\frac{1}{\alpha}\left(  c^{\ast}+\beta\left\Vert
s\right\Vert ^{2}\right)  }\right\Vert ^{2}-\beta\left\Vert s\right\Vert
^{2}\\
&  =c^{\ast}.
\end{align*}

The fact that (\ref{eq:fh-cond}) holds, follows from our construction of
$h^{\ast}$.
\end{proof}

\begin{lemma}
\label{lem:unstable-tool}Assume that $h$ is a $Q_{h}$-horizontal disc in $N$.
Assume also that $h$ is a $Q_{v}$-horizontal disc of radius $c=1-\alpha_{v}$
and that $h(0)=0.$ Assume that $f$ satisfies cone conditions for $(Q_{h}%
,m_{h})$ and $(Q_{v},m_{v}),$ where $m_{v}>1$ and $m_{h}>0$. Let $h^{\ast}$ be the $Q_{v}$-horizontal disc
of radius $c^{\ast}=c$ from Lemma \ref{lem:hor-graph-transform}. Then
$h^{\ast}(0)=0,$ and $h^{\ast}$ is a $Q_{h}$-horizontal disc in $N.$
\end{lemma}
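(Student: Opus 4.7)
The plan is to use the explicit description of $h^{\ast}$ from the proof of Lemma \ref{lem:hor-graph-transform}, namely $h^{\ast}(u) = f(h(x(u)))$, where $x(u)\in\overline{B}_u$ is the unique point satisfying $\pi_u\phi^{-1}(f(h(x(u)))) = u$. Lemma \ref{lem:hor-graph-transform} already gives that $h^{\ast}$ is a $Q_v$-horizontal disc of radius $c^{\ast} = 1-\alpha_v$, so three items remain: (i)~$h^{\ast}(0) = 0$; (ii)~the $Q_h$ separation $Q_h(h^{\ast}(u_1)-h^{\ast}(u_2))>0$ for $u_1\neq u_2$; and (iii)~$h^{\ast}(\overline{B}_u)\subset N$. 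All three reduce to the cone condition for $(Q_h,m_h)$ combined with the $Q_h$-horizontal structure of $h$ already assumed.

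For (i), I would first read off from (\ref{eq:phi-def}) that $\pi_u\phi^{-1}(v,s)=0$ iff $v=0$, so the defining equation for $x(0)$ reduces to $\pi_x f(h(x))=0$. The fixed point at the origin together with $h(0)=0$ gives $f(h(0))=0$, which satisfies this equation, and uniqueness of $x(\cdot)$ established inside Lemma \ref{lem:hor-graph-transform} forces $x(0)=0$. Hence $h^{\ast}(0)=f(h(0))=0$, which in particular yields $\pi_x h^{\ast}(0)=0$, the normalization part of being a $Q_h$-horizontal disc.

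For (ii), note that $u_1\neq u_2$ forces $x(u_1)\neq x(u_2)$. Since $h$ is a $Q_h$-horizontal disc in $N$, both $h(x(u_i))$ lie in $N$ and $Q_h(h(x(u_1))-h(x(u_2)))>0$, so the cone condition for $(Q_h,m_h)$ yields
\[
Q_h(h^{\ast}(u_1)-h^{\ast}(u_2)) > m_h\, Q_h(h(x(u_1))-h(x(u_2))) > 0.
\]

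For (iii), the plan is to apply item \ref{rem:cones-setup2a} of Remark \ref{rem:cones-setup}. The bound $Q_v(h^{\ast}(u))\leq c^{\ast}=1-\alpha_v$ is immediate from (\ref{eq:fh-cond}). For the $Q_h$ lower bound, applying the cone condition for $(Q_h,m_h)$ to $h(x(u))$ and $0$, together with $f(0)=0$ and $Q_h(h(x(u)))\geq 0$ (because $h$ is $Q_h$-horizontal with $h(0)=0$), gives $Q_h(h^{\ast}(u))\geq 0 > \alpha_h-1$; item \ref{rem:cones-setup2a} of Remark \ref{rem:cones-setup} then places $h^{\ast}(u)$ in $N$. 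I do not anticipate a real obstacle here: the lemma is essentially a transfer of the $Q_h$-horizontal structure of $h$ through $f$ via the cone condition, and the only point needing care is the identification $x(0)=0$, which relies on the alignment of the fixed point with $h(0)=0$ at the origin.
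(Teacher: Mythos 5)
Your proof is correct and follows essentially the same approach as the paper's: check $h^{\ast}(0)=0$, transfer $Q_h$-horizontality through the $(Q_h,m_h)$ cone condition, and place $h^{\ast}(\overline{B}_u)$ in $N$ via Remark~\ref{rem:cones-setup}. The one cosmetic difference is in the first step, where you appeal to the uniqueness of $x(u)$ established inside the proof of Lemma~\ref{lem:hor-graph-transform}, whereas the paper argues directly from the $(Q_v,m_v)$ cone condition that $\pi_x(f\circ h(x))\neq 0$ for every $x\neq 0$, so the preimage under $f\circ h$ of the point $h^{\ast}(0)$ (whose $x$-component vanishes by the $Q_v$-disc normalization) must be $x=0$.
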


\begin{proof}
Since $h^{\ast}$ is a $Q_{v}$ horizontal disc, $\pi_{x}h^{\ast}(0)=0.$ For any
$x\neq0$%
\[
\left\Vert \pi_{x}f\circ h(x)\right\Vert ^{2}\geq Q_{v}(f\circ h(x))=Q_{v}%
(f\circ h(x)-f\circ h(0))>m_{v}Q_{v}(h(x)-h(0))>0.
\]%
Since by (\ref{eq:fh-cond}) $h^{\ast}(0)=f\circ h(x_{0})$ for some $x_{0}%
\in\overline{B}_{u},$ and since
$\pi_xh^\ast(0)=0,$ we see that $x_0=0.$ This gives%
\[
h^{\ast}(0)=f\circ h(0)=f(0)=0.
\]

Since for any $x_{1}\neq x_{2}$, $x_{1},x_{2}\in\overline{B}_{u}%
$
\[
Q_{h}(f(h(x_{1}))-f(h(x_{2})))>m_{h}Q_{h}(h(x_{1})-h(x_{2}))>0,
\]
hence by (\ref{eq:fh-cond}), $h^{\ast}$ is a $Q_{h}$-horizontal disc.

We need to show that $h^{\ast}$ is contained in $N.$ Observe that since
$h^{\ast}$ is a $Q_{h}$-horizontal disc and since $\alpha_{h}\in\left(
0,1\right)  $
\begin{equation}
Q_{h}(h^{\ast}(x))=Q_{h}(h^{\ast}(x)-h^{\ast}(0))\geq0>\alpha_{h}-1.
\label{eq:temp-h-in-N-1}%
\end{equation}
Since $h^{\ast}$ is a $Q_{v}$-horizontal disc of radius $c^{\ast}%
=c=1-\alpha_{v}$
\begin{equation}
Q_{v}(h^{\ast}(x))\leq1-\alpha_{v}. \label{eq:temp-h-in-N-2}%
\end{equation}
The fact that $h^{\ast}(x)$ is contained in N follows from
(\ref{eq:temp-h-in-N-1}), (\ref{eq:temp-h-in-N-2}) and point
\ref{rem:cones-setup2a} from Remark \ref{rem:cones-setup}.
\end{proof}

\begin{figure}[ptb]
\begin{center}
\includegraphics[width=4.5cm]{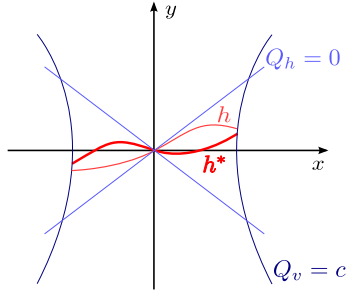}
\end{center}
\caption{$Q_{h}$-horizontal disc $h^{*}$ obtained from $h$ in terms of Lemma
\ref{lem:unstable-tool}.}%
\label{fig:mm1}%
\end{figure}

\begin{lemma}
\label{lem:stable-tool}Assume that $0<m_{h},m_{v}$ and $m_{h}<1$. Assume that
$h$ is a $Q_{v}$-horizontal disc of radius $c\leq1-\alpha_{v}$ in $N$ and that
$Q_{h}(h(\overline{B}_{u}))\geq\alpha_{h}-1.$ Assume also that $f$ satisfies
cone conditions for $(Q_{h},m_{h})$ and $(Q_{v},m_{v})$. Let $h^{\ast}$ be the
$Q_{v}$-horizontal disc of radius $c^{\ast}=\min\{m_{v}c,c\}$ from Lemma
\ref{lem:hor-graph-transform}. Then $Q_{h}(h^{\ast}(\overline{B}_{u}%
))\geq\alpha_{h}-1,$ and $h^{\ast}$ is a $Q_{v}$-horizontal disc in $N.$
\end{lemma}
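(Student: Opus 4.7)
My plan is to apply Lemma~\ref{lem:hor-graph-transform} with the data $(Q,m)=(Q_v,m_v)$, which produces a $Q_v$-horizontal disc $h^*$ of radius $c^*=\min\{m_vc,c\}$ together with the representation (\ref{eq:fh-cond}) and the normalization $\pi_u\phi^{-1}(h^*(u))=u$. That lemma already delivers the $Q_v$-horizontal disc structure of the prescribed radius, so only two things remain: the lower bound $Q_h(h^*(\overline{B}_u))\geq\alpha_h-1$, and the containment $h^*(\overline{B}_u)\subset N$.

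For the $Q_h$-bound, fix $u\in\overline{B}_u$ and use (\ref{eq:fh-cond}) to write $h^*(u)=f(h(\tilde x))$ for some $\tilde x\in\overline{B}_u$. Applying the cone condition for $(Q_h,m_h)$ to the pair $h(\tilde x)$ and $0$, together with $f(0)=0$, yields $Q_h(h^*(u))>m_h Q_h(h(\tilde x))$ whenever $h(\tilde x)\neq 0$; the case $h(\tilde x)=0$ forces $h^*(u)=0$, which is trivial. Since $Q_h(h(\tilde x))\geq\alpha_h-1$ and $m_h>0$, one gets $m_h Q_h(h(\tilde x))\geq m_h(\alpha_h-1)$, and the elementary inequality $(m_h-1)(\alpha_h-1)\geq 0$, valid because $m_h<1$ and $\alpha_h<1$, upgrades this to $m_h(\alpha_h-1)\geq\alpha_h-1$, giving the claim.

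For the containment $h^*(\overline{B}_u)\subset N$, point~\ref{rem:cones-setup2a} of Remark~\ref{rem:cones-setup} reduces the problem to verifying $Q_v(h^*(u))\leq 1-\alpha_v$ alongside the $Q_h$-bound just established. Using $\pi_u\phi^{-1}(h^*(u))=u$ and the first branch of (\ref{eq:phi-def}) (applicable since $\|u\|\leq 1$), I can write $h^*(u)=\bigl(u\sqrt{c^*+\alpha_v\|s\|^2},\,s\bigr)$ for some $s\in\mathbb{R}^s$, whence
$$Q_v(h^*(u))=\|u\|^2c^*+(\|u\|^2-1)\alpha_v\|s\|^2\leq c^*\leq c\leq 1-\alpha_v.$$
The only mildly delicate point in the whole argument is the sign bookkeeping in the $Q_h$-step: because $m_h<1$ contracts rather than expands (the situation opposite to Lemma~\ref{lem:unstable-tool}), one must check that the lower bound $\alpha_h-1\leq 0$ survives multiplication by $m_h$, which is exactly what $(m_h-1)(\alpha_h-1)\geq 0$ gives. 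Everything else is a routine unwinding of the definitions.
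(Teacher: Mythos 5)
Your proof is correct and follows essentially the same route as the paper: apply Lemma~\ref{lem:hor-graph-transform} with $(Q_v,m_v)$, establish the $Q_h$-lower bound via the cone condition against $f(0)=0$ together with $m_h(\alpha_h-1)\geq\alpha_h-1$, and conclude containment in $N$ from Remark~\ref{rem:cones-setup}(\ref{rem:cones-setup2a}). The only cosmetic difference is that for the bound $Q_v(h^*(\overline{B}_u))\leq c^*$ the paper reads it directly off the intersection representation $h^*(\overline{B}_u)=f\circ h(\overline{B}_u)\cap\{Q_v\leq c^*\}$ from (\ref{eq:fh-cond}), whereas you recompute it from the explicit form of $\phi$ via (\ref{eq:h-star-on-u}); both are fine, and your separate handling of the degenerate case $h(\tilde x)=0$ is harmless bookkeeping that the paper absorbs into a non-strict inequality.
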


\begin{proof}
Since $\alpha_{h}\in\left(  0,1\right)  ,$ $m_{h}\in(0,1)$ and $f$ satisfies
cone conditions for $(Q_{h},m_{h}),$ for any $x\in\overline{B}_{u}$%
\[
Q_{h}(f\circ h(x))\geq m_{h}Q_{h}(h(x))\geq m_{h}\left(  \alpha_{h}-1\right)
\geq\alpha_{h}-1,
\]
which by (\ref{eq:fh-cond}) proves that $Q_{h}(h^{\ast}(\overline{B}_{u}%
))\geq\alpha_{h}-1$.

The fact that $h^{\ast}$ is in $N$ follows from the facts that $Q_{h}(h^{\ast
}(\overline{B}_{u}))\geq\alpha_{h}-1$ and $Q_{v}\left(  h^{\ast}%
(\overline{B}_{u})\right)  \leq c^{\ast}=\min\{m_{v}c,c\}\leq c\leq
1-\alpha_{v}$, combined with point \ref{rem:cones-setup2a} from Remark
\ref{rem:cones-setup}. \begin{figure}[ptb]
\begin{center}
\includegraphics[width=5.5cm]{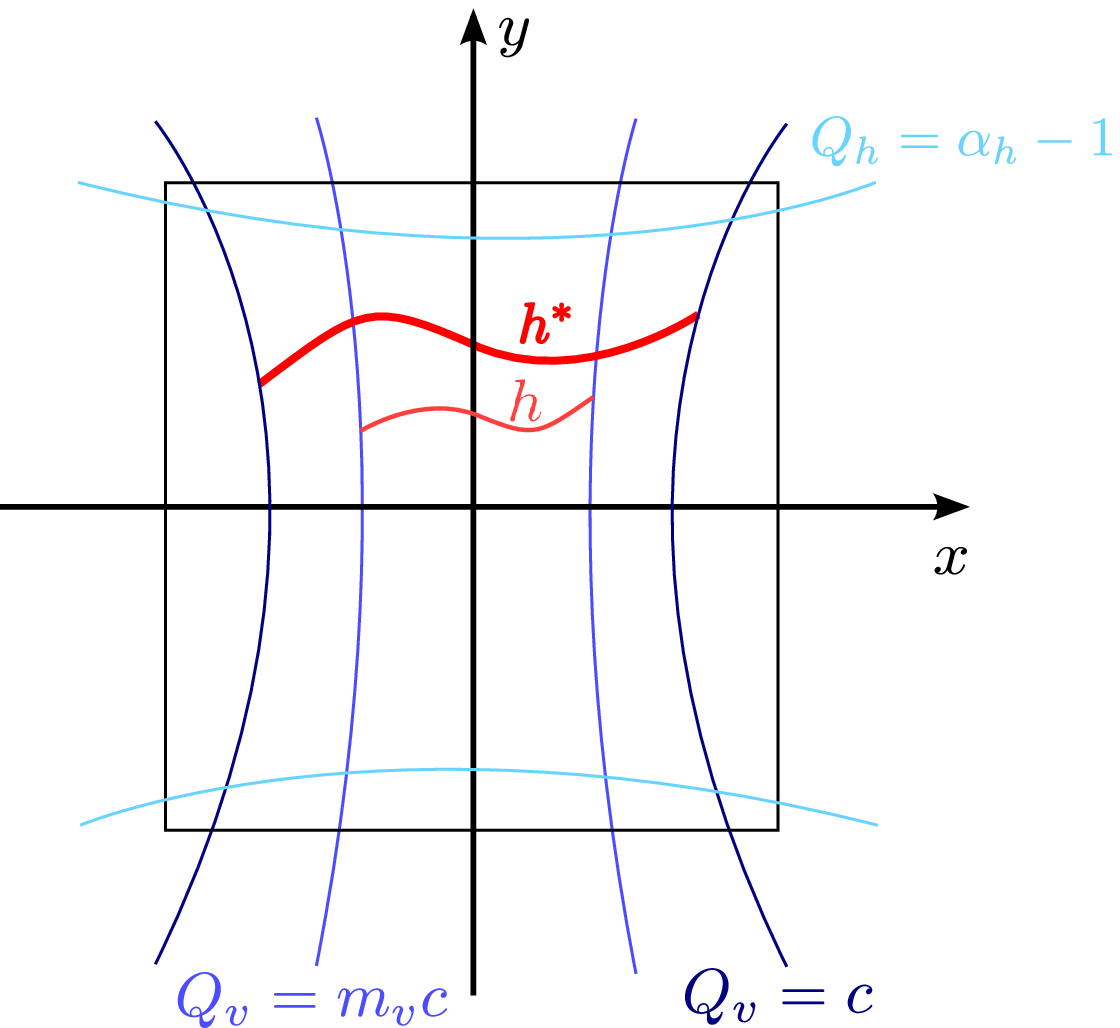}
\end{center}
\caption{$Q_{h}$-horizontal disc $h^{\ast}$ obtained from $h$ in terms of
Lemma \ref{lem:stable-tool}.}%
\label{fig:0mm}%
\end{figure}
\end{proof}

%TCIDATA{OutputFilter=latex2.dll}
%TCIDATA{Version=5.00.0.2606}
%TCIDATA{LaTeXparent=0,0,online-edit.tex}

\section{Construction of the stable and unstable
manifolds\label{sec:construction}}

In this section we give proofs of Theorems \ref{th:main-unstable},
\ref{th:main-stable}.

\subsection{Proof of Theorem \ref{th:main-unstable}}

\begin{proof}
We start by considering two points $q^{\ast},q^{\ast\ast}\in W_{\sqrt{m_{v}%
},N}^{u}$, with backward trajectories $(q_{0}^{\ast},q_{-1}^{\ast},\ldots)$
and $(q_{0}^{\ast\ast},q_{-1}^{\ast\ast},\ldots).$ We will show that:%
\begin{equation}
\pi_{x}q^{\ast}=\pi_{x}q^{\ast\ast}\implies q^{\ast}=q^{\ast\ast}.
\label{eq:q-stars-eq}%
\end{equation}
Since $q^{\ast},q^{\ast\ast}\in W_{\sqrt{m_{v}},N}^{u}$, for any $k\leq0$%
\begin{equation}
\left\Vert q_{k}^{\ast}\right\Vert \leq C\left(  \sqrt{m_{v}}\right)
^{k},\qquad\left\Vert q_{k}^{\ast\ast}\right\Vert \leq C\left(  \sqrt{m_{v}%
}\right)  ^{k}. \label{eq:temp-contraction-str}%
\end{equation}
On the other hand, since $Q_{h}\left(  q^{\ast}-q^{\ast\ast}\right)
=-\left\Vert \pi_{y}\left(  q^{\ast}-q^{\ast\ast}\right)  \right\Vert ^{2}%
\leq0,$ by $\left(  Q_{h},m_{h}\right)  $ cone conditions we see that for any
$k<0\ $\textbf{ }%
\begin{align}
0  &  \geq Q_{h}\left(  q^{\ast}-q^{\ast\ast}\right) \nonumber\\
&  =Q_{h}\left(  q_{0}^{\ast}-q_{0}^{\ast\ast}\right) \nonumber\\
&  =Q_{h}\left(  f(q_{-1}^{\ast})-f(q_{-1}^{\ast\ast})\right)
\label{eq:temp-contr-zero}\\
&  \geq m_{h}Q_{h}\left(  q_{-1}^{\ast}-q_{-1}^{\ast\ast}\right) \nonumber\\
&  \geq\ldots\nonumber\\
&  \geq m_{h}^{\left\vert k\right\vert }Q_{h}\left(  q_{k}^{\ast}-q_{k}%
^{\ast\ast}\right)  .\nonumber
\end{align}
This implies that for $k\leq0$%
\begin{equation}
\left\Vert q_{k}^{\ast}-q_{k}^{\ast\ast}\right\Vert ^{2}\geq\left\vert
Q_{h}\left(  q_{k}^{\ast}-q_{k}^{\ast\ast}\right)  \right\vert \geq m_{h}%
^{k}\left\vert Q_{h}\left(  q^{\ast}-q^{\ast\ast}\right)  \right\vert \geq0.
\label{eq:temp-exp-str}%
\end{equation}
Since $m_{v}>m_{h},$ (\ref{eq:temp-contraction-str}) and
(\ref{eq:temp-exp-str}) implies that $q^{\ast}=q^{\ast\ast},$ which proves
(\ref{eq:q-stars-eq}).

We now move to the construction of the function $w^{u}$ from Theorem
\ref{th:main-unstable}. Let us define a mapping $h_{0}:\overline{B_{u}%
}\rightarrow N$, as $h_{0}(x)=(x\sqrt{1-\alpha_{v}},0)$. Then $h_{0}$ is a
$Q_{h}$-horizontal disc in $N$ and $Q_{v}$-horizontal disk of radius
$c=1-\alpha_{v}$. Moreover, for any $x\in\overline{B_{u}}$
\[
Q_{h}\left(  h_{0}(x)\right)  =\alpha_{h}||x\sqrt{1-\alpha_{v}}||^{2}%
\geq0>\alpha_{h}-1,
\]
which means that assumptions of Lemma \ref{lem:unstable-tool} are satisfied.
Applying inductively Lemma \ref{lem:unstable-tool}, we obtain a sequence of
$Q_{h}$-horizontal discs in $N$, that are also $Q_{v}$-horizontal discs of
radius $c,$ which we shall denote as $h_{i}$, for $i=0,1,\ldots$. These
horizontal disks are given by $h_{i+1}=h_{i}^{\ast}$ in terms of Lemma
\ref{lem:hor-graph-transform}.

We will show that for any $x^{\ast}\in\overline{B}_{u}(0,r^{u})$ there exists
a unique point $q^{\ast}$ such that $\pi_{x}q^{\ast}=x^{\ast}$, which lies in
$W_{\sqrt{m_{v}},N}^{u}.$ By Lemma \ref{lem:hor-disc-proj}, for any $i\geq0$
there exists a point $p_{i}^{\ast}\in h_{i}(\overline{B}_{u})$ such that
$\pi_{x}p_{i}^{\ast}=x^{\ast}$. Since $N$ is compact, there exists a
convergent subsequence $p_{i_{l}}^{\ast}$ to a point $q^{\ast}(x^{\ast})$%
\begin{equation}
\lim_{l\rightarrow\infty}p_{i_{l}}^{\ast}=q^{\ast}\left(  x^{\ast}\right)  ,
\label{eq:il-seq}%
\end{equation}
with $\pi_{x}q^{\ast}(x^{\ast})=x^{\ast}$. We will show that such point is
unique, and that it lies in $W_{\sqrt{m_{v}},N}^{u}$. Such point will be the
candidate for $w^{u}(x^{\ast})=\pi_{y}q^{\ast}(x^{\ast})$.

We start by showing that there exists a backward trajectory $(q_{0}^{\ast
},q_{-1}^{\ast},\ldots)$ in $N\cap\{Q_{h}\geq0\}$ reaching $q^{\ast}(x^{\ast
})$. It is sufficient to show that for any $n\geq0$ there exists a
$q_{-n}^{\ast}$ such that $f^{k}(q_{-n}^{\ast})\in N\cap\{Q_{h}\geq0\}$ for
$k=0,\ldots,n$ and $f^{n}(q_{-n}^{\ast})=q^{\ast}(x^{\ast})$. Let $i\geq n$.
Since
\[
p_{i}^{\ast}\in h_{i}\left(  \overline{B}_{u}\right)  =f(h_{i-1}(\overline
{B}_{u}))\cap\left\{  Q_{v}\leq c\right\}  ,
\]
we see that $p_{i}^{\ast}=f(p_{i,-1}^{\ast}),$ with $p_{i,-1}^{\ast}\in
h_{i-1}(\overline{B}_{u})$. Since $h_{i-1}$ is a $Q_{h}$-horizontal disc and
since $h_{i-1}(0)=0$, we have $p_{i,-1}^{\ast}\in h_{i-1}(\overline{B}%
_{u})\subset\{Q_{h}\geq0\}$. Similarly, since%
\[
p_{i-1}^{\ast}\in h_{i-1}\left(  \overline{B}_{u}\right)  =f(h_{i-2}%
(\overline{B}_{u}))\cap\left\{  Q_{v}\leq c\right\}  ,
\]
we obtain a point $p_{i,-2}^{\ast}\in h_{i-2}(\overline{B}_{u})\subset
\{Q_{h}\geq0\}$ such that $f(p_{i,-2}^{\ast})=p_{i,-1}^{\ast}$. Proceeding
inductively we obtain a point $p_{i,-n}^{\ast}$ such that $f^{k}%
(p_{i,-n}^{\ast})\in N\cap\{Q_{h}\geq0\}$ and $f^{n}(p_{i,-n}^{\ast}%
)=p_{i}^{\ast}$. Consider now the subsequence $p_{i_{l},-n}^{\ast}$, in terms
of $l$, where $i_{l}$ is the subsequence form (\ref{eq:il-seq}). Since
$N\cap\{Q_{h}\geq0\}$ is compact, there exists a convergent subsequence
$p_{i_{l_{m}},-n}^{\ast}$ to a point $q_{-n}^{\ast}$%
\[
\lim_{m\rightarrow\infty}p_{i_{l_{m}},-n}^{\ast}=q_{-n}^{\ast}.
\]
observing that%
\begin{align*}
f^{k}(q_{-n}^{\ast}) &  =\lim_{m\rightarrow\infty}f^{k}(p_{i_{l_{m}},-n}%
^{\ast})\in N,\\
f^{n}(q_{-n}^{\ast}) &  =\lim_{m\rightarrow\infty}f^{n}(p_{i_{l_{m}},-n}%
^{\ast})=\lim_{m\rightarrow\infty}p_{i_{l_{m}}}^{\ast}=q^{\ast}(x^{\ast}),
\end{align*}
we achieve our goal of proving existence of $q_{-n}^{\ast}$. Thus, there
exists a backward trajectory in $N\cap\{Q_{h}\geq0\}$ reaching $q^{\ast
}(x^{\ast}).$

Since $q^{\ast}(x^{\ast})$ has a backward trajectory in $N\cap\{Q_{h}\geq0\}$,
by Lemma \ref{lem:positive-cone-back-conv} we see that $q^{\ast}(x^{\ast})\in
W_{\sqrt{m_{v}},N}^{u}$.

We now show that the point $q^{\ast}(x^{\ast})$ from (\ref{eq:il-seq}) is
unique. If we take another point $q^{\ast\ast}(x^{\ast})$, then both points
$q^{\ast}(x^{\ast})$ and $q^{\ast\ast}(x^{\ast})$ are in $W_{\sqrt{m_{v}}%
,N}^{u}$ and thus by (\ref{eq:q-stars-eq}) they must coincide. This means
that
\[
w^{u}(x^{\ast})=\pi_{y}q^{\ast}(x^{\ast})
\]
is well defined.

From our construction, for any $x_{1}^{\ast},x_{2}^{\ast}\in\overline{B}%
_{u}(0,r^{u})$ we have
\[
q^{\ast}(x_{1}^{\ast})=\lim_{k\rightarrow\infty}p_{1,k}^{\ast},\qquad
\text{and\qquad}q^{\ast}(x_{2}^{\ast})=\lim_{k\rightarrow\infty}p_{2,k}^{\ast
},
\]
for sequences $p_{1,k}^{\ast},p_{2,k}^{\ast}\in h_{k}(\overline{B}_{u})$.
Thus
\[
Q_{h}\left(  q^{\ast}(x_{1}^{\ast})-q^{\ast}(x_{2}^{\ast})\right)
=\lim_{k\rightarrow\infty}Q_{h}\left(  p_{1,k}^{\ast}-p_{2,k}^{\ast}\right)
\geq0.
\]
This implies that%
\[
\alpha_{h}||x_{1}^{\ast}-x_{2}^{\ast}||^{2}-||w^{u}(x_{1}^{\ast})-w^{u}%
(x_{2}^{\ast})||^{2}=Q_{h}(q^{\ast}(x_{1}^{\ast})-q^{\ast}(x_{2}^{\ast}%
))\geq0,
\]
which proves that $w^{u}$ is Lipschitz with constant $L=\sqrt{\alpha_{h}}$.
\end{proof}

\begin{remark}
\label{rem:C-bound-from-cones}In the proof the constant $C$ from Theorem
\ref{th:main-unstable} was established via Lemma
\ref{lem:positive-cone-back-conv}. In Lemma \ref{lem:positive-cone-back-conv}
we see that $C=\sqrt{2\left(  1-\alpha_{v}\alpha_{h}\right)  ^{-1}}$ depends
only on the coefficients of the cones $\alpha_{h},\alpha_{v}$.
\end{remark}

\subsection{Proof of Theorem \ref{th:main-stable}}

\begin{proof}
Let us fix $y_{0}$ such that $||y_{0}||\leq\sqrt{1-\alpha_{h}}.$ We define a
mapping $h_{0}:\overline{B_{u}}\rightarrow N$, as $h_{0}(x)=(x\sqrt
{1-\alpha_{v}(1-\left\Vert y_{0}\right\Vert ^{2})},y_{0})$. Then $h_{0}$ is a
$Q_{v}$-horizontal disk in $N$ of radius $c=1-\alpha_{v}>0$. By point
\ref{rem:cones-setup3} from Remark \ref{rem:cones-setup}, for any
$x\in\overline{B_{u}}$\textbf{ }%
\[
Q_{h}\left(  x,y_{0}\right)  \geq-\left(  \sqrt{1-\alpha_{h}}\right)
^{2}=\alpha_{h}-1,
\]
which means that assumptions of Lemma \ref{lem:stable-tool} are satisfied.
Applying inductively Lemma \ref{lem:stable-tool}, we obtain a sequence of
$Q_{v}$-horizontal discs, which we shall denote as $h_{i}$, for $i=0,1,\ldots
$. These horizontal disks are given by $h_{i+1}=h_{i}^{\ast}$ in terms of
Lemma \ref{lem:hor-graph-transform}. The $h_{i}$ are also $Q_{h}$-horizontal
discs for $i>0$.

By construction, we know that $\pi_{x}h_{i}(0)=0$. Let $x_{i}^{\ast}$ be a
point such that $f^{i}(h_{0}(x_{i}^{\ast}))=h_{i}(0)$. Since $f$ satisfies
cone conditions for $(Q_{v},m_{v})$, for any point $q$ such that
$Q_{v}(f(q))\leq0$, we must have $Q_{v}(q)\leq0$. This means that since
$Q_{v}(h_{i}(0))\leq0$, we also have $Q_{v}(f^{k}(h_{0}(x_{i}^{\ast})))\leq0$
for $k=0,\ldots,i$. Since $\overline{B}_{u}$ is compact, there exists a
convergent subsequence $x_{i_{m}}^{\ast}$ to some $x^{\ast}\in\overline{B}%
_{u}$. It means that there exists an $x^{\ast}\in\overline{B}_{u}$ such
that\textbf{ }%
\begin{equation}
f^{i}(h_{0}(x^{\ast}))\in\left\{  Q_{v}\leq0\right\}  \cap N\quad\text{for all
}i\geq0. \label{eq:temp-x*-in-N}%
\end{equation}
The point $x^{\ast}$ is a candidate for $w^{s}(y_{0}).$

We now check that $w^{s}(y_{0})$ is well defined. Suppose that\textbf{ }%
\begin{equation}
f^{i}(h_{0}(x^{\ast\ast}))\in\left\{  Q_{v}\leq0\right\}  \cap N\quad\text{for
all }i\geq0. \label{eq:temp-x**-in-N}%
\end{equation}
From Lemma \ref{lem:negative-cone-forward-conv} we know that for
$C>0,$\textbf{ }%
\begin{equation}
\left\Vert f^{i}(h_{0}(x^{\ast}))\right\Vert \leq C\left(  \sqrt{m_{h}%
}\right)  ^{i},\qquad\left\Vert f^{i}(h_{0}(x^{\ast\ast}))\right\Vert \leq
C\left(  \sqrt{m_{h}}\right)  ^{i}. \label{eq:w0-stable}%
\end{equation}
On the other hand,%
\begin{align*}
\left\Vert f^{i}(h_{0}(x^{\ast}))-f^{i}(h_{0}(x^{\ast\ast}))\right\Vert ^{2}
&  \geq Q_{v}(f^{i}(h_{0}(x^{\ast}))-f^{i}(h_{0}(x^{\ast\ast})))\\
&  \geq m_{v}^{i}Q_{h}(h_{0}(x^{\ast})-h_{0}(x^{\ast\ast}))\\
&  \geq0.
\end{align*}
Since $m_{h}<m_{v}$ above inequality and (\ref{eq:w0-stable}) imply that
$h_{0}(x^{\ast})=h_{0}(x^{\ast\ast})$.

The same argument can be used to show that any two points $p^{\ast}\neq
p^{\ast\ast}$ on the strong stable manifold $W_{\sqrt{m_{h}}}^{s}$ must
satisfy
\begin{equation}
Q_{v}\left(  p^{\ast}-p^{\ast\ast}\right)  \leq0, \label{eq:in-neg-cone}%
\end{equation}
since if this were not the case, we would have%
\[
\left\Vert f^{i}(p^{\ast})-f^{i}(p^{\ast\ast})\right\Vert ^{2}\geq Q_{v}%
(f^{i}(p^{\ast})-f^{i}(p^{\ast\ast}))>m_{v}^{i-1}Q_{h}(f(p^{\ast}%
)-f(x^{\ast\ast}))>0,
\]
contradicting contraction at the rate $\sqrt{m_{h}}$.

Observe that by (\ref{eq:w0-stable}), $w^{s}$ parameterizes the stable manifold.

It left to show that $w^{s}$ is Lipschitz with a constant $L=\sqrt{\alpha_{v}%
}$. By (\ref{eq:in-neg-cone})
\[
0\geq Q_{v}((w^{s}(y_{1}),y_{1})-(w^{s}(y_{2}),y_{2})),
\]
hence
\[
||w^{s}(y_{1})-w^{s}(y_{2})||^{2}\leq\alpha_{v}||y_{1}-y_{2}||,
\]
as required.
\end{proof}

\begin{remark}
In the proof, the constant $C$ from Theorem \ref{th:main-stable} was
established via Lemma \ref{lem:negative-cone-forward-conv}. In Lemma
\ref{lem:negative-cone-forward-conv} we see that $C=\sqrt{2\left(
1-\alpha_{v}\alpha_{h}\right)  ^{-1}}$ depends only on the coefficients of the
cones $\alpha_{h},\alpha_{v}$.
\end{remark}

%TCIDATA{OutputFilter=latex2.dll}
%TCIDATA{Version=5.00.0.2606}
%TCIDATA{LaTeXparent=0,0,online-edit.tex}

\section{Establishing manifolds of ODEs\label{sec:flows}}

In this section we consider an ODE%
\begin{equation}
p^{\prime}=F(p), \label{eq:ode}%
\end{equation}
with $F$ of class $C^{1}$, satisfying: for all $p\in N$
\begin{equation}
\left\Vert F\left(  p\right)  \right\Vert \leq\mu,\qquad\left\Vert
DF(p)\right\Vert \leq L, \label{eq:vect-fld-as-1}%
\end{equation}
and for any $p_{1},p_{2}\in N$%
\begin{equation}
\left\Vert DF\left(  p_{1}\right)  -DF\left(  p_{2}\right)  \right\Vert \leq
M\left\Vert p_{1}-p_{2}\right\Vert . \label{eq:vect-fld-as-2}%
\end{equation}
Let $\phi_{t}(p)$ stand for the flow induced by (\ref{eq:ode}). We assume that
zero is a fixed point.

\begin{definition}
Let $U$ be a neighborhood of zero and let $\lambda>0$. We say that a set
$W_{\lambda,U}^{u}$ consisting of all points $p$ satisfying:

\begin{enumerate}
\item $\phi_{t}(p)\in U$ for all $t\leq0;$

\item there exists a constant $C>0$ (which can depend on $p$), such that for
all $t\leq0$,%
\[
\left\Vert \phi_{t}(p)\right\Vert \leq Ce^{t\lambda};
\]

\end{enumerate}

\noindent is a strong unstable manifold with expansion rate $\lambda$ in $U$.
\end{definition}

\begin{definition}
Let $U$ be a neighborhood of zero and let $\lambda<0$. We say that a set
$W_{\lambda,U}^{s}$ consisting of all points $p$ satisfying:

\begin{enumerate}
\item $\phi_{t}(p)\in U$ for all $t\geq0;$

\item there exists a constant $C>0$ (which can depend on $p$), such that for
all $t\geq0$,%
\begin{equation}
\left\Vert \phi_{t}(p)\right\Vert \leq Ce^{t\lambda};
\label{eq:contr-rate-cond}%
\end{equation}

\end{enumerate}

\noindent is a strong stable manifold with contraction rate $\lambda$ in $U.$
\end{definition}

Let us assume that
\[
\left[  DF(N)\right]  \subset\left(
\begin{array}
[c]{cc}%
\mathbf{A} & \boldsymbol{\varepsilon}_{1}\\
\boldsymbol{\varepsilon}_{2} & \mathbf{B}%
\end{array}
\right)  ,
\]
where $\mathbf{A}$, $\mathbf{B}$, $\boldsymbol{\varepsilon}_{1}$ and
$\boldsymbol{\varepsilon}_{2}$ are interval matrices. Let $Q_{h}$ and $Q_{v}$
be as defined in (\ref{eq:cones-def1}--\ref{eq:cones-def2}). Assume that we
have two constants $c_{h},c_{v}\in\mathbb{R}$ such that for any $A\in
\mathbf{A},$ $B\in\mathbf{B}$, $\varepsilon_{1}\in\boldsymbol{\varepsilon}%
_{1}$ and $\varepsilon_{2}\in\boldsymbol{\varepsilon}_{2}$%
\begin{align}
x^{T}\left(  A-\frac{1}{2}\left(  \left\Vert \varepsilon_{1}\right\Vert
+\frac{1}{\alpha_{h}}\left\Vert \varepsilon_{2}\right\Vert \right)
\mathrm{Id}\right)  x  &  >c_{h}\left\Vert x\right\Vert ^{2}%
,\label{eq:ode-ass1}\\
x^{T}\left(  A-\frac{1}{2}\left(  \left\Vert \varepsilon_{1}\right\Vert
+\alpha_{v}\left\Vert \varepsilon_{2}\right\Vert \right)  \mathrm{Id}\right)
x  &  >c_{v}\left\Vert x\right\Vert ^{2},\label{eq:ode-ass2}\\
y^{T}\left(  B+\frac{1}{2}\left(  \left\Vert \varepsilon_{2}\right\Vert
+\alpha_{h}\left\Vert \varepsilon_{1}\right\Vert \right)  \mathrm{Id}\right)
y  &  <c_{h}\left\Vert y\right\Vert ^{2},\label{eq:ode-ass3}\\
y^{T}\left(  B+\frac{1}{2}\left(  \left\Vert \varepsilon_{2}\right\Vert
+\frac{1}{\alpha_{v}}\left\Vert \varepsilon_{1}\right\Vert \right)
\mathrm{Id}\right)  y  &  <c_{v}\left\Vert y\right\Vert ^{2}.
\label{eq:ode-ass4}%
\end{align}

\begin{theorem}
\label{th:odes-wu} Let $r^{u}=\sqrt{1-\alpha_{v}}$ and $U=\overline{B}%
_{u}(0,r^{u})\times\overline{B}_{s}.$ If $c_{v}>c_{h}$ and $c_{v}>0,$ then
there exists a function $w^{u}:\overline{B}_{u}(0,r^{u})\rightarrow
\overline{B}_{s},$ such that%
\[
W_{c_{v},N}^{u}\cap U=\left\{  (x,w^{u}(x))|x\in\overline{B}_{u}%
(0,r^{u})\right\}  .
\]
Moreover, $w^{u}$ is Lipschitz with a constant $L_{u}=\sqrt{\alpha_{h}}$.
\end{theorem}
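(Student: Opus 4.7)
\medskip

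\noindent\textbf{Proof proposal.} The plan is to reduce this continuous-time statement to the discrete Theorem~\ref{th:main-unstable} applied to the time-$h$ map $f=\phi_h$ for a suitably small $h>0$, by translating the vector-field inequalities (\ref{eq:ode-ass1})--(\ref{eq:ode-ass4}) into cone conditions for $f$ via a Gronwall-type argument.

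First, for a pair $p_1,p_2\in N$ whose trajectories remain in $N$ on $[0,h]$, set $v(t)=\phi_t(p_1)-\phi_t(p_2)$. By the fundamental theorem of calculus, $\dot v(t)=M(t)v(t)$ with
\[
M(t)=\int_0^1 DF\bigl(\phi_t(p_2)+s\,v(t)\bigr)\,ds\in [DF(N)],
\]
so $M(t)$ splits into blocks valued in $\mathbf{A},\boldsymbol{\varepsilon}_1,\boldsymbol{\varepsilon}_2,\mathbf{B}$. Writing $v=(v_x,v_y)$, I would compute
\[
\tfrac{d}{dt}Q_h(v)=2\alpha_h v_x^T A v_x-2 v_y^T B v_y+2\alpha_h v_x^T\varepsilon_1 v_y-2 v_y^T\varepsilon_2 v_x,
\]
and the analogous identity for $Q_v$. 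Bounding the mixed terms using $2\|v_x\|\|v_y\|\le\|v_x\|^2+\|v_y\|^2$, and then invoking (\ref{eq:ode-ass1}), (\ref{eq:ode-ass3}) for $Q_h$ and (\ref{eq:ode-ass2}), (\ref{eq:ode-ass4}) for $Q_v$, the weights $\tfrac{1}{\alpha_h}\|\varepsilon_2\|$, $\alpha_h\|\varepsilon_1\|$, $\alpha_v\|\varepsilon_2\|$, $\tfrac{1}{\alpha_v}\|\varepsilon_1\|$ built into those assumptions are exactly what is needed to absorb the cross-term contributions, yielding
\[
\tfrac{d}{dt}Q_h(v)>2c_h Q_h(v),\qquad \tfrac{d}{dt}Q_v(v)>2c_v Q_v(v),
\]
whenever $v(t)\neq 0$.

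Integrating these differential inequalities on $[0,h]$ (via $\tfrac{d}{dt}(Qe^{-2ct})>0$) gives
\[
Q_h(\phi_h(p_1)-\phi_h(p_2))>e^{2c_h h}Q_h(p_1-p_2),\qquad Q_v(\phi_h(p_1)-\phi_h(p_2))>e^{2c_v h}Q_v(p_1-p_2),
\]
which are precisely the cone conditions of Definition~\ref{def:f_cone_cond} for $f=\phi_h$ with $m_h:=e^{2c_h h}$, $m_v:=e^{2c_v h}$. Since $c_v>c_h$ and $c_v>0$, we have $m_v>m_h>0$ and $m_v>1$, so Theorem~\ref{th:main-unstable} applies to $\phi_h$ and produces a Lipschitz graph $w^u:\overline{B}_u(0,r^u)\to\overline{B}_s$ with constant $\sqrt{\alpha_h}$, describing $W^u_{\sqrt{m_v},N}(\phi_h)\cap U$.

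It remains to identify this discrete manifold with $W^u_{c_v,N}\cap U$ for the flow. One direction is immediate, as continuous backward decay at rate $c_v$ implies the discrete bound at times $-nh$. Conversely, given $p$ with $\|\phi_{-nh}(p)\|\le C e^{-nc_v h}$ for all $n\ge 0$, I would use $\|\phi_s(q)-q\|\le\mu|s|$ from (\ref{eq:vect-fld-as-1}) to interpolate between consecutive sampling times and recover $\|\phi_t(p)\|\le C'e^{c_v t}$ for all $t\le 0$. The principal obstacle I foresee is the premise of the Gronwall step that $\phi_t(p_i)\in N$ on $[0,h]$, which does not follow automatically for arbitrary $p_1,p_2\in N$ since either trajectory could temporarily exit $N$. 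I would handle this either by choosing $h$ small and verifying cone conditions on a slightly shrunken $N'\subset N$ (using $\|F\|\le\mu$ to bound the escape distance), or by smoothly extending $F$ beyond $N$ so that (\ref{eq:ode-ass1})--(\ref{eq:ode-ass4}) persist on an enlargement; since the resulting manifold stays inside $U\subset N$, it is insensitive to the extension.
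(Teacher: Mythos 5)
Your overall strategy matches the paper's: translate the vector-field estimates into cone conditions for a time-shift map and then invoke Theorem~\ref{th:main-unstable}. For the cone-condition step you use a clean differential-inequality argument ($\tfrac{d}{dt}Q(v)>2cQ(v)$ integrated to give $m=e^{2ch}$), whereas the paper's Lemma~\ref{lem:cc-flows} uses a mean-value-theorem expansion with an error term controlled by the Gronwall estimates of Lemma~\ref{lem:gronwell-est}, yielding $m=1+2tc$ for small $t$. Both routes hinge on exactly the same cancellation of cross terms by the weights in (\ref{eq:ode-ass1})--(\ref{eq:ode-ass4}), and your computation of the cancellation is correct; the ``trajectory might leave $N$'' technicality you flag is also implicit (and unaddressed) in the paper's Gronwall step, so I will not hold it against you.

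The genuine gap is in your identification of the discrete manifold with $W^{u}_{c_v,N}$. You work with a single time step $h$ and propose to interpolate using $\|\phi_s(q)-q\|\leq\mu|s|$. This does not establish the membership condition $\phi_t(p)\in N$ for all $t\leq 0$, which is part of the \emph{definition} of $W^{u}_{c_v,N}$: knowing $\phi_{-nh}(p)\in N$ at the sampling times does not prevent the flow from exiting $N$ between them, and the bound $\|F\|\leq\mu$ you want to interpolate with is itself only guaranteed on $N$, making the argument circular. The paper avoids this by applying Theorem~\ref{th:main-unstable} for \emph{every} time step $\tau\in(0,\tau^\ast)$, proving that all the resulting graphs $w^{u,\tau}$ coincide (this is the step comparing $\tau_1$ to $\tau_2$ using $n\tau_2=k\tau_1-\delta$), and then, for each $t<0$, choosing $n$ large enough that $\tau=|t|/n<\tau^\ast$ so that $\phi_t(p)=(\phi_\tau)^{-n}(p)\in N$ directly because $p$ lies on the $\phi_\tau$-manifold. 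The same device yields the exact expansion rate via the limit $C\bigl(1+\tfrac{2}{n}|t|c_v\bigr)^{-n/2}\to Ce^{tc_v}$ in (\ref{eq:flow-expansion}). This ``let the time step vary'' idea, together with the invariance of $w^{u,\tau}$ under changes of $\tau$, is the missing ingredient in your proposal; without it the reverse inclusion $W^{u}_{\sqrt{m_v},N}(\phi_h)\cap U\subset W^{u}_{c_v,N}\cap U$ is not established.
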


\begin{theorem}
\label{th:odes-ws} Let $r^{s}=\sqrt{1-\alpha_{h}}$ and $U=\overline{B}%
_{u}\times\overline{B}_{s}(0,r^{s}).$ If $c_{h}<c_{v}$ and $c_{h}<0$, then
there exists a function $w^{s}:\overline{B}_{s}(0,r^{s})\rightarrow
\overline{B}_{u},$ such that%
\[
W_{c_{h},N}^{s}\cap U=\left\{  (w^{s}(y),y)|y\in B_{s}(0,r^{s})\right\}  .
\]
Moreover, $w^{s}$ is Lipschitz with a constant $L_{s}=\sqrt{\alpha_{v}}.$
\end{theorem}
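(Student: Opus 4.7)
The plan is to deduce Theorem \ref{th:odes-ws} from Theorem \ref{th:main-stable} applied to the time-$h$ map $f=\phi_h$ for a sufficiently small $h>0$. The bulk of the work is to translate the infinitesimal hypotheses (\ref{eq:ode-ass1})--(\ref{eq:ode-ass4}) into cone conditions for $f$ on $N$, with constants $m_h=e^{2c_h h}$ and $m_v=e^{2c_v h}$. Once this is done, the hypotheses $c_h<c_v$ and $c_h<0$ translate to $m_h<m_v$ and $m_h<1$ for every $h>0$, and Theorem \ref{th:main-stable} returns exactly the graph $w^s$ sought.

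To verify the cone conditions, I would fix $p_1,p_2\in N$ and study $\delta(t)=\phi_t(p_1)-\phi_t(p_2)$. By the mean value theorem, $\delta'(t)=\tilde{A}(t)\delta(t)$, where $\tilde{A}(t)=\int_0^1 DF(\phi_t(p_2)+s\delta(t))\,ds$ lies in the interval hull $[DF(N)]$, provided the segment joining $\phi_t(p_1)$ and $\phi_t(p_2)$ sits inside $N$; convexity of $N$ together with the bound $\|F\|\le\mu$ makes this true for $h$ small, with any slack absorbed by working in a slightly enlarged enclosure that still obeys (\ref{eq:ode-ass1})--(\ref{eq:ode-ass4}). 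Writing $\delta=(x,y)$ and $\tilde{A}=\begin{pmatrix}A&\varepsilon_1\\ \varepsilon_2&B\end{pmatrix}$ in block form, a direct computation gives
\[
\tfrac{1}{2}\tfrac{d}{dt}Q_v(\delta)=x^T A x+\langle x,\varepsilon_1 y\rangle-\alpha_v\langle y,\varepsilon_2 x\rangle-\alpha_v y^T B y.
\]
Bounding the cross terms by $|\langle x,\varepsilon_1 y\rangle|\le\tfrac{1}{2}\|\varepsilon_1\|(\|x\|^2+\|y\|^2)$ and $\alpha_v|\langle y,\varepsilon_2 x\rangle|\le\tfrac{\alpha_v}{2}\|\varepsilon_2\|(\|x\|^2+\|y\|^2)$ and regrouping, the coefficients of $\|x\|^2$ and $\|y\|^2$ end up controlled precisely by the combinations appearing in (\ref{eq:ode-ass2}) and (\ref{eq:ode-ass4}), which yields $\tfrac{d}{dt}Q_v(\delta(t))>2c_v Q_v(\delta(t))$. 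The analogous computation for $Q_h$ uses (\ref{eq:ode-ass1}) and (\ref{eq:ode-ass3}) to produce $\tfrac{d}{dt}Q_h(\delta(t))>2c_h Q_h(\delta(t))$.

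Integrating these differential inequalities on $[0,h]$ via Gronwall yields $Q_v(f(p_1)-f(p_2))>e^{2c_v h}Q_v(p_1-p_2)$ and $Q_h(f(p_1)-f(p_2))>e^{2c_h h}Q_h(p_1-p_2)$, so $f=\phi_h$ satisfies the cone conditions of Theorem \ref{th:main-stable} with $m_h=e^{2c_h h}$ and $m_v=e^{2c_v h}$. The theorem then supplies a Lipschitz function $w^s:\overline{B}_s(0,r^s)\to\overline{B}_u$ with Lipschitz constant $\sqrt{\alpha_v}$, parameterizing $W^s_{\sqrt{m_h},N}(f)\cap U$. The last step is to identify this discrete strong stable manifold with the flow-theoretic one $W^s_{c_h,N}(\phi)$. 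The inclusion $W^s_{c_h,N}(\phi)\subset W^s_{\sqrt{m_h},N}(f)$ is immediate from $\sqrt{m_h}=e^{c_h h}$. For the reverse, I would use the elementary bound $\|\phi_s q\|\le e^{Ls}\|q\|$ (valid because $F(0)=0$ and $\|DF\|\le L$ on $N$) to interpolate between integer multiples of $h$: writing $t=nh+s$ with $s\in[0,h)$ gives $\|\phi_t p\|=\|\phi_s(f^n p)\|\le e^{(L-c_h)h}C\,e^{c_h t}$, with containment in $U$ guaranteed (after a mild shrinking of the neighborhood if necessary) by the bound $\|F\|\le\mu$.

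The main obstacle is the bookkeeping in the AM--GM step: the weights must be chosen so that the coefficients of $\|x\|^2$ and $\|y\|^2$ align precisely with the combinations $\tfrac{1}{2}(\|\varepsilon_1\|+\alpha_v\|\varepsilon_2\|)$ and $\tfrac{1}{2}(\|\varepsilon_2\|+\tfrac{1}{\alpha_v}\|\varepsilon_1\|)$ dictated by (\ref{eq:ode-ass2}) and (\ref{eq:ode-ass4}), and symmetrically for $Q_h$. Everything else---the Gronwall integration, the discrete-to-continuous manifold identification, and the Lipschitz constant $L_s=\sqrt{\alpha_v}$---is routine once those differential inequalities are in place.
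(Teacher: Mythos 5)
Your proposal follows the paper's strategy: verify cone conditions for the time-$h$ map $\phi_h$, invoke Theorem \ref{th:main-stable}, and interpolate between integer time-steps to identify the discrete strong stable manifold with the flow-theoretic one (the last step being the mirror of what the paper does in the proof of Theorem \ref{th:odes-wu}). The one genuine difference is in how the cone conditions are derived. The paper's Lemma \ref{lem:cc-flows} always evaluates $DF$ along the time-zero segment $[p_2,p_1]\subset N$ (in $N$ by convexity) and pushes the time-$t$ discrepancy into explicit error terms bounded by the Gronwall-type Lemma \ref{lem:gronwell-est}, yielding $m=1+2tc$; you instead differentiate $Q(\delta(t))$ directly, obtaining $\dot{Q}(\delta)>2c\,Q(\delta)$ and hence $m=e^{2ch}$ by integrating the differential inequality. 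Both produce $m_h<1$ and $m_v>m_h$ for small $h$ under $c_h<0<c_v-c_h$. The cost of your variant, which you do acknowledge, is that the matrices $\tilde{A}(t)$ live in $[DF(N')]$ for an enclosure $N'$ slightly larger than $N$, so conditions (\ref{eq:ode-ass1})--(\ref{eq:ode-ass4}) must be re-established on $N'$ --- which is fine since they are strict inequalities and $DF$ is continuous, but it slightly degrades $c_h,c_v$ (recovered only in the limit $h\to0$). The paper's error-term formulation sidesteps this by never leaving $N$. Your AM--GM bookkeeping matching (\ref{eq:ode-ass2}), (\ref{eq:ode-ass4}) to $Q_v$ and (\ref{eq:ode-ass1}), (\ref{eq:ode-ass3}) to $Q_h$ is correct, as is the interpolation $t=nh+s$ with $\|\phi_s q\|\le e^{Ls}\|q\|$ used in the discrete-to-flow identification. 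Overall the argument is sound and amounts to a lightly re-proved version of Lemma \ref{lem:cc-flows} plugged into the same scaffolding the paper uses.
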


\begin{remark}
Let us note that the assumptions of Theorems \ref{th:odes-wu} and
\ref{th:odes-ws} follow directly from the estimates on the vector field. There
is no need to integrate the ODE to verify them.
\end{remark}

We need some auxiliary results before we give proofs of the theorems at the
end of the section. We start with a technical lemma.

\begin{lemma}
\label{lem:gronwell-est}Assume that the vector field satisfies the conditions
(\ref{eq:vect-fld-as-1}) and (\ref{eq:vect-fld-as-2}). Then for
\begin{align*}
g_{1}(p_{1},p_{2},t) &  =\phi_{t}(p_{1})-\phi_{t}(p_{2})-\left(  p_{1}%
-p_{2}\right)  ,\\
g_{2}(p_{1},p_{2},t) &  =F(\phi_{t}(p_{1}))-F(\phi_{t}(p_{2}))-\left(
F(p_{1})-F(p_{2})\right)  ,
\end{align*}
we have the following estimates%
\begin{align}
\left\Vert g_{1}\left(  p_{1},p_{2},t\right)  \right\Vert  &  \leq\left(
e^{\left\vert t\right\vert L}-1\right)  \left\Vert p_{1}-p_{2}\right\Vert
,\label{eq:gronw-est-1}\\
\left\Vert g_{2}\left(  p_{1},p_{2},t\right)  \right\Vert  &  \leq\left(
L\left(  e^{L\left\vert t\right\vert }-1\right)  +\left\vert t\right\vert
e^{L\left\vert t\right\vert }\mu M\right)  \left\Vert p_{1}-p_{2}\right\Vert
.\label{eq:gronw-est-2}%
\end{align}

\end{lemma}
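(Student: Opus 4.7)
The plan is to handle $g_1$ by a direct Gronwall argument on the variational flow, and then derive the bound on $g_2$ by representing the two $F$-differences as line integrals of $DF$ and reducing to the already-known estimate on $g_1$.

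For $g_1$, I would set $\Delta(t)=\phi_t(p_1)-\phi_t(p_2)$, so that $\Delta'(t)=F(\phi_t(p_1))-F(\phi_t(p_2))$ and consequently $\|\Delta'(t)\|\le L\,\|\Delta(t)\|$ by (\ref{eq:vect-fld-as-1}). Gronwall's inequality then gives $\|\Delta(t)\|\le e^{L|t|}\|p_1-p_2\|$. Since $g_1(p_1,p_2,t)=\Delta(t)-\Delta(0)=\int_0^t\Delta'(s)\,ds$ (with the interval $[t,0]$ replacing $[0,t]$ when $t<0$), integrating the Gronwall bound produces
\[
\|g_1(p_1,p_2,t)\|\le\int_0^{|t|}L\,e^{Ls}\,\|p_1-p_2\|\,ds=(e^{L|t|}-1)\|p_1-p_2\|,
\]
which is (\ref{eq:gronw-est-1}).

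For $g_2$, I would represent the two inner $F$-differences by the fundamental theorem of calculus along straight segments. Set $\gamma(s,t)=(1-s)\phi_t(p_2)+s\phi_t(p_1)$ and $\gamma_0(s)=(1-s)p_2+sp_1$, so
\[
F(\phi_t(p_1))-F(\phi_t(p_2))=\int_0^1 DF(\gamma(s,t))\Delta(t)\,ds,\qquad F(p_1)-F(p_2)=\int_0^1 DF(\gamma_0(s))(p_1-p_2)\,ds.
\]
Subtracting and adding $\int_0^1 DF(\gamma(s,t))(p_1-p_2)\,ds$ lets me split $g_2$ into
\[
g_2(p_1,p_2,t)=\int_0^1 DF(\gamma(s,t))\,g_1(p_1,p_2,t)\,ds+\int_0^1\bigl[DF(\gamma(s,t))-DF(\gamma_0(s))\bigr](p_1-p_2)\,ds.
\]
The first integrand is bounded via $\|DF\|\le L$ and the $g_1$ estimate, producing $L(e^{L|t|}-1)\|p_1-p_2\|$. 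For the second, (\ref{eq:vect-fld-as-2}) yields $\|DF(\gamma(s,t))-DF(\gamma_0(s))\|\le M\|\gamma(s,t)-\gamma_0(s)\|$, and the convex-combination identity
\[
\gamma(s,t)-\gamma_0(s)=(1-s)\bigl(\phi_t(p_2)-p_2\bigr)+s\bigl(\phi_t(p_1)-p_1\bigr)
\]
combined with $\|\phi_t(p)-p\|\le|t|\mu$ (from (\ref{eq:vect-fld-as-1})) gives $\|\gamma(s,t)-\gamma_0(s)\|\le|t|\mu$. Thus the second piece is bounded by $M|t|\mu\|p_1-p_2\|\le |t|e^{L|t|}\mu M\|p_1-p_2\|$, and summing produces (\ref{eq:gronw-est-2}).

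The only subtle step is the algebraic decomposition of $g_2$: one must add and subtract the mixed term $\int_0^1 DF(\gamma(s,t))(p_1-p_2)\,ds$ so that the first piece produces exactly $g_1$ (recovering the Gronwall factor $e^{L|t|}-1$) while the second piece leaves a pure $DF$-difference that can be estimated by the Lipschitz constant $M$. Everything else reduces to routine applications of the assumptions (\ref{eq:vect-fld-as-1})--(\ref{eq:vect-fld-as-2}) and the elementary bound $\|\phi_t(p)-p\|\le|t|\mu$.
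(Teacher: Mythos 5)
Your proof is correct, and for the second estimate it takes a genuinely different route from the paper. For $g_1$ the two arguments are essentially the same modulo bookkeeping: you apply Gronwall to $\Delta(t)=\phi_t(p_1)-\phi_t(p_2)$ and then integrate $\Delta'$, while the paper applies Gronwall directly to $v(t)=\|g_1\|$ after adding and subtracting $p_1-p_2$; both yield $(e^{L|t|}-1)\|p_1-p_2\|$. For $g_2$ the paper differentiates $g_2$ in $t$, bounds $\frac{d}{ds}\left[F(\phi_s(p_1))-F(\phi_s(p_2))\right]$ by inserting the cross term $DF(\phi_s(p_1))F(\phi_s(p_2))$, and runs a \emph{second} Gronwall argument with $c(t)=\int_0^t(L^2+\mu Me^{Ls})\|p_1-p_2\|\,ds$. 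You instead avoid differentiating in $t$ entirely: you write both $F$-differences as line integrals of $DF$ along straight segments, add and subtract the mixed term $\int_0^1 DF(\gamma(s,t))(p_1-p_2)\,ds$, and reduce the first piece to the already-proved $g_1$ bound while controlling the second piece by the Lipschitz constant $M$ and the elementary inequality $\|\phi_t(p)-p\|\le|t|\mu$. This is algebraically cleaner (no second Gronwall) and in fact yields the slightly sharper constant $M|t|\mu$ in place of the paper's $M|t|\mu e^{L|t|}$, which you correctly note dominates to recover (\ref{eq:gronw-est-2}). One point worth making explicit, since your argument uses it a bit more heavily than the paper's: the line-integral representations and the bounds $\|DF(\gamma(s,t))\|\le L$, $\|\gamma(s,t)-\gamma_0(s)\|\le|t|\mu$ require the straight segments joining $\phi_t(p_1)$ to $\phi_t(p_2)$ and $p_1$ to $p_2$ to lie in $N$; this holds because $N=\overline{B}_u\times\overline{B}_s$ is convex, and both proofs tacitly assume the flow keeps the relevant points in $N$ on $[0,t]$.
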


\begin{proof}
The proof is given in Appendix \ref{app:gronwell-est}.
\end{proof}

The following lemma will be the key for the proof of Theorems \ref{th:odes-wu}
and \ref{th:odes-ws}.

\begin{lemma}
\label{lem:cc-flows}Let $Q(x,y)=\alpha\left\Vert x\right\Vert ^{2}%
-\beta\left\Vert y\right\Vert ^{2}$ with $\alpha,\beta>0$. Assume that for
$c\in\mathbb{R}$ and any $A\in\mathbf{A},$ $B\in\mathbf{B}$, $\varepsilon
_{1}\in\boldsymbol{\varepsilon}_{1}$ and $\varepsilon_{2}\in
\boldsymbol{\varepsilon}_{2}$ holds%
\begin{align}
x^{T}\left(  A-\frac{1}{2}\left(  \left\Vert \varepsilon_{1}\right\Vert
+\frac{\beta}{\alpha}\left\Vert \varepsilon_{2}\right\Vert \right)
\mathrm{Id}\right)  x  &  >c\left\Vert x\right\Vert ^{2}, \label{eq:flows-cc1}%
\\
y^{T}\left(  B+\frac{1}{2}\left(  \left\Vert \varepsilon_{2}\right\Vert
+\frac{\alpha}{\beta}\left\Vert \varepsilon_{1}\right\Vert \right)
\mathrm{Id}\right)  y  &  <c\left\Vert y\right\Vert ^{2}. \label{eq:flows-cc2}%
\end{align}
Then for sufficiently small $t>0,$ the map $\phi_{t}$ satisfies cone
conditions in $N$ for $(Q,m=1+t2c).$
\end{lemma}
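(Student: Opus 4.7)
The plan is to extract the cone condition for $\phi_{t}$ from a first-order Taylor expansion of the flow in $t$, with the remainder controlled by Lemma \ref{lem:gronwell-est}. Concretely, take distinct $p_{1},p_{2}\in N$, set $q=p_{1}-p_{2}$, and write
\[
\phi_{t}(p_{1})-\phi_{t}(p_{2})=q+t\bigl(F(p_{1})-F(p_{2})\bigr)+\int_{0}^{t}g_{2}(p_{1},p_{2},s)\,ds.
\]
By Lemma \ref{lem:gronwell-est}, the integral term is bounded in norm by $K t^{2}\|q\|$ for some constant $K$ depending only on $L$, $\mu$, $M$. Applying the componentwise mean value theorem (using convexity of $N$) lets me write $F(p_{1})-F(p_{2})=\tilde{B}q$ for a matrix $\tilde{B}\in[DF(N)]$, and splitting $\tilde{B}$ into blocks gives $\tilde{B}=\bigl(\begin{smallmatrix}A & \varepsilon_{1}\\ \varepsilon_{2} & B\end{smallmatrix}\bigr)$ with blocks lying in $\mathbf{A}$, $\boldsymbol{\varepsilon}_{1}$, $\boldsymbol{\varepsilon}_{2}$, $\mathbf{B}$ respectively.

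Next I would expand $Q\bigl(\phi_{t}(p_{1})-\phi_{t}(p_{2})\bigr)-(1+2tc)Q(q)$. Writing $q=(q_{x},q_{y})$, the linear-in-$t$ part comes out to $2t\bigl[T(q)-cQ(q)\bigr]$, where
\[
T(q)=\alpha\langle q_{x},Aq_{x}\rangle-\beta\langle q_{y},Bq_{y}\rangle+\alpha\langle q_{x},\varepsilon_{1}q_{y}\rangle-\beta\langle q_{y},\varepsilon_{2}q_{x}\rangle.
\]
Every higher-order-in-$t$ contribution (the $(t\tilde{B})^{2}q$ terms, the Lemma \ref{lem:gronwell-est} integral remainder, and their cross products with $q$) is uniformly bounded in $p_{1},p_{2}\in N$ by $C t^{2}\|q\|^{2}$, thanks to $\|\tilde{B}\|\leq L$.

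The heart of the argument is to show $T(q)-cQ(q)\geq\delta\|q\|^{2}$ with a constant $\delta>0$ that does not depend on the particular choice of $\tilde{B}\in[DF(N)]$. I would bound the two cross terms by AM-GM: $\alpha|\langle q_{x},\varepsilon_{1}q_{y}\rangle|\leq\tfrac{\alpha}{2}\|\varepsilon_{1}\|(\|q_{x}\|^{2}+\|q_{y}\|^{2})$ and analogously for the $\varepsilon_{2}$ term. Regrouping, the coefficient of $\|q_{x}\|^{2}$ combines into exactly $\alpha\cdot\tfrac{1}{2}(\|\varepsilon_{1}\|+\tfrac{\beta}{\alpha}\|\varepsilon_{2}\|)$, and the coefficient of $\|q_{y}\|^{2}$ into $\beta\cdot\tfrac{1}{2}(\|\varepsilon_{2}\|+\tfrac{\alpha}{\beta}\|\varepsilon_{1}\|)$, matching the weights built into hypotheses (\ref{eq:flows-cc1})–(\ref{eq:flows-cc2}). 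Those hypotheses then give $T(q)\geq c\alpha\|q_{x}\|^{2}-c\beta\|q_{y}\|^{2}=cQ(q)$; since the interval matrices are compact and the inequalities are strict, a standard compactness/homogeneity argument on the unit sphere upgrades this to the desired $T(q)-cQ(q)\geq\delta\|q\|^{2}$.

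Putting the pieces together, $Q(\phi_{t}(p_{1})-\phi_{t}(p_{2}))-mQ(q)\geq 2t\delta\|q\|^{2}-Ct^{2}\|q\|^{2}=t\|q\|^{2}(2\delta-Ct)$, which is strictly positive for every $p_{1}\neq p_{2}$ once $t<2\delta/C$. The only real obstacle I anticipate is bookkeeping: one must verify that symmetric AM-GM yields precisely the coefficient structure of the hypotheses (which is why the particular weights $\tfrac{\beta}{\alpha}$ and $\tfrac{\alpha}{\beta}$ were chosen in the statement), and one must carefully collect every $O(t^{2})$ contribution so that a single smallness threshold on $t$ works uniformly across $N$.
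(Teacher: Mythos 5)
Your proposal is correct and follows essentially the same route as the paper: a first-order Taylor expansion in $t$ with the $O(t^2)$ remainder controlled by Lemma \ref{lem:gronwell-est}, the mean value theorem over the convex set $N$ to write $F(p_1)-F(p_2)$ as a matrix in $[DF(N)]$ acting on $p_1-p_2$, and AM--GM on the off-diagonal cross terms to reproduce exactly the weighted coefficients in (\ref{eq:flows-cc1})--(\ref{eq:flows-cc2}). The only cosmetic difference is that the paper differentiates $Q(\phi_t(p_1)-\phi_t(p_2))$ in $t$ and applies the scalar mean value theorem in time, whereas you expand the trajectory difference directly and substitute into $Q$; you also make explicit the compactness step giving a uniform gap $\delta>0$, a point the paper leaves implicit when it asserts a single ``sufficiently small $t$'' works uniformly.
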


\begin{remark}
Conditions (\ref{eq:flows-cc1}), (\ref{eq:flows-cc2}) hold when the two
matrixes%
\begin{align*}
&  A-\frac{1}{2}\left(  \left\Vert \varepsilon_{1}\right\Vert +\frac{\beta
}{\alpha}\left\Vert \varepsilon_{2}\right\Vert +2c\right)  \mathrm{Id},\\
-  &  B+\frac{1}{2}\left(  -\left\Vert \varepsilon_{2}\right\Vert
-\frac{\alpha}{\beta}\left\Vert \varepsilon_{1}\right\Vert +2c\right)
\mathrm{Id}\mathbf{,}%
\end{align*}
are strictly positive definite. The same approach can be used to verify
(\ref{eq:ode-ass1})--(\ref{eq:ode-ass4}).
\end{remark}

\begin{proof}
(of Lemma \ref{lem:cc-flows}) Let $g_{1}$ and $g_{2}$ be the functions defined
in Lemma \ref{lem:gronwell-est}. Let $\mathcal{Q}$ denote the $\left(
u+s\right)  \times\left(  u+s\right)  $ matrix associated with $Q$, that is,
$Q(p)=p^{T}\mathcal{Q}p$, and let
\[
C=\int_{0}^{1}DF\left(  \left(  1-t\right)  p_{2}+tp_{1}\right)  dt\in\left[
DF(N)\right]  .
\]
We can compute%
\begin{align}
&  \frac{d}{dt}Q\left(  \phi_{t}(p_{1})-\phi_{t}(p_{2})\right)
\label{eq:temp-flows1}\\
&  =\left(  \phi_{t}^{\prime}(p_{1})-\phi_{t}^{\prime}(p_{2})\right)
^{T}\mathcal{Q}\left(  \phi_{t}(p_{1})-\phi_{t}(p_{2})\right) \nonumber\\
&  \quad+\left(  \phi_{t}(p_{1})-\phi_{t}(p_{2})\right)  ^{T}\mathcal{Q}%
\left(  \phi_{t}^{\prime}(p_{1})-\phi_{t}^{\prime}(p_{2})\right) \nonumber\\
&  =\left(  F(\phi_{t}(p_{1}))-F(\phi_{t}(p_{2}))\right)  ^{T}\mathcal{Q}%
\left(  \phi_{t}(p_{1})-\phi_{t}(p_{2})\right) \nonumber\\
&  \quad+\left(  \phi_{t}(p_{1})-\phi_{t}(p_{2})\right)  ^{T}\mathcal{Q}%
\left(  F(\phi_{t}(p_{1}))-F(\phi_{t}(p_{2}))\right) \nonumber\\
&  =\left(  F(p_{1})-F(p_{2})+g_{2}\left(  p_{1},p_{2},t\right)  \right)
^{T}\mathcal{Q}\left(  p_{1}-p_{2}+g_{1}\left(  p_{1},p_{2},t\right)  \right)
\nonumber\\
&  \quad+\left(  p_{1}-p_{2}+g_{1}\left(  p_{1},p_{2},t\right)  \right)
^{T}\mathcal{Q}\left(  F(p_{1})-F(p_{2})+g_{2}\left(  p_{1},p_{2},t\right)
\right) \nonumber\\
&  =\left(  p_{1}-p_{2}\right)  ^{T}\left(  C^{T}\mathcal{Q}+\mathcal{Q}%
C\right)  \left(  p_{1}-p_{2}\right)  +g_{3}(p_{1},p_{2},t),\nonumber
\end{align}
where by (\ref{eq:gronw-est-1}--\ref{eq:gronw-est-2}) we see that for any
$p_{1},p_{2}\in N$ and $\left\vert t\right\vert \leq1$
\[
\left\Vert g_{3}(p_{1},p_{2},t)\right\Vert \leq bt\left\Vert p_{1}%
-p_{2}\right\Vert ^{2},
\]
for a constant $b$ dependent on $\mu,L,M,\alpha$ and $\beta$.

Since $C\in\left[  DF(N)\right]  $, it is of the form%
\[
C=\left(
\begin{array}
[c]{cc}%
A & \varepsilon_{1}\\
\varepsilon_{2} & B
\end{array}
\right)  ,
\]
with $A\in\mathbf{A},$ $B\in\mathbf{B},$ $\varepsilon_{1}\in
\boldsymbol{\varepsilon}_{1}$ and $\varepsilon_{2}\in\boldsymbol{\varepsilon
}_{2}$. Using the fact that%
\[
x^{T}\varepsilon_{i}y\geq-\left\Vert \varepsilon_{i}\right\Vert \left\Vert
x\right\Vert \left\Vert y\right\Vert \geq-\frac{1}{2}\left\Vert \varepsilon
_{i}\right\Vert \left(  x^{T}x+y^{T}y\right)  \quad\text{for }i=1,2
\]
for $p=\left(  x,y\right)  \neq0$ we can compute%
\begin{align}
p^{T}\mathcal{Q}Cp  &  =\alpha x^{T}Ax+\alpha x^{T}\varepsilon_{1}y-\beta
y^{T}\varepsilon_{2}x-\beta y^{T}By\label{eq:temp-flows2}\\
&  \geq\alpha x^{T}Ax-\alpha\frac{1}{2}\left\Vert \varepsilon_{1}\right\Vert
\left(  x^{T}x+y^{T}y\right) \nonumber\\
&  -\beta\frac{1}{2}\left\Vert \varepsilon_{2}\right\Vert \left(  x^{T}%
x+y^{T}y\right)  -\beta y^{T}By\nonumber\\
&  =\alpha x^{T}\left(  A-\frac{1}{2}\left(  \left\Vert \varepsilon
_{1}\right\Vert +\frac{\beta}{\alpha}\left\Vert \varepsilon_{2}\right\Vert
\right)  \mathrm{Id}\right)  x\nonumber\\
&  -\beta y^{T}\left(  B+\frac{1}{2}\left(  \left\Vert \varepsilon
_{2}\right\Vert +\frac{\alpha}{\beta}\left\Vert \varepsilon_{1}\right\Vert
\right)  \mathrm{Id}\right)  y\nonumber\\
&  >\alpha c\left\Vert x\right\Vert ^{2}-\beta c\left\Vert y\right\Vert
^{2}\nonumber\\
&  =cp^{T}\mathcal{Q}p.\nonumber
\end{align}
Similarly, it follows that for $p\neq0$%
\begin{equation}
p^{T}C^{T}\mathcal{Q}p>cp^{T}\mathcal{Q}p. \label{eq:temp-flows3}%
\end{equation}

Combining (\ref{eq:temp-flows1}), (\ref{eq:temp-flows2}) and
(\ref{eq:temp-flows3}), taking $p_{1}\neq p_{2}$, for some $\xi\in\left[
-t,t\right]  $ (which depends on $p_{1},p_{2}$ and $t$),%
\begin{align*}
&  Q\left(  \phi_{t}(p_{1})-\phi_{t}(p_{2})\right) \\
&  =Q\left(  \phi_{t}(p_{1})-\phi_{t}(p_{2})\right)  |_{t=0}+t\left.  \frac
{d}{ds}Q\left(  \phi_{s}(p_{1})-\phi_{s}(p_{2})\right)  \right\vert _{s=\xi}\\
&  =Q\left(  p_{1}-p_{2}\right)  +t\left(  p_{1}-p_{2}\right)  ^{T}\left(
C^{T}\mathcal{Q}+\mathcal{Q}C\right)  \left(  p_{1}-p_{2}\right)
+tg_{3}(p_{1},p_{2},\xi)\\
&  >\left(  1+2tc\right)  Q\left(  p_{1}-p_{2}\right)  +tg_{3}(p_{1},p_{2}%
,\xi).
\end{align*}
Since
\[
\left\Vert tg_{3}(p_{1},p_{2},\xi)\right\Vert \leq bt^{2}\left\Vert
p_{1}-p_{2}\right\Vert ^{2},
\]
we see that for sufficiently small $\left\vert t\right\vert $%
\[
Q\left(  \phi_{t}(p_{1})-\phi_{t}(p_{2})\right)  \geq\left(  1+2tc\right)
Q\left(  p_{1}-p_{2}\right)  ,
\]
as required.
\end{proof}

We are now ready to prove Theorems \ref{th:odes-wu} and \ref{th:odes-ws}.

\begin{proof}
[Proof of Theorem \ref{th:odes-wu}]From Lemma \ref{lem:cc-flows} it follows
that there exists a $\tau^{\ast}$ such that for any $\tau\in\left(
0,\tau^{\ast}\right)  $ the time shift along the trajectory map $\phi_{\tau}$
satisfies cone conditions for $\left(  Q_{h},m_{h}\right)  $ and $\left(
Q_{v},m_{v}\right)  ,$ with%
\begin{align*}
m_{h} &  =m_{h}(\tau)=1+\tau2c_{h},\\
m_{v} &  =m_{v}(\tau)=1+\tau2c_{v}.
\end{align*}
We can choose $\tau^{\ast}$ small enough so that $m_{h}(\tau)>0,$ for $\tau
\in\left(  0,\tau^{\ast}\right)  $. Also, since $c_{v}>0$, we see that
$m_{v}(\tau)>1$. By Theorem \ref{th:main-unstable}, there exists a strong
unstable manifold for $\phi_{\tau}$, such that $W_{\sqrt{m_{v}(\tau)},N}%
^{u}\cap U$ is a graph of a function $w^{u,\tau}:\overline{B}_{u}%
(0,r)\rightarrow\overline{B}_{s}.$

We will now show that for $\tau_{1},\tau_{2}\in\left(  0,\tau^{\ast}\right)  $
we have $w^{u,\tau_{1}}=w^{u,\tau_{2}}$. Assume that $\tau_{1}<\tau_{2}$. Let
us fix $x\in\overline{B}_{u}(0,r)$ and define $p_{1}=w^{u,\tau_{1}}(x),$
$p_{2}=w^{u,\tau_{2}}(x)$. We will show that $p_{1}=p_{2}$. In our argument we
will use the fact that
\begin{equation}
1\leq\left(  1+\frac{b}{a}\right)  ^{\frac{a}{2}}\leq\left(  1+b\right)
^{\frac{1}{2}}\qquad\text{for }a\in(0,1],\text{ and }b>0.
\label{eq:tmp-rate-ab}%
\end{equation}
Let $n\in\mathbb{N}$ be fixed. Since $\tau_{1}<\tau_{2}$, there exists a
$k\in\mathbb{N}$, $k>n$ and $\delta\in\left[  0,\tau_{1}\right)  $, such that%
\[
n\tau_{2}=k\tau_{1}-\delta.
\]
From (\ref{eq:tmp-rate-ab}), by taking $b=2c_{v}\tau_{1}$ and $a=\frac{n}{k},$
follows that%
\[
\left(  1+2c_{v}\frac{k}{n}\tau_{1}\right)  ^{-\frac{n}{2}}\geq\left(
1+2c_{v}\tau_{1}\right)  ^{-\frac{k}{2}},
\]
which gives%
\begin{multline*}
\left(  \sqrt{m_{v}(\tau_{2})}\right)  ^{-n}=\left(  1+2c_{v}\tau_{2}\right)
^{-\frac{n}{2}}\geq\left(  1+2c_{v}\frac{k}{n}\tau_{1}\right)  ^{-\frac{n}{2}%
}\\
\geq\left(  1+2c_{v}\tau_{1}\right)  ^{-\frac{k}{2}}=\left(  \sqrt{m_{v}%
(\tau_{1})}\right)  ^{-k}.
\end{multline*}
From this estimate we see that%
\begin{multline*}
\left\Vert \left(  \phi_{\tau_{2}}\right)  ^{-n}\left(  p_{1}\right)
\right\Vert =\left\Vert \phi_{-n\tau_{2}}\left(  p_{1}\right)  \right\Vert
=\left\Vert \phi_{\delta}\circ\phi_{-k\tau_{1}}\left(  p_{1}\right)
\right\Vert \\
\leq e^{L\delta}\left\Vert \phi_{-k\tau_{1}}\left(  p_{1}\right)  \right\Vert
\leq e^{L\tau_{1}}C\sqrt{m_{v}(\tau_{1})}^{-k}\leq e^{L\tau_{1}}C\sqrt
{m_{v}(\tau_{2})}^{-n},
\end{multline*}
which means that $p_{1}$ is on the strong unstable manifold for the map
$\phi_{\tau_{2}}$, hence $p_{1}=p_{2},$ as required.

Since the strong unstable manifold for the time shift maps $\phi_{\tau}$ is
independent of the choice of $\tau$, we see that it coincides with a strong
unstable manifold for the flow $\phi_{t}$. What remains is to prove that the
expansion rate for this manifold is $c_{v}$.

For $\tau\in\left(  0,\tau^{\ast}\right)  $ the map $\phi_{\tau}$ satisfies
cone conditions for $\left(  Q_{h},m_{h}(\tau)\right)  $ and $\left(
Q_{v},m_{v}(\tau)\right)  $ (where $Q_{h}$ and $Q_{v}$ are the same for all
$\tau$), hence by Remark \ref{rem:C-bound-from-cones},%
\[
\left\Vert \left(  \phi_{\tau}\right)  ^{-n}(p)\right\Vert \leq C\sqrt
{m_{v}(\tau)}^{-n},
\]
for $C$ which is independent of $\tau$. Let $t<0$. The expansion rate
condition follows by computing\textbf{ }%
\begin{align}
\left\Vert \phi_{t}(p)\right\Vert  &  =\left\Vert \left(  \phi_{\frac
{\left\vert t\right\vert }{n}}\right)  ^{-n}(p)\right\Vert \nonumber\\
&  \leq C\left(  \sqrt{m_{v}\left(  \left\vert t\right\vert /n\right)
}\right)  ^{-n}\label{eq:flow-expansion}\\
&  =C\left(  1+\frac{2}{n}\left\vert t\right\vert c_{v}\right)  ^{-\frac{n}%
{2}}\overset{n\rightarrow+\infty}{\rightarrow}Ce^{tc_{v}},\nonumber
\end{align}
as required.
\end{proof}

\begin{proof}
[Proof of Theorem \ref{th:odes-ws}]The result follows from combining Lemma
\ref{lem:cc-flows} with Theorem \ref{th:main-stable}, and mirror arguments to
the proof of Theorem \ref{th:odes-wu}.
\end{proof}

%TCIDATA{OutputFilter=latex2.dll}
%TCIDATA{Version=5.00.0.2606}
%TCIDATA{LaTeXparent=0,0,online-edit.tex}

\section{Proof of a homoclinic connection in the restricted three body
problem\label{sec:3bp-application}}

\subsection{A suitable change of coordinates}

To verify assumptions of Theorem \ref{th:odes-wu} close to $L_{1}$ we consider
the PCR3BP in suitable local coordinates. These are introduced below in two
steps. The first step takes the linearized vector field into a Jordan form,
through a linear change of coordinates. The second step involves a nonlinear
change of coordinates, which further \textquotedblleft straightens out" the
unstable coordinate.

We now discuss the linear change of coordinates. The libration point is of the
form%
\[
L_{1}^{\mu}=\left(  x_{L_{1}}^{\mu},0,0,x_{L_{1}}^{\mu}\right)  .
\]
The Jacobian of the vector field has an unstable eigenvalue, which we denote
as $\lambda.$ We consider the following linear change of coordinates
(\cite{Jorba}, Section~2.1)
\begin{equation}
C^{\mu}=\left(
\begin{array}
[c]{cccc}%
\frac{2\lambda}{s_{1}} & \frac{-2\lambda}{s_{1}} & 0 & \frac{2v}{s_{2}}\\
\frac{\lambda^{2}-2c_{2}-1}{s_{1}} & \frac{\lambda^{2}-2c_{2}-1}{s_{1}} &
\frac{-v^{2}-2c_{2}-1}{s_{2}} & 0\\
\frac{\lambda^{2}+2c_{2}+1}{s_{1}} & \frac{\lambda^{2}+2c_{2}+1}{s_{1}} &
\frac{-v^{2}+2c_{2}+1}{s_{2}} & 0\\
\frac{\lambda^{3}+(1-2c_{2})\lambda}{s_{1}} & \frac{-\lambda^{3}%
-(1-2c_{2})\lambda}{s_{1}} & 0 & \frac{-v^{3}+(1-2c_{2})v}{s_{2}}%
\end{array}
\right)  \label{eq:C-mu}%
\end{equation}
where%
\begin{align*}
c_{2}  &  =\frac{1}{\gamma^{3}}\left(  \mu+\frac{(1-\mu)\gamma^{3}}%
{(1-\gamma)^{3}}\right)  ,\\
\gamma &  =x_{L_{1}}^{\mu}+1-\mu,\\
s_{1}  &  =\sqrt{2\lambda\left(  \left(  4+3c_{2}\right)  \lambda^{2}%
+4+5c_{2}-6c_{2}^{2}\right)  },\\
s_{2}  &  =\sqrt{v\left(  \left(  4+3c_{2}\right)  v^{2}-4-5c_{2}+6c_{2}%
^{2}\right)  },
\end{align*}
that puts the linear terms of the vector field at $L_{1}^{\mu}$ into the
Jordan form%
\[
\left(
\begin{array}
[c]{cccc}%
\lambda & 0 & 0 & 0\\
0 & -\lambda & 0 & 0\\
0 & 0 & 0 & v\\
0 & 0 & -v & 0
\end{array}
\right)  .
\]
We note that in the above, for sake of keeping the notations short, we have
omitted the dependence of parameters on $\mu.$ In fact, for different $\mu,$
each nonzero entry of $C^{\mu}$ is different.

Using the notation $\mathbf{x}=\left(  X,Y,P_{X},P_{Y}\right)  $ for the
original coordinates of the problem, we introduce local coordinates
$\mathbf{v}$ at $L_{1}^{\mu}$ as%

\[
\mathbf{x}=L_{1}^{\mu}+C^{\mu}\mathbf{v.}%
\]
In coordinates $\mathbf{v}$, the vector field is%
\[
\tilde{F}(\mathbf{v})=\left(  C^{\mu}\right)  ^{-1}F\left(  L_{1}^{\mu}%
+C^{\mu}\mathbf{v}\right)  ,
\]
and the Jacobian of the vector field at zero is%
\[
D\tilde{F}(0)=\text{diag}\left(  A_{h},A_{c}\right)  ,
\]
with
\[
A_{h}=%
\begin{pmatrix}
\lambda & 0\\
0 & -\lambda
\end{pmatrix}
\qquad\text{and }\qquad A_{c}=\left(
\begin{array}
[c]{cc}%
0 & v\\
-v & 0
\end{array}
\right)  .
\]
The matrix $A_{h}$ represents the linearized hyperbolic dynamics, and $A_{c}$
represents the center rotation at the fixed point.

The second step is to consider a nonlinear change of coordinates. To do so let
us consider an equation
\begin{equation}
\tilde{F}(K(\mathsf{x}))=R(\mathsf{x})\,DK(\mathsf{x}), \label{eq:cohomology}%
\end{equation}
where $K:\mathbb{R}\rightarrow\mathbb{R}^{4}$ and $R:\mathbb{R}\rightarrow
\mathbb{R}$ are analytic. We refer to (\ref{eq:cohomology}) as the
\emph{cohomology equation}. The graph of $K$ parametrizes the unstable
manifold at the fixed point. An approximate solution of $K$ and $R$ can be
found numerically (for details see \cite{Llave}). We use a polynomial $K$,
which is an approximate, numerically obtained solution of (\ref{eq:cohomology}%
), and use it to define the following nonlinear change of coordinates%
\[
\psi=\left(  \psi_{0},\psi_{1},\psi_{2},\psi_{3}\right)  :\mathbb{R}%
^{4}\rightarrow\mathbb{R}^{4},
\]
where%
\begin{align}
\psi_{0}\left(  \mathsf{x},\mathsf{y}_{1},\mathsf{y}_{2},\mathsf{y}%
_{3}\right)   &  =K_{0}(\mathsf{x})-\left(  \mathsf{y}_{1}K_{1}^{\prime
}(\mathsf{x})+\mathsf{y}_{2}K_{2}^{\prime}(\mathsf{x})+\mathsf{y}_{3}%
K_{3}^{\prime}(\mathsf{x})\right)  ,\label{eq:psi-form}\\
\psi_{i}\left(  \mathsf{x},\mathsf{y}_{1},\mathsf{y}_{2},\mathsf{y}%
_{3}\right)   &  =K_{i}(\mathsf{x})+\mathsf{y}_{i}K_{0}^{\prime}%
(\mathsf{x})\quad\text{for }i=1,2,3.\nonumber
\end{align}
Note that since the graph of $K$ approximates the unstable manifold,
$\psi(\mathsf{x},0)=K(\mathsf{x})$ gives points close to the unstable manifold
of the fixed point. The intuitive idea behind (\ref{eq:psi-form}) is to
arrange the coordinates so that $\psi\left(  \mathsf{x},\mathsf{y}%
_{1},\mathsf{y}_{2},\mathsf{y}_{3}\right)  -K(\mathsf{x})$ is orthogonal to
$K^{\prime}(\mathsf{x})$ (see Figure \ref{fig:psi-coord}).

\begin{figure}[ptb]
\begin{center}
\includegraphics[height=2.5cm]{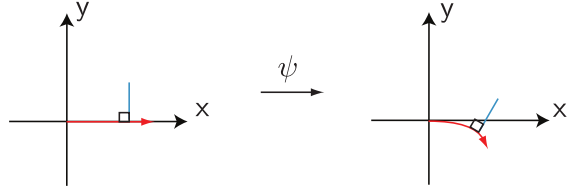}
\end{center}
\caption{The unstable manifold in coordinates $\mathbf{q=}(\mathsf{x}%
,\mathsf{y}_{1},\mathsf{y}_{2},\mathsf{y}_{3})$ (left), and in coordinates
$\mathbf{v}$ (right).}%
\label{fig:psi-coord}%
\end{figure}

Combining the linear and nonlinear changes of coordinates gives the total
change $\Phi:$ $\mathbb{R}^{4}\rightarrow\mathbb{R}^{4}$ from coordinates
$\mathbf{q}=\left(  \mathsf{x},\mathsf{y}_{1},\mathsf{y}_{2},\mathsf{y}%
_{3}\right)  ,$ defined as%
\begin{equation}
\mathbf{x}=\Phi^{\mu}(\mathbf{q}):=C^{\mu}\psi\left(  \mathbf{q}\right)  .
\label{eq:total-change}%
\end{equation}
The vector field in coordinates $\mathbf{q}$ is%
\begin{equation}
\hat{F}(\mathbf{q})=D\left(  \Phi^{\mu}\right)  ^{-1}\left(  \Phi^{\mu
}(\mathbf{q})\right)  F\left(  \Phi^{\mu}(\mathbf{q})\right)  .
\label{eq:Flocal}%
\end{equation}

\begin{remark}
In our application, the nonlinear change of coordinates is not strictly
necessary. Even without it we can obtain our result, but with smaller
accuracy. We decided to add the nonlinear change in order to demonstrate that
such techniques are possible. Also, with a nonlinear change of coordinates
some more careful consideration is needed when computing the derivative of the
vector field in local coordinates. This is discussed in section
\ref{sec:manifold-3bp}.
\end{remark}

\subsection{Enclosure of the unstable manifold\label{sec:manifold-3bp}}

In order to obtain an enclosure of the unstable manifold in coordinates
$\mathbf{q}$ we apply Theorem \ref{th:odes-wu} to establish the existence of
the manifold.

Let us first specify our change of coordinates $\Phi^{\mu}$ (see
(\ref{eq:total-change})). The linear part $C^{\mu}$ of $\Phi^{\mu}$ is given
by (\ref{eq:C-mu}). We consider an interval of parameters%
\[
\boldsymbol{\mu}=0.004253863522+10^{-10}\left[  -1,1\right]  ,
\]
and for any $\mu\in\boldsymbol{\mu}$ take the same nonlinear change $\psi$
(see (\ref{eq:psi-form})), with $K$ chosen as
\begin{align}
K_{0}\left(  \mathsf{x}\right)   &  =\mathsf{x},\label{eq:K-cooef}\\
K_{1}\left(  \mathsf{x}\right)   &  =-0.4426997319120566\mathsf{x}%
^{2}+0.2117307906593041\mathsf{x}^{3},\nonumber\\
K_{2}\left(  \mathsf{x}\right)   &  =0.7204702544171099\mathsf{x}%
^{2}-0.2077414984788253\mathsf{x}^{3},\nonumber\\
K_{3}\left(  \mathsf{x}\right)   &  =0.6096754412253178\mathsf{x}%
^{2}-1.6248371332133488\mathsf{x}^{3}.\nonumber
\end{align}

The first step is to obtain an enclosure of the fixed point in local
coordinates $\mathbf{q}$ (see (\ref{eq:total-change})). We do this by applying
the interval Newton method (Theorem \ref{th:interval-Newton}). In order to do
this we have to compute the derivative of the local vector field
(\ref{eq:Flocal}) as follows. Since%
\[
D\left(  \left(  \Phi^{\mu}\right)  ^{-1} \right)  \left(  \Phi^{\mu}\left(
\mathbf{q}\right)  \right)  =\left(  D\Phi^{\mu}\left(  \mathbf{q}\right)
\right)  ^{-1},
\]
we see that%
\[
D\Phi^{\mu}\left(  \mathbf{q}\right)  \hat{F}(\mathbf{q})=F\left(  \Phi^{\mu
}(\mathbf{q})\right)  .
\]
Differentiating on both sides gives%
\[
D^{2}\Phi^{\mu}\left(  \mathbf{q}\right)  \hat{F}(\mathbf{q})+D\Phi^{\mu
}\left(  \mathbf{q}\right)  D\hat{F}(\mathbf{q})=DF\left(  \Phi^{\mu
}(\mathbf{q})\right)  D\Phi^{\mu}(\mathbf{q}),
\]
hence%
\begin{equation}
D\hat{F}(\mathbf{q})=\left(  D\Phi^{\mu}\left(  \mathbf{q}\right)  \right)
^{-1}\left(  DF\left(  \Phi^{\mu}(\mathbf{q})\right)  D\Phi^{\mu}%
(\mathbf{q})-D^{2}\Phi^{\mu}\left(  \mathbf{q}\right)  \hat{F}(\mathbf{q}%
)\right)  . \label{eq:DFlocal}%
\end{equation}
The main advantage of this approach is that we do not need to invert
$\Phi^{\mu}$ to apply (\ref{eq:DFlocal}). Using (\ref{eq:DFlocal}) and Theorem
\ref{th:interval-Newton} we can establish that for all $\mu\in\boldsymbol{\mu
}$ the fixed point is in a set which we denote as $B$.

The second step is to verify assumptions of Theorem \ref{th:main-unstable}
using Lemma \ref{lem:cc-flows}. In order to do so we choose $\alpha_{h}%
,\alpha_{v}$, and take%
\begin{equation}
N=B+\left[  0,r_{u}\right]  \times\left[  -r_{u}\sqrt{\alpha_{h}},r_{u}%
\sqrt{\alpha_{h}}\right]  ^{3}. \label{eq:N-set}%
\end{equation}
To obtain a good enclosure we subdivide the set $N$, and compute the
derivative on smaller subsets. Numerical results are listed in section
\ref{sec:comp-assisted}. The unstable manifold expressed in local coordinates
$\mathbf{q}$ passes through (see Figure \ref{fig:u3bp})
\begin{equation}
U=B+\{\sqrt{1-\alpha_{v}}\}\times\left[  -\sqrt{\alpha_{h}},\sqrt{\alpha_{h}%
}\right]  ^{3}. \label{eq:U-set}%
\end{equation}

\begin{figure}[ptb]
\begin{center}
\includegraphics[height=3.5cm]{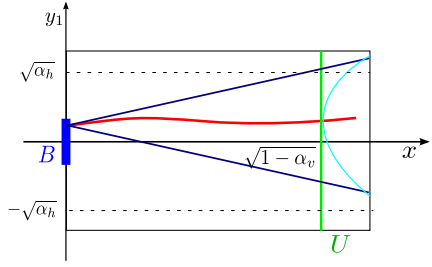}
\end{center}
\caption{The unstable manifold expressed in local coordinates (in red) passes
through a box $U$ (in green). The enclosure $B$ of a fixed point is a small
blue box around $0$.}%
\label{fig:u3bp}%
\end{figure}

\subsection{Proof of existence of a homoclinic connection}

Let $U_{\mu}$ be a set which contains a point on the unstable manifold of
$L_{1}^{\mu}.$ Assume that for any $\mu\in\boldsymbol{\mu}$ the Poincar\'{e}
map%
\begin{align}
P_{\mu} &  :U_{\mu}\rightarrow\{y=0\},\nonumber\\
P_{\mu}(\mathbf{x}) &  :=\phi_{\tau(\mathbf{x})}(\mathbf{x}%
),\label{eq:Pmu-def}%
\end{align}
where%
\[
\tau\left(  \mathbf{x}\right)  =\inf\left\{  t|t>0,\phi_{t}(\mathbf{x}%
)\in\{y=0\}\right\}  ,
\]
is well defined.

\begin{lemma}
Assume that $\boldsymbol{\mu}=\left[  \mu_{\mathrm{left}},\mu_{\mathrm{right}%
}\right]  .$If
\begin{align}
\pi_{p_{x}}P_{\mu_{\mathrm{left}}}(\mathbf{x})  &  <0,\quad\text{for any
}\mathbf{x}\in U_{\mu_{\mathrm{left}}},\label{eq:mu-cond-1}\\
\pi_{p_{x}}P_{\mu_{\mathrm{right}}}(\mathbf{x})  &  >0,\quad\text{for any
}\mathbf{x}\in U_{\mu_{\mathrm{right}}}, \label{eq:mu-cond-2}%
\end{align}
then there exists a $\mu\in\left(  \mu_{\mathrm{left}},\mu_{\mathrm{right}%
}\right)  $, for which we have a homoclinic orbit to $L_{1}^{\mu}.$
\end{lemma}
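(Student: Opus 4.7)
The plan is to combine the intermediate value theorem, applied to the $P_X$-coordinate of the Poincar\'e image of a trajectory on the unstable manifold, with the reversing symmetry $S$ from (\ref{eq:S-sym}). The key observation is that a point on the Poincar\'e section $\{Y=0\}$ that additionally satisfies $P_X=0$ is fixed by $S$, since $S(X,Y,P_X,P_Y)=(X,-Y,-P_X,P_Y)$. By the property (\ref{eq:S-sym-property}), $S$ interchanges the unstable and stable manifolds of $L_1^\mu$, so any $S$-fixed point lying on $W^u(L_1^\mu)$ must automatically lie on $W^s(L_1^\mu)$, yielding a homoclinic orbit.

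Concretely, first I would pick, for each $\mu\in\boldsymbol{\mu}$, a point $\gamma(\mu)\in U_\mu\cap W^u(L_1^\mu)$, whose existence is part of the standing hypothesis. The map $\mu\mapsto\gamma(\mu)$ can be chosen continuous: the location of $L_1^\mu$ depends continuously (in fact analytically) on $\mu$, and the unstable manifold obtained in Theorem \ref{th:odes-wu} depends continuously on the vector field, hence on $\mu$. Then define
\[
g(\mu) := \pi_{P_X}P_\mu(\gamma(\mu)),
\]
which is continuous in $\mu$ by continuity of $\gamma$ and of the Poincar\'e map $P_\mu$. Hypothesis (\ref{eq:mu-cond-1}) applied to $\mathbf{x}=\gamma(\mu_{\mathrm{left}})\in U_{\mu_{\mathrm{left}}}$ gives $g(\mu_{\mathrm{left}})<0$, and (\ref{eq:mu-cond-2}) analogously gives $g(\mu_{\mathrm{right}})>0$. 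The intermediate value theorem then produces $\mu^\ast\in(\mu_{\mathrm{left}},\mu_{\mathrm{right}})$ with $g(\mu^\ast)=0$.

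Next I would set $\mathbf{x}^\ast := P_{\mu^\ast}(\gamma(\mu^\ast))$. By construction $\pi_Y\mathbf{x}^\ast=0$ (Poincar\'e section) and $\pi_{P_X}\mathbf{x}^\ast=0$, hence $\mathbf{x}^\ast=(X^\ast,0,0,P_Y^\ast)$, so $S(\mathbf{x}^\ast)=\mathbf{x}^\ast$. The point $\mathbf{x}^\ast$ is a forward-time image of $\gamma(\mu^\ast)\in W^u(L_1^{\mu^\ast})$, hence $\mathbf{x}^\ast\in W^u(L_1^{\mu^\ast})$. Applying (\ref{eq:S-sym-property}) to a backward trajectory of $\mathbf{x}^\ast$ on $W^u$ produces a forward trajectory of $S(\mathbf{x}^\ast)=\mathbf{x}^\ast$ converging to $S(L_1^{\mu^\ast})=L_1^{\mu^\ast}$, so $\mathbf{x}^\ast\in W^s(L_1^{\mu^\ast})$. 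The orbit through $\mathbf{x}^\ast$ is therefore homoclinic to $L_1^{\mu^\ast}$.

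I expect the main obstacle to be the continuity and well-posedness of $\gamma(\mu)$ and $P_\mu(\gamma(\mu))$ on the whole interval $\boldsymbol{\mu}$: one has to ensure that for every intermediate $\mu$ the relevant arc of $W^u$ stays in the domain of the Poincar\'e map and that the first-return time $\tau$ is uniformly bounded and continuous. In the present application this is not a serious difficulty because $\boldsymbol{\mu}$ is a very small interval around $\mu_2^\ast$ and the rigorous integrator used to verify (\ref{eq:mu-cond-1})--(\ref{eq:mu-cond-2}) already produces, in a single run, an enclosure of $P_\mu(U_\mu)$ valid for all $\mu\in\boldsymbol{\mu}$, from which continuous dependence on $\mu$ follows automatically.
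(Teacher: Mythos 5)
Your proposal is correct and follows essentially the same route as the paper: apply the intermediate value theorem to $g(\mu)=\pi_{P_X}P_\mu(\gamma(\mu))$, using the given sign conditions at the endpoints, and then invoke the reversing symmetry $S$ to conclude that the resulting $S$-fixed point on $W^u(L_1^\mu)$ also lies on $W^s(L_1^\mu)$. Your explicit remark about choosing $\gamma(\mu)$ continuously and about the domain of $P_\mu$ addresses a point the paper passes over quickly, but it is a refinement of the same argument rather than a different one.
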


\begin{proof}
Let $\mathbf{x}_{\mu}$ be any point from the intersection of $U_{\mu}$ with
$W^{u}(L_{1}^{\mu}).$ If
\begin{equation}
P_{\mu}(\mathbf{x}_{\mu})=\left(  x,0,0,p_{y}\right)  , \label{eq:Pmu-form}%
\end{equation}
for some $x,p_{y}$ (which depend on $\mu$), then the point $P_{\mu}%
(\mathbf{x}_{\mu})$ is $S$-symmetric (see (\ref{eq:S-sym})), and by
(\ref{eq:S-sym-property})
\[
S(\phi_{t}(P_{\mu}(\mathbf{x}_{\mu})))=\phi_{-t}(S(P_{\mu}(\mathbf{x}_{\mu
})))=\phi_{-t}(P_{\mu}(\mathbf{x}_{\mu}))\overset{t\rightarrow+\infty
}{\rightarrow}L_{1}^{\mu}.
\]
This means that $P_{\mu}(\mathbf{x}_{\mu})$ lies on a homoclinic orbit to
$L_{1}^{\mu}.$

We need to prove that there exists a parameter $\mu\in\boldsymbol{\mu},$ for
which $P_{\mu}(\mathbf{x}_{\mu})$ would be of the form (\ref{eq:Pmu-form}). By
definition of $P_{\mu}$ (\ref{eq:Pmu-def}), we know that $\pi_{y}P_{\mu
}(\mathbf{x}_{\mu})=0.$ It is therefore sufficient to show that for some
$\mu\in\left(  \mu_{\mathrm{left}},\mu_{\mathrm{right}}\right)  $
\[
\pi_{p_{x}}P_{\mu}(\mathbf{x}_{\mu})=0.
\]
Let
\[
g:\left[  \mu_{\mathrm{left}},\mu_{\mathrm{right}}\right]  \rightarrow
\mathbb{R}%
\]
be defined as%
\[
g(\mu)=\pi_{p_{x}}P_{\mu}(\mathbf{x}_{\mu}).
\]
By (\ref{eq:mu-cond-1}--\ref{eq:mu-cond-2}) we see that $g(\mu_{\mathrm{left}%
})<0<g(\mu_{\mathrm{right}})$. By continuity of the flow with respect to the
parameters of the vector field, we know that $g$ is continuous, hence
existence of $\mu$ for which $g(\mu)=0$ follows from the Bolzano theorem.
\end{proof}

For a given $\mu\in\boldsymbol{\mu}$ we can obtain the enclosure $U_{\mu}$
using the method described in section \ref{sec:manifold-3bp}. In fact, the
method can be applied not only for a single parameter $\mu\in\boldsymbol{\mu}%
$, but for an interval of parameters. Conditions (\ref{eq:mu-cond-1}%
--\ref{eq:mu-cond-2}) can be verified by integrating the system numerically,
using a rigorous, interval arithmetic based integrator. Such tool is available
as a part of the CAPD\footnote{Computer Assisted Proofs in Dynamics
http://capd.ii.uj.edu.pl} library. The package can compute Poincar\'{e} maps
$P_{\mu}$ on prescribed parameter intervals. As the Poincar\'{e} map is
computed, at the same time it is verified that it is well defined.

\subsection{Computer assisted bounds\label{sec:comp-assisted}}

Let us first take
\[
\mu_{\mathrm{left}}=0.004253863522-10^{-10}.
\]
In local coordinates $\mathbf{q}$, the enclosure $B$ for the fixed point is%
\[
B=10^{-15}\left(
\begin{array}
[c]{c}%
\lbrack-1.137,1.169]\\
\lbrack-0.426,0.394]\\
\lbrack-0.181,0.181]\\
\lbrack-0.180,0.308]
\end{array}
\right)  .
\]
For the enclosure $N$ of the unstable manifold (\ref{eq:N-set}) in coordinates
$\mathbf{q}$ we take
\[
\alpha_{h}=10^{-8},\qquad\alpha_{v}=10^{-4},\qquad r_{u}=10^{-7}.
\]
The enclosure (displayed with rough rounding, which ensures true enclosure) of
the derivative of the vector field in local coordinates is%
\begin{align*}
&  \left[  D\hat{F}(N)\right]  =\\
&  \left(
\begin{array}
[c]{cccc}%
\text{{\footnotesize [2.80038,2.80039]}} & \text{{\footnotesize 10}%
}^{\text{{\footnotesize -6}}}\text{{\footnotesize [-0.0065,1.281]}} &
\text{{\footnotesize 10}}^{\text{{\footnotesize -9}}}%
\text{{\footnotesize [-1.469,1.468]}} & \text{{\footnotesize 10}%
}^{\text{{\footnotesize -7}}}\text{{\footnotesize [-6.752,0.032]}}\\
\text{{\footnotesize 10}}^{\text{{\footnotesize -9}}}%
\text{{\footnotesize 8.521[-1,1]}} & \text{{\footnotesize [-2.80039,-2.80038]}%
} & \text{{\footnotesize 10}}^{\text{{\footnotesize -6}}}%
\text{{\footnotesize [-0.0015,1.01]}} & \text{{\footnotesize 10}%
}^{\text{{\footnotesize -7}}}\text{{\footnotesize [-5.352,0.032]}}\\
\text{{\footnotesize 10}}^{\text{{\footnotesize -9}}}%
\text{{\footnotesize 6.035[-1,1]}} & \text{{\footnotesize 10}}%
^{\text{{\footnotesize -7}}}\text{{\footnotesize [-6.752,0.0320]}} &
\text{{\footnotesize 10}}^{\text{{\footnotesize -7}}}%
\text{{\footnotesize [-2.659,0.0044]}} &
\text{{\footnotesize [2.25179,2.25180]}}\\
\text{{\footnotesize 10}}^{\text{{\footnotesize -9}}}%
\text{{\footnotesize 4.053[-1,1]}} & \text{{\footnotesize 10}}%
^{\text{{\footnotesize -9}}}\text{{\footnotesize [-1.468,1.469]}} &
\text{{\footnotesize [-2.25180,-2.25179]}} & \text{{\footnotesize 10}%
}^{\text{{\footnotesize -7}}}\text{{\footnotesize [-0.0044,2.66]}}%
\end{array}
\right)
\end{align*}
To apply Theorem \ref{th:odes-wu} we take $c_{v}=2.8$ (which looking at $[
D\hat{F}(N)]$ is clearly close to its unstable eigenvalue) and $c_{h}=1$ (here
we arbitrarily chose a number from $(0,c_{v})$), and verify conditions
(\ref{eq:ode-ass1}--\ref{eq:ode-ass4}).

The set $U$ defined in (\ref{eq:U-set}), when transported to the original
coordinates is equal to (displayed with rough rounding, which ensures true
enclosure)%
\[
U_{\mu_{\mathrm{left}}}=L_{1}^{\mu_{\mathrm{left}}}+10^{-8}\left(
\begin{array}
[c]{c}%
\lbrack4.007,4.008]\\
\lbrack-1.934,-1.931]\\
\lbrack13.15,13.16]\\
\lbrack-1.407,-1.402]
\end{array}
\right)  .
\]
This shows that we enclose the unstable manifold very close to the fixed point.

After propagating the set $U_{\mu_{_{\mathrm{left}}}}$ to the section
$\{y=0\}$ we obtain an estimate on the image by the Poincar\'{e} map%
\[
\left[  P_{\mu_{_{\mathrm{left}}}}\left(  U_{\mu_{_{\mathrm{left}}}}\right)
\right]  =\left(
\begin{array}
[c]{c}%
0.8270258829+10^{-10}\left[  -1,1\right]  \\
0\\
-10^{-8}[7.501,2.915]\\
0.9251225636+10^{-10}\left[  -1,1\right]
\end{array}
\right)  .
\]
The important result is that on the third coordinate we have values smaller
than zero, which verifies (\ref{eq:mu-cond-1}).

For
\[
\mu_{\mathrm{right}}=0.004253863522+10^{-10},
\]
up to the rounding used to present the result in this paper, the estimates on
$B$ and $[D\hat{F}(N)]$ are indistinguishable from the ones for $\mu
_{\mathrm{left}}$. The important fact though is that%
\[
\left[  P_{\mu_{_{\mathrm{right}}}}\left(  U_{\mu_{_{\mathrm{right}}}}\right)
\right]  =\left(
\begin{array}
[c]{c}%
0.82702588075+10^{-10}\left[  -1,1\right] \\
0\\
10^{-8}[2.825,7.421],\\
0.9251225623+10^{-10}\left[  -1,1\right]
\end{array}
\right)  ,
\]
is positive on the third coordinate, which ensures (\ref{eq:mu-cond-2}).

To verify that $P_{\mu}$ is well defined for all $\mu\in\boldsymbol{\mu}$,
similar computations, but with lesser accuracy, were performed.

The computer assisted proof takes 4.27 seconds, on a single core Intel i7
processor, with 1.90GHz. Majority of this time was spent on verifying that
$P_{\mu}$ is well defined for all $\mu\in\boldsymbol{\mu}$. In order to do so,
the parameter interval was subdivided into 20 fragments $\boldsymbol{\mu
}=\boldsymbol{\mu}_{1}\cup\ldots,\cup\boldsymbol{\mu}_{20}$, and each time we
needed to integrate from $U_{\boldsymbol{\mu}_{i}}$ to the section $\{y=0\},$
for $i=1,\ldots,20,$ which was time consuming.

%TCIDATA{OutputFilter=latex2.dll}
%TCIDATA{Version=5.00.0.2606}
%TCIDATA{LaTeXparent=0,0,online-edit.tex}

\section{Closing remarks\label{sec:closing-remarks}}

The paper presents a new method for establishing of strong (un)stable manifolds for fixed points. The method can be applied for computer assisted
proofs. We have shown an application of our method in the context of the
planar circular restricted three body problem, proving that there exists a
homoclinic orbit to the libration point $L_{1}$ for a suitably chosen mass
parameter. Our method produced a tight enclosure of the manifold and also a
tight enclosure for the mass parameter for which the manifold leads to a
homoclinic connection.

\section{Acknowledgements\label{sec:acknowledgements}}

We would like to thank Piotr Zgliczy\'nski for reading our work and for very helpful comments and corrections. We also thank Rafael de la Llave for discussions concerning
the application of the parameterization method. We thank
him also for his \texttt{C} code, that we have used to obtain the suitable change of
coordinates in the application of our method for the restricted three body
problem. All the computer assisted proofs presented in this paper have been
performed using the \texttt{CAPD}\footnote{http://capd.ii.uj.edu.pl} library.
We thank Tomasz Kapela and Daniel Wilczak for
discussions concerning the use of the library. Finally, we thank the anonymous referee for very helpful comments, which have enabled us to improve the paper.

%TCIDATA{OutputFilter=latex2.dll}
%TCIDATA{Version=5.00.0.2606}
%TCIDATA{LaTeXparent=0,0,online-edit.tex}

\section{Appendix\label{sec:appendix}}

\subsection{Proof of Lemma \ref{lem:cc-pos-def}\label{app:cc-pos-def}}

\begin{proof}
For any $q^{\ast},q^{\ast\ast}\in N$%
\[
\frac{d}{dt}f(\left(  1-t\right)  q^{\ast\ast}+tq^{\ast})=Df\left(  \left(
1-t\right)  q^{\ast\ast}+tq^{\ast}\right)  \left(  q^{\ast}-q^{\ast\ast
}\right)  ,
\]
hence%
\[
f(q^{\ast})-f\left(  q^{\ast\ast}\right)  =\int_{0}^{1}Df\left(  \left(
1-t\right)  q^{\ast\ast}+tq^{\ast}\right)  dt\left(  q^{\ast}-q^{\ast\ast
}\right)  .
\]
Since%
\[
B=\int_{0}^{1}Df\left(  \left(  1-t\right)  q^{\ast\ast}+tq^{\ast}\right)
dt\in\left[  Df\left(  N\right)  \right]  ,
\]
we see that for $q^{\ast}\neq q^{\ast\ast}$
\[
Q\left(  f(q^{\ast})-f\left(  q^{\ast\ast}\right)  \right)  -mQ\left(
q^{\ast}-q^{\ast\ast}\right)  =Q\left(  B\left(  q^{\ast}-q^{\ast\ast}\right)
\right)  -mQ\left(  q^{\ast}-q^{\ast\ast}\right)  >0,
\]
as required.
\end{proof}

\subsection{Proof of Remark \ref{rem:cones-setup}\label{app:cones-setup}}

\begin{proof}
Here we prove point \ref{rem:cones-setup1} of Remark \ref{rem:cones-setup}. If
$\left\Vert x\right\Vert \leq1$ and $Q_{h}\left(  x,y\right)  \geq\alpha
_{h}-1$, then
\[
\alpha_{h}-\left\Vert y\right\Vert ^{2}\geq\alpha_{h}\left\Vert x\right\Vert
^{2}-\left\Vert y\right\Vert ^{2}=Q_{h}\left(  x,y\right)  \geq\alpha_{h}-1,
\]
hence $\left\Vert y\right\Vert \leq1.$

Now we prove point \ref{rem:cones-setup2} of Remark \ref{rem:cones-setup}. If
$\left\Vert y\right\Vert \leq1$ and $Q_{v}\left(  x,y\right)  \leq1-\alpha
_{v}$, then
\[
\left\Vert x\right\Vert ^{2}-\alpha_{v}\leq\left\Vert x\right\Vert ^{2}%
-\alpha_{v}\left\Vert y\right\Vert ^{2}=Q_{v}\left(  x,y\right)  \leq
1-\alpha_{v},
\]
hence $\left\Vert x\right\Vert \leq1.$

To prove point \ref{rem:cones-setup2a} of Remark \ref{rem:cones-setup}, assume
that $Q_{h}\left(  x,y\right)  \geq\alpha_{h}-1$ and $Q_{v}\left(  x,y\right)
\leq1-\alpha_{v}$. If $||x||\leq1$ or $||y||\leq1$ then from points
\ref{rem:cones-setup1}. and \ref{rem:cones-setup2}. of the remark, we get
$(x,y)\in N$. Suppose that $||x||>1$ and $||y||>1$. From the assumptions,
\[
\left\{
\begin{array}
[c]{l}%
\alpha_{h}\left\Vert x\right\Vert ^{2}-\left\Vert y\right\Vert ^{2}\geq
\alpha_{h}-1,\\
\left\Vert x\right\Vert ^{2}-\alpha_{v}\left\Vert y\right\Vert ^{2}%
\leq1-\alpha_{v}.
\end{array}
\right.
\]
Thus, rearranging the above inequalities gives
\[
\left\{
\begin{array}
[c]{l}%
\frac{||x||^{2}-1}{||y||^{2}-1}\geq\frac{1}{\alpha_{h}}>1,\\
\frac{||x||^{2}-1}{||y||^{2}-1}\leq\alpha_{v}<1,
\end{array}
\right.
\]
which is a contradiction. This implies that $(x,y)\in N$.

Here we prove point \ref{rem:cones-setup3} of Remark \ref{rem:cones-setup}. If
$\left\Vert y\right\Vert \leq a,$ then%
\[
Q_{h}\left(  x,y\right)  =\alpha_{h}\left\Vert x\right\Vert ^{2}-\left\Vert
y\right\Vert ^{2}\geq\alpha_{h}\left\Vert x\right\Vert ^{2}-a^{2}\geq-a^{2},
\]
as required.
\end{proof}

\subsection{Proof of Lemma \ref{lem:positive-cone-back-conv}%
\label{app:positive-cone-back-conv}}

\begin{proof}
Let us write $q_{k}=\left(  x_{k},y_{k}\right)  $. Since $Q_{h}(x_{k}%
,y_{k})\geq0,$%
\begin{equation}
\left\Vert x_{k}\right\Vert ^{2}\geq\alpha_{h}\left\Vert x_{k}\right\Vert
^{2}\geq\left\Vert y_{k}\right\Vert ^{2}. \label{eq:tmp-Qh-slope}%
\end{equation}

Let us observe that\textbf{ }%
\[
Q_{v}(x_{k},y_{k})=\left\Vert x_{k}\right\Vert ^{2}-\alpha_{v}\left\Vert
y_{k}\right\Vert ^{2}\geq\alpha_{h}\left\Vert x_{k}\right\Vert ^{2}-\left\Vert
y_{k}\right\Vert ^{2}=Q_{h}(x_{k},y_{k})\geq0,
\]
which implies that $q_{k}\in\left\{  Q_{v}\geq0\right\}  $. From $(Q_{v}%
,m_{v})$ cone conditions follows that for $k\leq0,$
\[
Q_{v}\left(  q_{k}\right)  =Q_{v}\left(  f\left(  q_{k-1}\right)
-f(0)\right)  \geq m_{v}Q_{v}(q_{k-1}-0)=m_{v}Q_{v}(q_{k-1})\geq0.
\]
This implies that for any $k\leq0$,
\begin{equation}
Q_{v}\left(  q_{0}\right)  \geq m_{v}^{|k|}Q_{v}(q_{k}).
\label{eq:Qv-contraction-tmp}%
\end{equation}
Since $\alpha_{h},\alpha_{v}\in\left(  0,1\right)  $, by
(\ref{eq:tmp-Qh-slope}),%
\[
Q_{v}(q_{k})=\left\Vert x_{k}\right\Vert ^{2}-\alpha_{v}\left\Vert
y_{k}\right\Vert ^{2}\geq\left(  1-\alpha_{v}\alpha_{h}\right)  \left\Vert
x_{k}\right\Vert ^{2}\geq\left(  1-\alpha_{v}\alpha_{h}\right)  \left\Vert
y_{k}\right\Vert ^{2},
\]
hence from (\ref{eq:Qv-contraction-tmp}), for any $k\leq0$,
\begin{equation}
\left\Vert x_{k}\right\Vert ^{2}+\left\Vert y_{k}\right\Vert ^{2}\leq2\left(
1-\alpha_{v}\alpha_{h}\right)  ^{-1}Q_{v}(q_{k})\leq2\left(  1-\alpha
_{v}\alpha_{h}\right)  ^{-1}m_{v}^{k}Q_{v}\left(  q_{0}\right)  .
\label{eq:expansion-total-est}%
\end{equation}
Since $Q_{v}\left(  q_{0}\right)  =\left\Vert x_{0}\right\Vert ^{2}-\alpha
_{v}\left\Vert y_{0}\right\Vert ^{2}\leq\left\Vert x_{0}\right\Vert ^{2}%
\leq1,$ (\ref{eq:expansion-total-est}) gives%
\[
\left\Vert q_{k}\right\Vert =\sqrt{\left\Vert x_{k}\right\Vert ^{2}+\left\Vert
y_{k}\right\Vert ^{2}}\leq\sqrt{2\left(  1-\alpha_{v}\alpha_{h}\right)  ^{-1}%
}\sqrt{m_{v}}^{k},
\]
as required.
\end{proof}

\subsection{Proof of Lemma \ref{lem:negative-cone-forward-conv}%
\label{app:negative-cone-forward-conv}}

\begin{proof}
The proof follows along the same lines as the proof of Lemma
\ref{lem:positive-cone-back-conv}.

Let us use the notation $\left(  x_{k},y_{k}\right)  =f^{k}(q_{0})$. Since
$\left(  x_{k},y_{k}\right)  \in\{Q_{v}\leq0\}$%
\begin{equation}
\left\Vert x_{k}\right\Vert ^{2}\leq\alpha_{v}\left\Vert y_{k}\right\Vert
^{2}\leq\left\Vert y_{k}\right\Vert ^{2}. \label{eq:tmp-cone-contraction}%
\end{equation}

Let us observe that
\begin{multline*}
Q_{h}(x,y)=\alpha_{h}\left\Vert x\right\Vert ^{2}-\left\Vert y\right\Vert
^{2}\leq\alpha_{h}\left(  \left\Vert x\right\Vert ^{2}-\left\Vert y\right\Vert
^{2}\right) \\
\leq\alpha_{h}\left(  \left\Vert x\right\Vert ^{2}-\alpha_{v}\left\Vert
y\right\Vert ^{2}\right)  =\alpha_{h}Q_{v}(x,y)\leq0,
\end{multline*}
which implies that $f^{k}(q_{0})\in\{Q_{h}\leq0\}$. From $(Q_{h},m_{h})$ cone
conditions follows that for $k\geq0,$%
\begin{equation}
0\geq Q_{h}\left(  f^{k}(q_{0})\right)  \geq m_{h}^{k}Q_{h}\left(
q_{0}\right)  . \label{eq:tmp-cone-contraction-2}%
\end{equation}
Since $\alpha_{h},\alpha_{v}\in\left(  0,1\right)  $, by
(\ref{eq:tmp-cone-contraction}) and (\ref{eq:tmp-cone-contraction-2}),%
\begin{align}
\left(  1-\alpha_{h}\alpha_{v}\right)  \left\Vert x_{k}\right\Vert ^{2}  &
\leq\left(  1-\alpha_{h}\alpha_{v}\right)  \left\Vert y_{k}\right\Vert
^{2}\label{eq:tmp-cone-contraction-3}\\
&  \leq\left\Vert y_{k}\right\Vert ^{2}-\alpha_{h}\left\Vert x_{k}\right\Vert
^{2}=-Q_{h}(f^{k}(q_{0}))\leq m_{h}^{k}\left\vert Q_{h}\left(  q_{0}\right)
\right\vert .\nonumber
\end{align}
Since
\begin{equation}
\left\vert Q_{h}\left(  q_{0}\right)  \right\vert =\left\Vert y_{0}\right\Vert
^{2}-\alpha_{h}\left\Vert x_{0}\right\Vert ^{2}\leq\left\Vert y_{0}\right\Vert
^{2}\leq1, \label{eq:tmp-cone-contraction-4}%
\end{equation}
combining (\ref{eq:tmp-cone-contraction-3}) and
(\ref{eq:tmp-cone-contraction-4}),%
\[
\left\Vert f^{k}(q_{0})\right\Vert ^{2}=\left\Vert x_{k}\right\Vert
^{2}+\left\Vert y_{k}\right\Vert ^{2}\leq2\left(  1-\alpha_{h}\alpha
_{v}\right)  ^{-1}m_{h}^{k}\left\vert Q_{h}\left(  q_{0}\right)  \right\vert
\leq2\left(  1-\alpha_{h}\alpha_{v}\right)  ^{-1}m_{h}^{k},
\]
as required.
\end{proof}

\subsection{Proof of Lemma \ref{lem:gronwell-est}\label{app:gronwell-est}}

The proof of Lemma \ref{lem:gronwell-est} is based on the Gronwall lemma. We
start by writing out its statement.

\begin{lemma}
\label{lem:gronwall} \cite{CL} (Gronwall lemma) If $u,v,c\geq0$ on $[0,t]$,
$c$ is differentiable, and%
\[
v\left(  t\right)  \leq c(t)+\int_{0}^{t}u(s)v(s)ds
\]
then%
\[
v(t)\leq c(0)\exp\left(  \int_{0}^{t}u(s)ds\right)  +\int_{0}^{t}c^{\prime
}(s)\left[  \exp\left(  \int_{s}^{t}u\left(  \tau\right)  d\tau\right)
\right]  ds.
\]

\end{lemma}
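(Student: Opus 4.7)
The plan is to use the classical integrating-factor trick together with integration by parts to convert the integral involving $c(s)$ into one involving $c'(s)$ plus a boundary term $c(0)\exp(\int_0^t u)$. The shape of the claimed bound (the presence of $c'(s)$ rather than $c(s)$) strongly suggests that integration by parts is the right final move.

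First I would introduce $V(t):=\int_0^t u(s)v(s)\,ds$, so that the hypothesis reads $v(t)\leq c(t)+V(t)$ and $V$ is absolutely continuous with $V'(t)=u(t)v(t)\leq u(t)c(t)+u(t)V(t)$ almost everywhere. Rearranging gives the linear differential inequality
\[
V'(t)-u(t)V(t)\leq u(t)c(t).
\]
Multiplying by the integrating factor $\mu(t):=\exp\bigl(-\int_0^t u(\tau)\,d\tau\bigr)$, this becomes $\tfrac{d}{dt}\bigl[\mu(t)V(t)\bigr]\leq \mu(t)u(t)c(t)$. Since $V(0)=0$, integrating from $0$ to $t$ and multiplying by $1/\mu(t)=\exp(\int_0^t u)$ yields
\[
V(t)\leq \int_0^t u(s)\,c(s)\exp\!\left(\int_s^t u(\tau)\,d\tau\right)ds.
\]

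Next I would perform integration by parts on the right-hand side. Set $A(s):=\exp(\int_s^t u(\tau)\,d\tau)$; then $A'(s)=-u(s)A(s)$, $A(t)=1$, $A(0)=\exp(\int_0^t u)$. Hence
\[
\int_0^t u(s)c(s)A(s)\,ds = -\int_0^t c(s)A'(s)\,ds = -c(t)+c(0)\exp\!\left(\int_0^t u(\tau)\,d\tau\right)+\int_0^t c'(s)A(s)\,ds.
\]
Combining this with $v(t)\leq c(t)+V(t)$, the $-c(t)$ cancels $c(t)$ and we obtain exactly the claimed inequality.

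The non-routine point is the regularity required to justify the manipulations: the derivation of the differential inequality for $V$ uses that $V$ is absolutely continuous (which follows from $uv$ being locally integrable, a consequence of the integral hypothesis itself), and the integration by parts uses differentiability of $c$. Both are covered by the stated hypotheses, so the remaining work is bookkeeping. An alternative, slightly cleaner route would be to define $w(t):=\mu(t)V(t)$, observe $w'\leq \mu uc$ a.e., integrate, and then substitute; this avoids stating the linear differential inequality explicitly but is equivalent.
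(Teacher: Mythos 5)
Your proof is correct. Note that the paper does not prove this lemma itself---it cites Coddington and Levinson and states it without proof---but your integrating-factor argument for $V(t)=\int_0^t u v\,ds$, followed by integration by parts on $\int_0^t u(s)c(s)\exp\bigl(\int_s^t u\bigr)ds$ using $\frac{d}{ds}\exp\bigl(\int_s^t u\bigr)=-u(s)\exp\bigl(\int_s^t u\bigr)$, is the standard derivation and every step checks out (including the use of $u\geq0$ to pass from $v\leq c+V$ to $V'=uv\leq uc+uV$, and the cancellation of the $-c(t)$ boundary term against the $c(t)$ in the original inequality).
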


We are now ready to give the proof:

\begin{proof}
[Proof of Lemma \ref{lem:gronwell-est}]We start by proving
(\ref{eq:gronw-est-1}) for $t>0$. Let us fix $p_{1}\neq p_{2}$ and consider
$v(t)=\left\Vert g_{1}\left(  p_{1},p_{2},t\right)  \right\Vert .$ Since
$g_{1}\left(  p_{1},p_{2},0\right)  =0,$
\begin{align*}
v\left(  t\right)   &  =\left\Vert \int_{0}^{t}\frac{d}{ds}g_{1}\left(
p_{1},p_{2},s\right)  ds\right\Vert \\
&  =\left\Vert \int_{0}^{t}F\left(  \phi_{s}(p_{1})\right)  -F\left(  \phi
_{s}(p_{2})\right)  ds\right\Vert \\
&  \leq\int_{0}^{t}L\left\Vert \phi_{s}(p_{1})-\phi_{s}(p_{2})-\left(
p_{1}-p_{2}\right)  \right\Vert ds+\int_{0}^{t}L\left\Vert p_{1}%
-p_{2}\right\Vert dt.
\end{align*}
Taking $c(t)=tL\left\Vert p_{1}-p_{2}\right\Vert $ and $u(t)=L$, by Lemma
\ref{lem:gronwall},%
\begin{align*}
v(t) &  \leq\int_{0}^{t}L\left\Vert p_{1}-p_{2}\right\Vert \left[  \exp\left(
\int_{s}^{t}Ld\tau\right)  \right]  ds\\
&  =L\left\Vert p_{1}-p_{2}\right\Vert \frac{1}{L}\left(  e^{tL}-1\right)  ,
\end{align*}
which concludes the proof of (\ref{eq:gronw-est-1}) for $t>0$. For negative
times, the proof follows by taking $v(t)=g\left(  p_{1},p_{2},-t\right)  $,
with $t>0$, and performing mirror computations.

We now prove (\ref{eq:gronw-est-2}) for $t>0$. We first observe that by our
assumptions (\ref{eq:vect-fld-as-1}) and (\ref{eq:vect-fld-as-2}) on the
vector field $F$ follows that for $s>0$,%
\begin{align}
\left\Vert \phi_{s}(p_{1})-\phi_{s}(p_{2})\right\Vert  &  \leq\left\Vert
p_{1}-p_{2}\right\Vert e^{Ls},\label{eq:gronw-tmp-1}\\
\left\Vert DF\left(  \phi_{s}(p_{1})\right)  -DF\left(  \phi_{s}%
(p_{2})\right)  \right\Vert  &  \leq M\left\Vert \phi_{s}(p_{1})-\phi
_{s}(p_{2})\right\Vert ,\label{eq:gronw-tmp-2}\\
\left\Vert \left[  DF\left(  \phi_{s}(p_{1})\right)  -DF\left(  \phi_{s}%
(p_{2})\right)  \right]  F(\phi_{s}(p_{2}))\right\Vert  &  \leq\mu\left\Vert
DF\left(  \phi_{s}(p_{1})\right)  -DF\left(  \phi_{s}(p_{2})\right)
\right\Vert .\label{eq:gronw-tmp-3}%
\end{align}
We take $v(t)=\left\Vert g_{2}\left(  p_{1},p_{2},t\right)  \right\Vert $, and
compute, (using (\ref{eq:gronw-tmp-1}--\ref{eq:gronw-tmp-3}) in the second
inequality,)
\begin{align*}
v\left(  t\right)   &  =\left\Vert \int_{0}^{t}\frac{d}{ds}\left(  F(\phi
_{s}(p_{1}))-F(\phi_{s}(p_{2}))\right)  -\left(  F(p_{1})-F(p_{2})\right)
ds\right\Vert \\
&  =\left\Vert \int_{0}^{t}DF\left(  \phi_{s}(p_{1})\right)  F\left(  \phi
_{s}(p_{1})\right)  -DF\left(  \phi_{s}(p_{2})\right)  F\left(  \phi_{s}%
(p_{2})\right)  ds\right\Vert \\
&  \leq\int_{0}^{t}\left\Vert DF\left(  \phi_{s}(p_{1})\right)  F\left(
\phi_{s}(p_{1})\right)  -DF\left(  \phi_{s}(p_{1})\right)  F\left(  \phi
_{s}(p_{2})\right)  \right\Vert ds\\
&  \quad+\int_{0}^{t}\left\Vert DF\left(  \phi_{s}(p_{1})\right)  F\left(
\phi_{s}(p_{2})\right)  -DF\left(  \phi_{s}(p_{2})\right)  F\left(  \phi
_{s}(p_{2})\right)  \right\Vert ds\\
&  \leq\int_{0}^{t}L\left\Vert F\left(  \phi_{s}(p_{1})\right)  -F\left(
\phi_{s}(p_{2})\right)  \right\Vert ds+\int_{0}^{t}\mu Me^{Ls}\left\Vert
p_{1}-p_{2}\right\Vert ds\\
&  \leq\int_{0}^{t}Lv(s)ds+\int_{0}^{t}L\left\Vert F(p_{1})-F(p_{2}%
)\right\Vert ds+\int_{0}^{t}\mu Me^{Ls}\left\Vert p_{1}-p_{2}\right\Vert ds\\
&  \leq\int_{0}^{t}Lv(s)ds+\int_{0}^{t}\left(  L^{2}+\mu Me^{Ls}\right)
\left\Vert p_{1}-p_{2}\right\Vert ds.
\end{align*}
Taking $c(t)=\int_{0}^{t}\left(  L^{2}+\mu Me^{Ls}\right)  \left\Vert
p_{1}-p_{2}\right\Vert ds$ and $u(t)=L,$ by Lemma \ref{lem:gronwall},%
\begin{align*}
v(t) &  \leq\int_{0}^{t}\left(  L^{2}+\mu Me^{Ls}\right)  \left\Vert
p_{1}-p_{2}\right\Vert \left[  \exp\left(  \int_{s}^{t}Ld\tau\right)  \right]
ds\\
&  =\left(  L\left(  e^{Lt}-1\right)  +te^{Lt}\mu M\right)  \left\Vert
p_{1}-p_{2}\right\Vert .
\end{align*}
This concludes the proof of (\ref{eq:gronw-est-2}) for $t>0$. For negative
times, we take $v(t)=\left\Vert g(p_{1},p_{2},-t)\right\Vert $, with $t>0$,
and perform mirror computations.
\end{proof}


\begin{thebibliography}{10}

\bibitem {Al}G. Alefeld, \textit{Inclusion methods for systems of nonlinear
equations - the interval Newton method and modifications}. Topics in validated
computations (Oldenburg, 1993), 7--26, Stud. Comput. Math., 5, North-Holland,
Amsterdam, 1994.

\bibitem{Arnold} V.I. Arnold, \emph{Instability of dynamical systems with several degrees of freedom}, Dokl. Akad. Nauk. SSSR (1964)
156, 9

%\bibitem{BM1} M. Berz, K. Makino, \emph{Verified integration of ODEs and flows using differential algebraic methods on high-order Taylor models.} Reliab. Comput. 4 (1998), no. 4, 361--369

%\bibitem{BM2}  M. Berz, K. Makino, \emph{Taylor models and other validated functional inclusion methods.} Int. J. Pure Appl. Math. 4 (2003), no. 4, 379--456.

\bibitem{Llave1} X. Cabr\'e, E. Fontich, R. de la Llave,
\textit{The parameterization method for invariant manifolds. I. Manifolds associated to non-resonant subspaces}. Indiana Univ. Math. J. 52 (2003), no. 2, 283--328.

\bibitem{Llave2} X. Cabr\'e, E. Fontich, R. de la Llave,
\textit{The parameterization method for invariant manifolds. II. Regularity with respect to parameters}. Indiana Univ. Math. J. 52 (2003), no. 2, 329--360.

\bibitem{Llave} X. Cabr\'e, E. Fontich, R. de la Llave,
\textit{The parameterization method for invariant manifolds. III. Overview and applications.}
 J. Differential Equations 218 (2005), no. 2, 444--€"515
 
 

\bibitem{C} M.J. Capi\'{n}ski, \emph{Covering Relations and the Existence of Topologically Normally Hyperbolic Invariant Sets}, Discrete and Continuous Dynamical Systems A. Vol. 23, N. 3, (March 2009), pp 705--725
 
\bibitem{C2}  M.J. Capi\'{n}ski, \emph{Computer Assisted Existence Proofs of Lyapunov Orbits at $L_2$ and Transversal
Intersections of Invariant Manifolds in the Jupiter–Sun PCR3BP} SIAM J. Appl. Dyn. Syst. 11 (2012), No. 4, pp. 1723--1753
 
\bibitem {CS}M.J. Capi\'{n}ski, C. Sim\'{o}, \emph{Computer assisted proof for
normally hyperbolic invariant manifolds}, Nonlinearity 25 (2012) 1997--2026

\bibitem {CZ-cc}M.J. Capi\'{n}ski, P. Zgliczy\'{n}ski, \emph{Cone Conditions and
Covering Relations for Topologically Normally Hyperbolic Invariant Manifolds},
DCDS-A, Vol 30, N 3, July 2011.

\bibitem {CZ}M.J. Capi{\'n}ski, P. Zgliczy{\'n}ski, \textit{ Transition
tori in the planar restricted elliptic three-body problem} Nonlinearity,
24:1395--1432, 2011.
 
\bibitem {CR}M.J. Capi\'{n}ski, P. Rold{\'a}n, \textit{Existence of a Center
Manifold in a Practical Domain around $L_{1}$ in the Restricted Three Body
Problem,}  SIAM J. Appl. Dyn. Syst. 11 (2012), no. 1, 285–318.

\bibitem{CL} E. Coddington, N. Levison, \textit{Theory of Ordinary Differential Equations}, McGraw-Hill, New York (1955)

\bibitem{DH1} A. Delshams, G. Huguet, \emph{Geography of resonances and Arnold diffusion in a priori unstable Hamiltonian systems}. Nonlinearity 22 (2009), no. 8, 1997–2077

\bibitem{DH2} A. Delshams, G. Huguet, \emph{A geometric mechanism of diffusion: rigorous verification in a priori unstable Hamiltonian systems}. J. Differential Equations 250 (2011), no. 5, 2601--2623


\bibitem{DLS1} A. Delshams, R. de la Llave, T.M. Seara, \emph{
A geometric approach to the existence of orbits with unbounded energy in generic periodic perturbations by a potential of generic geodesic flows of T2}.
Comm. Math. Phys. 209 (2000), no. 2, 353--392. 

\bibitem{DLS2} A. Delshams, R. de la Llave, T.M. Seara, \emph{A geometric mechanism for diffusion in Hamiltonian systems overcoming the large gap problem: heuristics and rigorous verification on a model}. Mem. Amer. Math. Soc. 179 (2006)

\bibitem{GZ1}  M. Gidea, P. Zgliczy\'nski, \emph{Covering relations for multidimensional dynamical systems I}, J. of Diff. Equations, 202(2004) 32--58 

\bibitem{GZ2}  M. Gidea, P. Zgliczy\'nski, \emph{Covering relations for multidimensional dynamical systems II}, J. of Diff. Equations 202(2004) 59--80 

\bibitem{GJM} J. Guckenheimer, T. Johnson, P. Meerkamp, \emph{Rigorous enclosures of a slow manifold.} SIAM J. Appl. Dyn. Syst. 11 (2012), no. 3, 831--863.
 
\bibitem{Jorba} A. Jorba, \emph{A methodology for the numerical computation of normal forms, centre manifolds and first integrals of Hamiltonian systems. }
	Experiment. Math. 8 (1999), no. 2, 155--195. 
	

\bibitem{Simo}J. Llibre, R. Martinez, C. Simo,\textit{Transversality of the
Invariant Manifolds Associated to the Lyapunov Family of Periodic Orbits Near
}$L_{2}$\textit{\ in the Restricted Three Body Problem}, Journal of
Differential Equations 58 (1985), 104-156.

\bibitem{SZ}  R. Szczelina, P. Zgliczy\'nski, \emph{A homoclinic orbit in a planar singular ODE - a computer assisted proof.} SIAM Journal of Applied Dynamical Systems 12 (2013), 1541--1565 
	
\bibitem{Szeb} V. Szebehely, \emph{Theory of Orbits}, Academic Presss (1967).

\bibitem{T1} D. Treschev, \emph{Evolution of slow variables in a priori unstable Hamiltonian systems}. Nonlinearity 17 (2004), no. 5, 1803--1841.

\bibitem{T2} D. Treschev, \emph{Arnold diffusion far from strong resonances in multidimensional a priori unstable Hamiltonian systems}. Nonlinearity 25 (2012), no. 9, 2717--2757

\bibitem{Zgliczyn-Wilczak} D. Wilczak, P. Zgliczy\'nski, \textit{Heteroclinic
connections between periodic orbits in planar restricted circular three-body
problem---a computer assisted proof.} Comm. Math. Phys. 234 (2003), no. 1, 37--75.

\bibitem{Zgliczyn-Wilczak2}D. Wilczak, P. Zgliczy\'nski,
\textit{Heteroclinic Connections between Periodic Orbits in Planar
Restricted Circular Three Body Problem - Part II} Comm. Math.
Phys. 259 (2005), 561-576

\bibitem {Cn-Lohner}D. Wilczak, P. Zgliczy\'nski, \textit{Cr-Lohner
algorithm}, Schedae Informaticae, Vol. 20, 9-46 (2011).

\bibitem {WZ-pendulum} D. Wilczak, P. Zgliczy\'nski, \emph{Computer Assisted Proof of the Existence of Homoclinic Tangency for the
Henon Map and for the Forced Damped Pendulum}, SIAM J. Appl. Dyn. Vol. 8 (2009), No. 4, pp. 1632–1663


\bibitem{Z}P. Zgliczy\'{n}ski, \emph{Covering relations, cone conditions
and stable manifold theorem} J. of Diff. Equations 246 (2009) 1774--1819

\end{thebibliography}
\end{document}